\newcommand\pubpri[2]{%
\ifthenelse{\equal{\version}{public}}%
{{#1}}%
{\marginpar{\scshape\small Pubpri Alert}{#2}}}
\newcommand\pubprinoalert[2]{%
\ifthenelse{\equal{\version}{public}}%
{{#1}}%
{#2}}
\newcommand\ignore[1]{}
\providecommand\wantcolor{yes}   %  
\definecolor{backgroundyellow}{cmyk}{.2,.1,.8,.2}
\definecolor{backgroundblue}{rgb}{0,0,1}
\definecolor{backgroundred}{rgb}{1,0,0}
\definecolor{backgroundmagenta}{cmyk}{0,1,0,0}
\newcommand\mysubsubsection[1]{%
		\subsubsection{\sffamily\upshape\mdseries #1}}
\newcommand\mysss{\mysubsubsection}
\newtheorem{annotation}{Annotation}[section]
\newtheorem{theorem}[annotation]{%\color{blue}
		Theorem}
\newtheorem{lemma}[annotation]{%\color{blue}
		Lemma}
\newtheorem{definition}[annotation]{%\color{magenta}
		Definition}
\newtheorem{corollary}[annotation]{%\color{blue}
		Corollary}
\newtheorem{proposition}[annotation]{%\color{cyan}
		Proposition}
\newtheorem{example}[annotation]{%\color{yellow}
		Example}
\newcommand\bexample{\begin{example}\begin{rm}}
\newcommand\eexample{\end{rm}\hfill$\Box$\end{example}}
\newtheorem{examplenobox}[annotation]{%\color{yellow}
		Example}
\newcommand\bexamplenobox{\begin{examplenobox}\begin{rm}}
\newcommand\eexamplenobox{\end{rm}\end{examplenobox}}
\newtheorem{exercise}[annotation]{%\color{yellow}
		Exercise}
\newcommand\bexercise{\begin{exercise}\begin{rm}}
\newcommand\eexercise{\end{rm}\end{exercise}}
\newtheorem{notation}[annotation]{%\color{yellow}
		Notation}
\newcommand\bnotation{\begin{notation}\begin{rm}}
\newcommand\enotation{\end{rm}\end{notation}}
\newtheorem{remark}[annotation]{%\color{yellow}
		Remark}
\newcommand\bremark{\begin{remark}%\begin{mdseries}\begin{sffamily}
\begin{upshape}}
\newcommand\eremark{\end{upshape}%\end{sffamily}\end{mdseries}
\end{remark}}
\newcommand\bdefn{\begin{definition}%\begin{mdseries}\begin{sffamily}
\begin{upshape}}
\newcommand\edefn{\end{upshape}%\end{sffamily}\end{mdseries}
\end{definition}}
\newtheorem{caveat}[annotation]{%\color{yellow}
		Caveat}
\newcommand\bcaveat{\begin{caveat}%\begin{mdseries}\begin{sffamily}
\begin{upshape}}
\newcommand\ecaveat{\end{upshape}%\end{sffamily}\end{mdseries}
\end{caveat}}
\newenvironment{caveatstar}{%\color{yellow}
\par\noindent{\scshape\bfseries
  Caveat: }\begin{rm}}{\end{rm}\newline} 
\newcommand\bcaveatstar{\begin{caveatstar}}%\begin{mdseries}\begin{sffamily}
\newcommand\ecaveatstar{\end{caveatstar}}
\newenvironment{myproof}{%
\par\noindent{\scshape
  Proof: }\begin{rm}}{\hfill$\Box$\end{rm}\newline} 
\newcommand\bmyproof{\begin{myproof}}
\newcommand\emyproof{\end{myproof}}
\newenvironment{myproofnobox}{%
\par\noindent{\scshape Proof:}\begin{rm}}{\end{rm}\hfill\newline}
\newcommand\bmyproofnobox{\begin{myproofnobox}}
\newcommand\emyproofnobox{\end{myproofnobox}}
\newenvironment{solution}{%
\par\noindent{\scshape Solution: }\begin{rm}}{\hfill$\Box$\end{rm}\newline}
\newenvironment{solutionnobox}{%
%\par\noindent{\scshape Solution:}\begin{rm}}{\end{rm}\hfill\newline}
\par\noindent{\scshape Solution: }\begin{rm}}{\end{rm}}
\newcommand\bsolution{\begin{solution}\begin{rm}}
\newcommand\esolution{\end{rm}\end{solution}}
\newcommand\bsolutionnobox{\begin{solutionnobox}\begin{rm}}
\newcommand\esolutionnobox{\end{rm}\end{solutionnobox}}
\newcommand\bthm{\begin{theorem}}
\newcommand\ethm{\end{theorem}}
\newcommand\bcor{\begin{corollary}}
\newcommand\ecor{\end{corollary}}
\newcommand\blemma{\begin{lemma}}
\newcommand\elemma{\end{lemma}}
\newcommand\bprop{\begin{proposition}}
\newcommand\eprop{\end{proposition}}
\newcommand\beqn{\begin{equation}}
\newcommand\eeqn{\end{equation}}
\newcommand\beqnstar{\begin{equation*}}
\newcommand\eeqnstar{\end{equation*}}
\newcommand\mtitle[1]%
\providecommand\finalized{no}
\newcommand\checked[1]{}}%
\newcommand\checked[1]{\marginpar{[{\ttfamily\upshape\tiny CHECKED: #1}]}}}
\newcommand\spellchecked[1]{}}%
\newcommand\spellchecked[1]{\marginpar{[{\ttfamily\upshape\tiny SPELLCHECKED: #1}]}}}
\providecommand\version{public}   %  setting default  
\newcommand\mcomment[1]{}}%
\newcommand\mcomment[1]{\marginpar{{\raggedright\sffamily\upshape\small
\begin{spacing}{0.75} #1\end{spacing}}}}}
\newcommand\fcomment[1]{}}%
\newcommand\fcomment[1]{\footnote{#1}}}
\newcommand\comment[1]{}}%
\newcommand\comment[1]{{\small #1}}}
\newcommand\fieldc{\mathbb{C}}
\newcommand{\be}{\begin{enumerate}}
\newcommand{\ee}{\end{enumerate}}
\newcommand{\beq}{\begin{equation}}
\newcommand{\eeq}{\end{equation}}
\newcommand{\beqs}{\begin{equation*}}                     
\newcommand{\eeqs}{\end{equation*}}
\newcommand{\complex}{\mathbb{C}}
\newcommand{\integers}{\mathbb{Z}}
\DeclareMathOperator{\tr}{trace}
\renewcommand\omega{\varpi}
\newcommand{\borel}{\widehat{\mathfrak{b}}}
\newcommand{\csa}{\mathfrak{h}}
\newcommand{\csaf}{\widehat{\mathfrak{h}}}
\newcommand\csahat{\widehat{\mathfrak{h}}}
\newcommand{\afw}{\widehat{W}} % the affine Weyl group
\newcommand{\digautos}{\Sigma} %dynkin diag autos
\newcommand{\afsl}{\widehat{\lieg}}
\newcommand\lieg{\mathfrak{g}}
\newcommand{\tsigma}{\tilde{\sigma}}
\newcommand{\bform}[2]{\left(#1 | #2\right)}
\newcommand{\cform}[2]{\langle #1,\, #2\rangle}
\newcommand{\f}{g}
\newcommand{\var}{\alpha_i^\vee t^{-j}, i\in I, j\in\mathbb{N}}
\newcommand{\q}{q}
\newcommand\lseqrpone{\lseq^{r+1}}
\newcommand\lrponeseq{\lseqrpone}
\newcommand\loneseq{\lseq^1}
\newcommand\barLam{\overline{\Lambda}_{i_\lambda}}
\newcommand{\Lamil}{\Lambda_{i_\lambda}}
\newcommand{\varpiil}{\varpi_{i_\lambda}}
\newcommand{\il}{i_{\lambda}}
\newcommand{\eoneseq}{\underline{\eta}^1}
\newcommand{\erseq}{\underline{\eta}^r}
\newcommand{\erponeseq}{\underline{\eta}^{r+1}}
\newcommand\lrseq{\lseq^r}
\newcommand\pattern{\mathcal{P}}
\newcommand\rpartd{\mathscr{P}_r(d)}
\newcommand{\lieh}{\csa}
\newcommand\pop{\mathfrak{P}}
\newcommand\pijiseqone{\underline{\pi(1)}^1}
\newcommand\pijiseqrr{\underline{\pi(r)}^r}
\newcommand\pijiseqtwo{\underline{\pi(2)}^i}
\newcommand\pijiseqoner{\underline{\pi(j)}^1}
\newcommand\pijiseqr{\underline{\pi (r)}^i}
\newcommand{\depthps}{d(\pop_s)}
\newcommand{\depthpspone}{d(\pop_{s+1})}
\newcommand{\depthpl}{d(\pop_\ell)}
\newcommand\lseq{\underline{\lambda}}
\newcommand\tseq{\underline{\theta}}
\newcommand{\pissseq}{\underline{\pi(s)}^{s}}
\newcommand{\pijsseq}{\underline{\pi(j)}^{s}}
\newcommand\pijiseq{\underline{\pi(j)}^i}
\newcommand\pijseq{\underline{\pi(j)}}
\newcommand\ljseq{\lseq^j}
\newcommand\plmkd{\popset_{\lambda,\mu}^k(d)}
\newcommand\popset{\mathbb{P}}
\newcommand\pkl{\popset_{\lambda+k\theta}}
\newcommand\pnotl{\popset_{\lambda}}
\newcommand\vcl[1]{v_{#1}}
\newcommand\rhocl[1]{\rho_{#1}}
\newcommand\currlieg{\lieg[t]}
\newcommand\curralg\currlieg
\newcommand\piseq{\underline{\pi}}
\newcommand{\dprimeij}{d^\prime_{i,j}}
\newcommand{\dss}{d_{s,s}}
\newcommand{\dprimess}{d^\prime_{s,s}}
\newcommand{\dsj}{d_{s,j}}
\newcommand{\dprimesj}{d^\prime_{s,j}}
\newcommand\dsq{d_{s,\q}}
\newcommand{\dprimesq}{d^\prime_{s,\q}}
\newcommand\pisqseq{\underline{\pi(\q)}^{s}}
\title[Stability of the Chari-Loktev bases]
{Stability of the Chari-Loktev bases \\for local Weyl modules of $\mathfrak{sl}_{r+1}[t]$}
\author{B.~Ravinder}
\address{Chennai Mathematical Institute\\ Plot H1, SIPCOT IT Park, Siruseri\\ Kelambakkam 603103,
India}
\email{bravinder@cmi.ac.in}
\subjclass[2010]{17B67 (17B10)}
\keywords{current algebra, Weyl module, Demazure module, Chari-Loktev basis, stability}
\gdef\SetFigFont#1#2#3#4#5{
  \reset@font\fontsize{#1}{#2pt}
  \fontfamily{#3}\fontseries{#4}\fontshape{#5}
  \selectfont}
\begin{document}
\allowdisplaybreaks
\numberwithin{equation}{section}
\begin{abstract}
We prove stability of the Chari-Loktev bases with respect to the inclusions of local Weyl modules of the current algebra $\mathfrak{sl}_{r+1}[t]$.  
This is conjectured in \cite{RRV2} and the  $r=1$ case is proved in \cite{RRV1}.     
Local Weyl modules being known to be Demazure submodules in the level one representations of the affine Lie algebra $\widehat{\mathfrak{sl}_{r+1}}$,
we obtain, by passage to the direct limit,  bases for the level one representations themselves. 
\end{abstract}

\maketitle
\section{Introduction}
Let $\lieg$ be a finite-dimensional complex simple Lie algebra and $\lieg[t]=\lieg\otimes\mathbb{C}[t]$ be its current algebra. 
Local Weyl modules, introduced by Chari and Pressley  \cite{CP} are important
finite-dimensional $\lieg[t]$-modules.  These modules are characterized by the following universal property: 
any finite-dimensional $\lieg[t]$-module generated by a one-dimensional highest weight space, is a quotient of a local Weyl module.
Corresponding to every dominant integral weight $\lambda$ of~$\lieg$,  there is one local Weyl module  denoted by~$W(\lambda)$.

In \cite{CP}, for $\lieg=\mathfrak{sl}_2$, 
Chari and Pressley also produced  monomial bases for  local Weyl modules. Later
Chari and Loktev \cite{CL} extended the construction of these bases to $\lieg=\mathfrak{sl}_{r+1}$. 
Using  these bases they also showed that 
the local Weyl modules are  $\lieg[t]$-stable Demazure modules occurring in a level one representations of the affine Lie algebra $\widehat{\lieg}$. 
As a consequence,  we get an embedding of 
local Weyl modules $W(\lambda)\hookrightarrow W(\lambda+k\theta)$, where $\theta$ is the long root and $k$ is a 
non-negative integer. It is important to note that for every non-negative integer $k$, the local Weyl module $W(\lambda+k\theta)$ 
can be realized as $\lieg[t]$-stable Demazure module occurring in a 
fixed level one  representation of $\widehat{\lieg}$; we shall denote this level one  representation here by $V$. Thus we have a chain
of inclusions:
\beq\label{e:chainofw}
W(\lambda)\hookrightarrow W(\lambda+\theta)\hookrightarrow \cdots\hookrightarrow W(\lambda+k\theta)\hookrightarrow W(\lambda+(k+1)\theta)\hookrightarrow \cdots (\hookrightarrow V)
\eeq
such that the union of the modules in the chain equals $V$.

For $\lieg=\mathfrak{sl}_2,$ it is shown in \cite{RRV1} that after a suitable normalization, the Chari-Pressley bases   
behave well with respect to the inclusions in \eqref{e:chainofw}. Moreover in the limit,
these bases stabilize and give a nice monomial basis for $V$. For $\lieg=\mathfrak{sl}_{r+1}$, we consider the Chari-Loktev (CL) bases of local Weyl modules.
In \cite{RRV2}, an elegant combinatorial description for their parameterizing set is given: namely, as the
set of {\em partition overlaid patterns} (POPs). Moreover a weight preserving injective map  between the parameterizing sets of the
bases of $W(\lambda+k\theta)$ and $W(\lambda+(k+1)\theta)$ is given. Using this  it is conjectured that  after a suitable normalization the CL bases  also have the stability property  with respect to the inclusions in \eqref{e:chainofw}.
The purpose of this  paper is to prove this conjecture.

More precisely, let $\popset_\lambda$ denote the parametrizing set of
the CL basis for $W(\lambda)$: the elements of $\popset_\lambda$ are POPs with bounding sequence $\lseq$, where $\lseq$ is an integer sequence corresponding to $\lambda$.
In \cite{RRV2}, for each non-negative integer $k$, 
a weight preserving embedding from $\popset_{\lambda+k\theta}$ into $\popset_{\lambda+(k+1)\theta}$ is given. Thus we have a
chain $\popset_{\lambda}\hookrightarrow \popset_{\lambda+\theta}\hookrightarrow \popset_{\lambda+2\theta}\hookrightarrow\cdots$.
Given an element $\pop$ of $\popset_{\lambda}$ and a non-negative integer~$k$, let $\pop^k$ be its image
in $\popset_{\lambda+k\theta}$ and let $v_{\pop^k}$ be the corresponding normalized CL
basis element. Consider the sequence $v_{\pop^k}, k = 0, 1, 2,\ldots,$ of elements in $V$. We prove  that this sequence stabilizes for large k (see Theorem \ref{MT}).
Passing to the direct limit, we obtain
a basis for $V$ consisting of the stable CL basis elements (see \S \ref{ss:basesforllami}).

\subsection*{Acknowledgements}
The author acknowledges support from TIFR, Mumbai, under the Visiting Fellowship scheme.
\section{Notation and Preliminaries}\label{notn}
Throughout the paper, $\complex$  denotes the field of complex numbers, $\mathbb{Z}$ the set of integers,   
$\mathbb{N}$ the set of positive integers, $\mathbb{Z}_{\geq 0}$ the set of non-negative integers,   
$\complex[t]$ the polynomial ring,   $\complex[t,t^{-1}]$ the ring of Laurent polynomials, and $\mathbf{U}(\mathfrak{a})$ the universal
enveloping algebra corresponding to a complex Lie algebra $\mathfrak{a}$.
\subsection{The Lie algebra $\mathfrak{sl}_{r+1}$}
Let  $\lieg=\mathfrak{sl}_{r+1}$, the Lie algebra of $(r+1)\times(r+1)$ trace zero
matrices over the field~$\fieldc$ of complex numbers. 
Let 
$\csa$ be the {\em standard\/} Cartan subalgebra of $\lieg$ consisting of trace zero diagonal
matrices.  Let $R\subset\csa^*$ denote the root system of $\lieg$ with respect to $\csa$. For $\alpha\in R$, let $\lieg_\alpha$ be the root space corresponding to $\alpha$. 
Let 
$\mathfrak{b}$ be the {\em standard\/} Borel subalgebra of $\lieg$ consisting of upper triangular 
matrices. For 1$\leq i\leq r+1$, let $\varepsilon_i\in\csa^*$ be the projection to the $i^{th}$ 
co-ordinate. Set $I=\{1,2,\ldots,r\}$. 
Let $\alpha_i=\varepsilon_i-\varepsilon_{i+1}$, $i\in I$, be the set of simple roots 
and $\alpha_{i,j}=\alpha_i+\cdots+\alpha_j=\varepsilon_i-\varepsilon_{j+1}, \,1\leq i \leq j \leq r$, be the set $R^+$ of
positive roots of $\lieg$ with 
respect to $\mathfrak{b}$. Let $\theta=\alpha_{1,r}$ be the highest root of $\lieg$.
For $1\leq i,j \leq r+1$, let $E_{i,j}$ be the $(r+1)\times (r+1)$ matrix with 1  in the $(i,j)^{th}$
position
and 0 
elsewhere. Set 
\begin{align*}
& x_{\varepsilon_i-\varepsilon_j}=E_{i,j},  & h_{\varepsilon_i-\varepsilon_j}=E_{i,i}-E_{j,j},\qquad & \forall\,\, 1\leq i\neq j\leq r+1, \\
& x_{i,j}^+=x_{\alpha_{i,j}}^+=x_{\varepsilon_i-\varepsilon_{j+1}},  & x_{i,j}^-=x_{\alpha_{i,j}}^-=x_{\varepsilon_{j+1}-\varepsilon_i},  \qquad & \forall\,\,
1\leq i\leq j\leq r.
\end{align*}
Define subalgebras $\mathfrak{n}^{\pm}$ of $\lieg$ by 
$$\mathfrak{n}^{\pm}=\bigoplus_{1\leq i \leq j \leq r} \complex x_{{i,j}}^{\pm}.$$ Now we have the following decomposition: $\lieg= \mathfrak{n}^{-}\oplus \csa \oplus \mathfrak{n}^{+}.$ 
For $x,y\in\lieg$, let $(x|y):=\tr (xy)$ be the normalized invariant bilinear form on $\lieg$.
Let $W$ denote the Weyl group of $\lieg$.

Let $\omega_i=\varepsilon_1+\cdots+\varepsilon_i,\, i\in I,$ be the set of fundamental weights 
of $\lieg$.  
The weight lattice $P$, the set $P^+$ of dominant integral weights, and the root lattice $Q$ of $\lieg$ are defined as follows:
$$P=\sum_{i\in I}\mathbb{Z}\varpi_i, \quad P^+=\sum_{i\in I}\mathbb{Z}_{\geq0}\varpi_i, \quad\textup{and}\quad Q=\sum_{i\in I}\mathbb{Z}\alpha_i.$$
For $\lambda=m_1\varpi_1+\cdots+m_r\varpi_r\in P^+$, we associate an integer sequence
$\underline{\lambda}$ by $\underline{\lambda}=(\lambda_1\geq\cdots\geq \lambda_r\geq\lambda_{r+1}=0)$, where $\lambda_i:=m_i+\cdots+m_r$. Given an integer sequence 
$\underline{\lambda}=(\lambda_1\geq\cdots\geq \lambda_r\geq\lambda_{r+1}=0)$, we associate an element $\lambda$ of $P^+$ by $\lambda=\lambda_1\varepsilon_1+\cdots+\lambda_r\varepsilon_r=(\lambda_1-\lambda_2)\omega_1+\cdots+(\lambda_r-\lambda_{r+1})\omega_r$.
\subsection{The affine Lie algebra $\widehat{\lieg}$}
Let $\widehat{\lieg}$ be the (untwisted) affine Lie algebra corresponding to $\lieg$ defined by
\[\widehat{\lieg}=\lieg\otimes\complex[t,t^{-1}]\oplus\complex c\oplus \complex d,\]
where $c$ is central and the other Lie brackets are given by
\[[x\otimes t^m,y\otimes t^n]=[x,y]\otimes t^{m+n}+m\delta_{m,-n}(x|y)c,\]
\[[d,x\otimes t^m]=m(x\otimes t^m),\]
for all $x,y\in\lieg$ and integers $m,n$. The Lie subalgebras $\widehat{\csa}$ and $\widehat{\mathfrak{b}}$ of $\widehat{\lieg}$ are given by
\[\widehat{\csa}=\csa\oplus\complex c \oplus \complex d,\qquad \widehat{\mathfrak{b}}=\lieg\otimes t\complex[t]\oplus\mathfrak{b}\oplus\complex c\oplus\complex d.\]
We regard ${\csa}^{*}$ as a subspace of ${\widehat{\csa}}^{*}$ by setting
$\langle\lambda, c\rangle=\langle\lambda, d\rangle=0$ for   all $\lambda\in{\csa}^{*}$. 
Let $\delta, \Lambda_0\in{\csahat}^{*}$ be given by 
\[\langle\delta, \csa+\complex c\rangle=0, \, \, \langle\delta, d\rangle=1, \qquad \langle\Lambda_0, \csa+\complex d\rangle=0,
\,\,\langle\Lambda_0, c\rangle=1.\]
  There is a non-degenerate, symmetric, $\afw$-invariant, bilinear  form
$\bform{\cdot}{\cdot}$ on $\csaf^*$, given by requiring that $\csa^*$ be orthogonal to $\complex\delta + \complex \Lambda_0$,
together with the relations 
$$\bform{\alpha_i}{\alpha_i} =2, \quad\bform{\alpha_i}{\alpha_j} =-\delta_{j,i+1},\,\, \forall \,\,1\leq i < j \leq {r},\qquad 
\bform{\delta}{\delta} = \bform{\Lambda_0}{\Lambda_0} = 0,\,\,\textup{and}\,\,
\bform{\delta}{\Lambda_0}=1.$$ 

The elements $\alpha_0=\delta-\theta, \,\alpha_1,\ldots,\alpha_r$ are the simple roots of $\widehat{\lieg}$ 
and the corresponding coroots are $\alpha_0^\vee=c-h_\theta, \,\alpha_1^{\vee}=h_{\alpha_1},\ldots, \alpha_r^{\vee}=h_{\alpha_r}$. Set $\widehat{I}=I\cup\{0\}$. 
Let  $e_i$, $f_i$ ($i\in\widehat{I}$) denote the Chevalley generators of
$\afsl$: 
\[ e_0 = x_{1,r}^{-}\otimes t,\quad f_0 = x_{1,r}^{+}\otimes t^{-1},\qquad e_i = x_{i,i}^{+},\quad f_i = x_{i,i}^{-}, \quad\forall\,\, i\in I. \]
For $\alpha\in R$ and  $s\in\mathbb{Z}$, set
$ x_{\alpha+s\delta}=x_{\alpha}\otimes t^{s}.$
The weight lattice (resp.\ the set of dominant integral weights) of $\widehat{\lieg}$ is defined by 
\[ \widehat{P} \; (\text{resp.\ } \widehat{P}^+) = \{\Lambda \in
\csaf^*: \cform{\Lambda}{\alpha_p^\vee} \in \integers \; (\text{resp.\
}  \integers_{\geq 0}),\,\forall\, p\in\widehat{I}\}.\]
For an element $\Lambda\in \widehat{P}$, the  integer $\langle \Lambda,\,c\rangle$ is called 
the {\em level} of $\Lambda$. 
\subsection{The Weyl group of $\widehat{\lieg}$}
For each $p\in \widehat{I}$,  the fundamental reflection $s_{\alpha_p}$ (or $s_{p}$) is given by
\[s_{p}(\Lambda) = \Lambda-\langle \Lambda, \alpha_p^{\vee}\rangle \alpha_p,
 \quad\forall\,\,\Lambda\in\widehat{\csa}^*.\]
The subgroup $\widehat{W}$ of $GL(\widehat{\csa}^*)$ generated by all fundamental reflections 
$s_p,\,p\in\widehat{I}$ is called the affine Weyl group. 
We regard $W$ naturally as a subgroup of $\afw$. 
Given $\alpha \in \csa^*$, let $t_\alpha \in GL(\csaf^*)$ be defined by
\[
t_{\alpha}(\Lambda)=\Lambda+ \bform{\Lambda}{\delta} \alpha -\bform{\Lambda}{\alpha}
\delta - \frac{1}{2} \bform{\Lambda}{\delta} \bform{\alpha}{\alpha} \delta, \;\;
\text{ for } \Lambda \in \csaf^*.
\]
It is easy to see that
\beq\label{e:trp}
 t_\alpha\,t_\beta=t_{\alpha+\beta} \qquad\textup{and}\qquad w\,t_\alpha\, w^{-1}=t_{w\alpha}, \quad \forall\,\,\alpha,\beta\in\csa^*, w\in W.\eeq
The  translation subgroup $T_Q$ of $\afw$ is defined by $T_Q := \{t_{\alpha}\in 
GL(\csaf^*): \alpha \in Q\}$.

The following proposition gives the relation between $W$ and $\widehat{W}$. 
\begin{proposition}\cite[Proposition 6.5]{K}
$\widehat{W} = W\ltimes T_Q.$
\end{proposition}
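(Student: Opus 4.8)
The plan is to establish the three conditions that exhibit $\widehat{W}$ as the internal semidirect product $W\ltimes T_Q$: that $T_Q$ is a normal subgroup of $\widehat{W}$, that $W\cap T_Q=\{1\}$, and that $\widehat{W}=W\cdot T_Q$. By \eqref{e:trp} the map $\alpha\mapsto t_\alpha$ is a group isomorphism of $(Q,+)$ onto $T_Q$, so $T_Q$ is an abelian subgroup of $GL(\csaf^*)$; and the second relation in \eqref{e:trp}, together with the $W$-stability of $Q$, shows that $W$ normalizes $T_Q$, so that $W\cdot T_Q$ is a subgroup of $GL(\csaf^*)$. It therefore suffices to prove the equality $\widehat{W}=W\cdot T_Q$ together with the triviality of $W\cap T_Q$.

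The one substantive computation is the reflection identity $s_0=t_\theta\,s_\theta$, which I would verify by evaluating both sides on an arbitrary $\Lambda\in\csaf^*$. A direct calculation from the displayed formula for $t_\alpha$ --- using $\bform{\theta}{\delta}=0$, $\bform{\theta}{\theta}=2$, and $\cform{\Lambda}{h_\theta}=\bform{\Lambda}{\theta}$ --- gives
\[
t_\theta\,s_\theta(\Lambda)=\Lambda-\bigl(\bform{\Lambda}{\delta}-\bform{\Lambda}{\theta}\bigr)(\delta-\theta).
\]
On the other hand $\bform{\Lambda}{\delta}=\cform{\Lambda}{c}$, since both sides are linear in $\Lambda$ and agree on the basis $\alpha_1,\dots,\alpha_r,\delta,\Lambda_0$ of $\csaf^*$; as $\alpha_0=\delta-\theta$ and $\alpha_0^\vee=c-h_\theta$, the right-hand side above equals $s_0(\Lambda)$. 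Granting this identity, every fundamental reflection $s_p$ lies in $W\cdot T_Q$ --- trivially for $p\in I$, and because $s_0=t_\theta s_\theta\in T_Q\cdot W=W\cdot T_Q$ for $p=0$ --- so $\widehat{W}\subseteq W\cdot T_Q$. For the reverse inclusion, $W\subseteq\widehat{W}$ and $t_\theta=s_0\,s_\theta\in\widehat{W}$ (from $s_0=t_\theta s_\theta$ and $s_\theta^2=1$); since $\lieg$ has type $A_r$, the $W$-orbit of $\theta$ is the whole root system $R$, which spans $Q$, so $T_Q$ is generated by the conjugates $t_{w\theta}=w\,t_\theta\,w^{-1}$ ($w\in W$), all lying in $\widehat{W}$. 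Hence $T_Q\subseteq\widehat{W}$ and $\widehat{W}=W\cdot T_Q$.

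It remains to check $W\cap T_Q=\{1\}$. If $w=t_\alpha$ with $w\in W$ and $\alpha\in Q$, apply both sides to $\Lambda_0$: the left side fixes $\Lambda_0$ (as $\cform{\Lambda_0}{\alpha_i^\vee}=0$ for all $i\in I$), while the formula for $t_\alpha$ gives $t_\alpha(\Lambda_0)=\Lambda_0+\alpha-\tfrac12\bform{\alpha}{\alpha}\delta$ (using $\bform{\Lambda_0}{\delta}=1$ and $\bform{\Lambda_0}{\alpha}=0$); comparing $\csa^*$-components forces $\alpha=0$, hence $w=t_\alpha=1$. (Alternatively, $T_Q\cong Q$ is torsion-free while $W$ is finite.) Since $T_Q$ is normalized both by $W$ and by itself, it is normal in $W\cdot T_Q=\widehat{W}$, and together with $W\cap T_Q=\{1\}$ this gives $\widehat{W}=W\ltimes T_Q$. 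The only delicate step is the identity $s_0=t_\theta s_\theta$: keeping track of the $\delta$-coefficient --- in particular the term $-\tfrac12\bform{\Lambda}{\delta}\bform{\alpha}{\alpha}\delta$ in the formula for $t_\alpha$ and the identification $\bform{\cdot}{\delta}=\cform{\cdot}{c}$ --- is where an error is most likely to creep in; the remaining steps are formal.
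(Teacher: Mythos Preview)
Your argument is correct. The paper does not supply a proof of this proposition at all: it is simply quoted from Kac's book as \cite[Proposition~6.5]{K}, so there is no ``paper's own proof'' to compare against. What you have written is essentially the standard proof found in Kac --- verify the key identity $s_0=t_\theta s_\theta$ by direct computation, use it together with $w\,t_\alpha\,w^{-1}=t_{w\alpha}$ to show that $W$ and $T_Q$ generate $\widehat{W}$ and that $T_Q\trianglelefteq\widehat{W}$, and separate $W$ from $T_Q$ by evaluating at $\Lambda_0$ (or by the torsion argument). Your computation of $s_0=t_\theta s_\theta$ is accurate, and the remark that in type~$A_r$ the $W$-orbit of $\theta$ is all of $R$, which contains the simple roots and hence generates $Q$, is exactly what is needed to get $T_Q\subseteq\widehat{W}$.
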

The extended affine Weyl group $\widetilde{W}$ is the semi-direct product 
\[\widetilde{W}:=W\ltimes T_P,\]
where $T_P = \{t_{\alpha}\in GL(\csaf^*): \alpha \in P\}$.
For $i\in I$, consider the element $\sigma_i=t_{\omega_i}w_{0,i}w_0 \in \widetilde{W}$, where $w_0$ is the longest element in $W$ and $w_{0,i}$ is the longest element in $W_{\omega_i}$, the stabilizer of $\omega_i$ in $W$.
It is an automorphism of the Dynkin diagram of $\afsl$:
$$\sigma_i\,\alpha_p = \alpha_{i+p \,(\text{mod} \,\,r+1)},\quad\forall\,\,p\in \widehat{I},\qquad \textup{and} \qquad\sigma_i\,\rho=\rho.$$
Here, $\rho\in \csaf^*$ is the Weyl vector, defined by
$\cform{\rho}{\alpha^\vee_p} = 1,\,\, \forall\,\, p\in\widehat{I}$, and
$\cform{\rho}{d}=0$. 
Let $\digautos$ be the subgroup generated by $\{\sigma_i:  i\in I\}$. 
Now we also have
$\widetilde{W} = \afw \rtimes \digautos$ (see \cite[Chapter VI]{Bour}).
\subsection{Irreducible modules of $\widehat{\lieg}$}
Given $\Lambda \in \widehat{P}^{+},$ let $L(\Lambda)$ be the
irreducible  $\afsl$-module with highest weight $\Lambda$. It is the cyclic $\afsl$-module generated by $v_\Lambda$, with defining relations:
\begin{align}
h\,v_\Lambda &= \cform{\Lambda}{h} v_\Lambda, \quad\forall\, \,h \in \csaf,\label{e:defofllambd1}\\
e_p \,v_\Lambda &= 0, \quad\forall\,\,p\in \widehat{I},\label{e:defofllambd2}\\
f^{\cform{\Lambda}{\alpha^{\vee}_p}+1}_p \,v_\Lambda &=0,  \quad\forall\,\,p\in \widehat{I}.\label{e:defofllambd3}
\end{align}
\noindent
It has the weight space decomposition: $L(\Lambda) = \oplus_{\mu \in
  \csaf^*} L(\Lambda)_{\mu}$. The $\mu$ for which 
$L(\Lambda)_{\mu} \neq 0$ are the {\em weights of } $L(\Lambda)$.

The following two results are well-known:
\begin{proposition}\label{wtsbasicrep}\cite{K} Let $\Lambda\in \widehat{P}^+$ is of level 1. Then
\be
\item\label{i:weights} the set of weights of $L(\Lambda)$ is 
$\{t_{\alpha}(\Lambda)- m\delta\mid \alpha\in\,Q, 
m\in\integers_{\geq0}\}$,
\item for $\alpha\in\,Q$ and $m\in\integers_{\geq0}$, we have $$\dim\,L(\Lambda)_{t_{\alpha}(\Lambda)- m\delta}= \textup{the number of} \,\,r\textup{-colored partitions of}
\,\, m \,\,(\textup{see}\,\, \S \ref{ss:coloredp}).$$
\ee
\end{proposition}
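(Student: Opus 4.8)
\emph{Proof proposal.} The plan is to derive both assertions from two inputs only: the decomposition $\afw=W\ltimes T_Q$ recorded above, together with the $\afw$-invariance of the weight multiplicities of $L(\Lambda)$; and the value $\prod_{n\ge 1}(1-q^n)^{-r}$ of the level-one string function.

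For part~(1) I would start from the standard facts about the integrable highest weight module $L(\Lambda)$, $\Lambda\in\widehatpplus$: its weight set $\wt L(\Lambda)$ is $\afw$-stable, is contained in $\Lambda-\widehat Q^+$ (where $\widehat Q^+$ denotes the $\integers_{\geq 0}$-span of $\alpha_0,\dots,\alpha_r$), consists of the $\afw$-orbits of the dominant weights $\mu\le\Lambda$, and satisfies $w\delta=\delta$ for every $w\in\afw$. Since all marks of $\afsl$ equal $1$, the level-one elements of $\widehatpplus$ are the $\Lambda_p+m\delta$ with $p\in\widehat I$, $m\in\integers$; as $\delta$ lies in the affine root lattice and $\Lambda_i\equiv\Lambda_p$ modulo it only when $i=p$, those among them that are $\le\Lambda$ are precisely $\Lambda-m\delta$, $m\in\integers_{\geq 0}$. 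Hence every weight of $L(\Lambda)$ has the form $w\Lambda-m\delta$ with $w\in\afw$ and $m\in\integers_{\geq 0}$. To bring this to the stated form I would write $w=u\,t_\beta$ with $u\in W$, $\beta\in Q$ and compute, using $\bform{\Lambda}{\delta}=1$ (level one) and $t_{u\beta}u=u\,t_\beta$, that $w\Lambda=t_{\gamma+u\beta}(\Lambda)+c_\gamma\,\delta$, where $\gamma=u\Lambda-\Lambda\in Q$ and $c_\gamma=\bform{\Lambda}{\gamma}+\tfrac12\bform{\gamma}{\gamma}$. The only point to check is $c_\gamma=0$, which is immediate since $W$ preserves $\bform{\cdot}{\cdot}$, so that $\bform{\Lambda}{\Lambda}=\bform{u\Lambda}{u\Lambda}=\bform{\Lambda}{\Lambda}+2c_\gamma$. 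Combined with $T_Q\subseteq\afw$ this gives $\afw\Lambda=\{t_\alpha(\Lambda):\alpha\in Q\}$ and therefore $\wt L(\Lambda)=\{t_\alpha(\Lambda)-m\delta:\alpha\in Q,\ m\in\integers_{\geq 0}\}$, which is~(1).

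For part~(2) I would first reduce to one string function: $t_\alpha$ fixes $\delta$, so $t_\alpha(\Lambda)-m\delta=t_\alpha(\Lambda-m\delta)$, and weight multiplicities are $\afw$-invariant, whence $\dim L(\Lambda)_{t_\alpha(\Lambda)-m\delta}=\dim L(\Lambda)_{\Lambda-m\delta}$ for all $\alpha\in Q$. It then remains to show $\sum_{m\ge 0}\dim L(\Lambda)_{\Lambda-m\delta}\,q^m=\prod_{n\ge 1}(1-q^n)^{-r}$, the generating function of $r$-colored partitions. I would obtain this from the homogeneous (Frenkel--Kac) realization $L(\Lambda_0)\cong\fieldc[Q]\otimes S(\csa\otimes t^{-1}\fieldc[t^{-1}])$, in which $e^\beta\otimes 1$ carries weight $t_\beta(\Lambda_0)$ and the degree-$(-n)$ part of the Fock factor is $r$-dimensional and contributes the grading $-n\delta$; alternatively one may use the Weyl--Kac character formula together with a suitable Macdonald identity. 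A general level-one $\Lambda$ then reduces to the case $\Lambda_0$ by applying a diagram automorphism $\sigma_i\in\digautos$, which permutes the level-one fundamental weights, fixes $\delta$, and normalizes $T_Q$.

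The translation bookkeeping and the reductions above are routine; the hard part is the single identity $\sum_m\dim L(\Lambda)_{\Lambda-m\delta}\,q^m=\prod_{n\ge 1}(1-q^n)^{-r}$ for the level-one string function, which rests on either the vertex-operator construction of level-one modules or a nontrivial combinatorial (Macdonald) identity, everything else being formal once one knows $\afw=W\ltimes T_Q$ and the $\afw$-invariance of weight multiplicities. Since the proposition is classical, in practice one simply cites \cite{K} for this step.
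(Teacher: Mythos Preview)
Your argument is correct in both parts: the reduction of the weight set to $\{t_\alpha(\Lambda)-m\delta\}$ via $\afw=W\ltimes T_Q$ and the vanishing of the constant $c_\gamma$ is clean, and the multiplicity computation via $\afw$-invariance plus the level-one string function $\prod_{n\ge1}(1-q^n)^{-r}$ is the standard route. Note, however, that the paper does not prove this proposition at all --- it is stated with a bare citation to \cite{K} and used as a black box --- so there is no ``paper's own proof'' to compare against; what you have written is essentially the proof one finds in Kac's book (Propositions~12.13 and the surrounding discussion of string functions), and it would serve perfectly well if the paper had chosen to include one.
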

\begin{theorem}\cite{FK}\label{weights}Given $m\in\mathbb{Z}_{\geq0}$, every element of  the weight space of $L(\Lambda_0)$ of weight $\Lambda_0-m\delta$ can be written as $\f_m\,v_{\Lambda_0}$ for some polynomial $\f_m$ in $\var$. 
\end{theorem}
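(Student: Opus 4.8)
The plan is to compute the weight space $L(\Lambda_0)_{\Lambda_0-m\delta}$ by means of the homogeneous Heisenberg subalgebra of $\afsl$ and then to match dimensions using Proposition~\ref{wtsbasicrep}. Put $\mathfrak{s}^{-}=\csa\otimes t^{-1}\complex[t^{-1}]$, $\mathfrak{s}^{+}=\csa\otimes t\complex[t]$, and let $\mathfrak{a}=\csa\otimes\complex[t,t^{-1}]\oplus\complex c$ be the homogeneous Heisenberg subalgebra. Since $\{\hi:i\in I\}$ is a basis of $\csa$, the space $\mathfrak{s}^{-}$ has basis $\{\hi\otimes t^{-j}:i\in I,\ j\in\posint\}$; moreover the central term of $[\,h\otimes t^{-j},\,h'\otimes t^{-k}\,]$ is present only when $j+k=0$, so $\mathfrak{s}^{-}$ is abelian and $\mathbf{U}(\mathfrak{s}^{-})=S(\mathfrak{s}^{-})$ is the polynomial algebra on these generators. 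A monomial in the $\hi\otimes t^{-j}$ of total $t$-degree $-m$ commutes with $\csa$ and has $\ad d$-eigenvalue $-m$, hence maps $v_{\Lambda_0}$ into $L(\Lambda_0)_{\Lambda_0-m\delta}$; so the theorem will follow once we show that the vectors $u\,v_{\Lambda_0}$, with $u$ running over the degree-$(-m)$ monomials of $S(\mathfrak{s}^{-})$, span $L(\Lambda_0)_{\Lambda_0-m\delta}$. I would obtain this by proving these vectors linearly independent and then counting them.

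First I would observe that $v_{\Lambda_0}$ is a vacuum vector for $\mathfrak{a}$: by \eqref{e:defofllambd1} we have $h\,v_{\Lambda_0}=0$ for $h\in\csa$ and $c\,v_{\Lambda_0}=v_{\Lambda_0}$, while $\mathfrak{s}^{+}=\csa\otimes t\complex[t]$ is the sum of the positive imaginary root spaces $\csa\otimes t^{s}$ ($s\geq1$), hence lies in the subalgebra generated by $e_0,e_1,\dots,e_r$, so $\mathfrak{s}^{+}\,v_{\Lambda_0}=0$ by \eqref{e:defofllambd2}. By the PBW theorem $\mathbf{U}(\mathfrak{a})\,v_{\Lambda_0}=\mathbf{U}(\mathfrak{s}^{-})\,v_{\Lambda_0}=S(\mathfrak{s}^{-})\,v_{\Lambda_0}$, so the submodule generated by $v_{\Lambda_0}$ is a cyclic vacuum module for the Heisenberg algebra $\mathfrak{a}$ of central charge $1$. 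The crux — and the main obstacle — is the injectivity of the linear map $S(\mathfrak{s}^{-})\to L(\Lambda_0)$, $u\mapsto u\,v_{\Lambda_0}$. This is the standard irreducibility/faithfulness statement for the Fock module of a Heisenberg algebra of nonzero central charge; I would either cite it (cf.\ \cite{K}) or argue directly. For the direct argument: suppose $u\,v_{\Lambda_0}=0$ for some nonzero $u\in S(\mathfrak{s}^{-})$, of minimal positive degree $d$. For $h\in\csa$ and $k\in\posint$ one has $(h\otimes t^{k})\,u\,v_{\Lambda_0}=[\,h\otimes t^{k},\,u\,]\,v_{\Lambda_0}=(D_{h,k}u)\,v_{\Lambda_0}$, where $D_{h,k}$ is the degree-lowering constant-coefficient derivation of $S(\mathfrak{s}^{-})$ with $D_{h,k}(\hi\otimes t^{-j})=k\,\delta_{j,k}\,(h\,|\,\hi)$ (using $c\,v_{\Lambda_0}=v_{\Lambda_0}$ and $\mathfrak{s}^{+}\,v_{\Lambda_0}=0$); nondegeneracy of $(\cdot\,|\,\cdot)$ on $\csa$ forces $D_{h,k}u\neq0$ for a suitable choice of $h,k$, producing a nonzero element of $S(\mathfrak{s}^{-})$ of degree $d-1$ that kills $v_{\Lambda_0}$ — contradicting minimality of $d$ (or, when $d=1$, that $v_{\Lambda_0}\neq0$).

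Granting injectivity, the vectors $u\,v_{\Lambda_0}$ attached to the degree-$(-m)$ monomials of $S(\mathfrak{s}^{-})$ are linearly independent in $L(\Lambda_0)_{\Lambda_0-m\delta}$. A monomial $\prod_{i\in I,\ j\in\posint}(\hi\otimes t^{-j})^{c_{ij}}$ of degree $-m$ is exactly the datum of an $I$-tuple of partitions whose sizes sum to $m$, i.e.\ of an $r$-colored partition of $m$; so the number of such monomials equals the number of $r$-colored partitions of $m$, which by the second assertion of Proposition~\ref{wtsbasicrep} equals $\dim L(\Lambda_0)_{\Lambda_0-m\delta}$. Hence these linearly independent vectors form a basis of $L(\Lambda_0)_{\Lambda_0-m\delta}$, and every vector of this weight space is a $\complex$-linear combination of monomials in the $\hi\otimes t^{-j}$ applied to $v_{\Lambda_0}$; that is, it equals $\f_m\,v_{\Lambda_0}$ for a polynomial $\f_m$ in $\var$, which proves the theorem. (This is in substance the computation of the $\csa$-weight-zero part of the Frenkel--Kac homogeneous realization $L(\Lambda_0)\cong\complex[Q]\otimes S(\mathfrak{s}^{-})$.)
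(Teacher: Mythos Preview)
The paper does not supply a proof of this theorem; it is stated with the citation \cite{FK} and used as a black box. Your argument is correct and is, in essence, the Frenkel--Kac computation that the citation points to: the homogeneous Heisenberg subalgebra $\mathfrak{a}=\csa\otimes\complex[t,t^{-1}]\oplus\complex c$ acts on $L(\Lambda_0)$ with $v_{\Lambda_0}$ a vacuum vector of central charge~$1$, the resulting map $S(\mathfrak{s}^{-})\to L(\Lambda_0)$ is injective by the standard Fock-module argument you outline, and the dimension count against Proposition~\ref{wtsbasicrep} finishes the job.

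Two small remarks. First, your justification of $\mathfrak{s}^{+}\,v_{\Lambda_0}=0$ via ``$\csa\otimes t^{s}$ lies in the subalgebra generated by $e_0,\dots,e_r$'' is fine but could be shortened: $\Lambda_0+s\delta$ with $s>0$ is not a weight of $L(\Lambda_0)$ by Proposition~\ref{wtsbasicrep}\eqref{i:weights}, so $(h\otimes t^{s})\,v_{\Lambda_0}=0$ immediately. Second, there is a mild circularity of spirit (not of logic) in invoking Proposition~\ref{wtsbasicrep} for the dimension count, since in \cite{FK} both that proposition and the present theorem are consequences of the single isomorphism $L(\Lambda_0)\cong\complex[Q]\otimes S(\mathfrak{s}^{-})$; within the paper, however, Proposition~\ref{wtsbasicrep} is cited independently from \cite{K}, so your use of it is legitimate.
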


We let $\Lambda_i := \sigma_i \Lambda_0$ for $i\in I$. Then, $\Lambda_0, \Lambda_1,\ldots, \Lambda_{r}$ are (a choice of) fundamental weights corresponding to the coroots $\alpha^{\vee}_0, \alpha^{\vee}_1,\ldots, \alpha_{r}^{\vee}$, i.e., $\cform{\Lambda_p}{\alpha^\vee_q} = \delta_{p,q}$ for  $p,q \in \widehat{I}$. 
 Let 
$v_{\Lambda_p}$ denote a highest weight vector of $L(\Lambda_p)$ for $p\in\widehat{I}$.
\subsection{The current algebra and its Weyl modules}
The current algebra
$\lieg[t]$:=$\lieg\otimes\complex [t]$ is a Lie
algebra with Lie bracket is obtained from that of $\lieg$ by
extension of scalars to $\complex[t]$:   
$$[x\otimes t^m, y\otimes t^n]:=[x,y]\otimes t^{m+n},\qquad \forall\,\,x, y \in \lieg,\,\,m, n\in \mathbb{Z}_{\geq0}.$$   
\bdefn 
(see~\cite[\S1.2.1]{CL})
Given $\lambda\in P^{+}$, the {\em local Weyl module\/} $W(\lambda)$ is the cyclic  $\lieg[t]$-module with generator $w_\lambda$ and relations:
$$ (\mathfrak{n}^{+}\otimes t\complex[t])\,w_{\lambda}=0, \qquad (h\otimes t^{s})\,w_{\lambda}=\langle\lambda, h\rangle\delta_{s,0},\,\,\forall\,\,h\in\csa, \,s\in\mathbb{Z}_{\geq0},\qquad 
f_{i}^{\langle\lambda, \alpha_i^{\vee}\rangle+1}\,w_{\lambda}=0,\,\,\forall\,\,i\in I.$$ 
\edefn
\subsection{Weyl modules as Demazure modules}Given $w\in\afw$ and $\Lambda\in\widehat{P}^{+}$, define a $\borel$-submodule $V_w(\Lambda)$  of $L(\Lambda)$ by 
$$V_w(\Lambda):={\mathbf{U}}(\borel)\,L(\Lambda)_{w\Lambda}.$$
 We call the $\borel$-module $V_w(\Lambda)$ as the 
{\em Demazure module} of $L(\Lambda)$ associated to $w$. More
generally,  given an element $w$ of the extended affine Weyl group
$\widetilde{W}$, we write $w = u \tau$ with $u \in \afw, \tau \in \digautos$, 
and define, following \cite{FoL},
the associated Demazure module by $V_{w}(\Lambda):=V_u\left(\tau(\Lambda)\right)$.

The following theorem identifies the local Weyl modules with the $\lieg[t]$-stable Demazure modules. 
\begin{theorem}\label{WvsD}\cite{CL, FoL}Given $\lambda\in P^+$, the local Weyl module $W(\lambda)$ 
is isomorphic to the $\lieg[t]$-stable Demazure module $V_{t_{w_0\lambda}}(\Lambda_0) $, as modules for the current algebra $\lieg[t]$.
\end{theorem}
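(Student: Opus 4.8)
Following \cite{CL} and \cite{FoL}, the plan is to realize the Demazure module $V:=V_{t_{w_0\lambda}}(\Lambda_0)$ as a cyclic $\lieg[t]$-module whose cyclic generator satisfies exactly the defining relations of $W(\lambda)$, deduce a surjection $W(\lambda)\twoheadrightarrow V$ of $\lieg[t]$-modules, and then upgrade it to an isomorphism by a dimension count. First unwind the extended-affine-Weyl convention: write $t_{w_0\lambda}=u_\lambda\tau_\lambda$ with $u_\lambda\in\afw$, $\tau_\lambda\in\digautos$, so that $V=V_{u_\lambda}(\tau_\lambda\Lambda_0)=V_{u_\lambda}(\Lambda_{i_\lambda})\subseteq L(\Lambda_{i_\lambda})$, where $i_\lambda\in\widehat I$ is determined by $\lambda-\varpi_{i_\lambda}\in Q$ (so $i_\lambda=0$ and $\tau_\lambda=\mathrm{id}$ when $\lambda\in Q$).

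\emph{Step 1: $\lieg[t]$-stability.} Let $\mathfrak p_I=\lieg+\borel$ be the parabolic subalgebra of $\afsl$ attached to $I\subseteq\widehat I$; it contains $\lieg[t]$. By the standard criterion for parabolic-stability of Demazure modules, $V_{u_\lambda}(\Lambda_{i_\lambda})$ is $\mathfrak p_I$-stable precisely when $u_\lambda$ is the longest element of the coset $Wu_\lambda$. Since $w_0\lambda$ is antidominant, a standard length computation for translations gives $\ell(w\,t_{w_0\lambda})=\ell(t_{w_0\lambda})-\ell(w)$ for all $w\in W$, so $t_{w_0\lambda}$ — equivalently $u_\lambda$ — is longest modulo the finite Weyl group. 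Hence $V$ is $\mathfrak p_I$-stable, and in particular a $\lieg[t]$-module.

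\emph{Step 2: a cyclic generator with the right relations.} Since $V$ is $\lieg$-stable and $V=\mathbf U(\borel)\,L(\Lambda_{i_\lambda})_{u_\lambda\Lambda_{i_\lambda}}$ with the extremal weight space $L(\Lambda_{i_\lambda})_{u_\lambda\Lambda_{i_\lambda}}$ one-dimensional, spanned by some $v_{u_\lambda\Lambda_{i_\lambda}}$, we get $V=\mathbf U(\lieg[t])\,v_{u_\lambda\Lambda_{i_\lambda}}$. This extremal vector has $\csa$-weight $w_0\lambda$, the lowest weight of the irreducible $\lieg$-module $V(\lambda)$ of highest weight $\lambda$, so the $\lieg$-submodule it generates is a copy of $V(\lambda)$; let $v$ be its highest weight vector. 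Then $v$ is again an extremal weight vector of $L(\Lambda_{i_\lambda})$, of weight $t_{\alpha_v}(\Lambda_{i_\lambda})$ with $\alpha_v:=\lambda-\varpi_{i_\lambda}\in Q$, and $V=\mathbf U(\lieg[t])\,v$. I would then check that $v$ satisfies the defining relations of $W(\lambda)$. The relations $h\,v=\langle\lambda,h\rangle v$ ($h\in\csa$) and $f_i^{\langle\lambda,\alpha_i^\vee\rangle+1}v=0$ ($i\in I$) are immediate from $\csa$-weight reasons and integrability. For the annihilation relations $(\csa\otimes t\complex[t])v=0=(\nplus\otimes t\complex[t])v$: a root of $\csa\otimes t\complex[t]$ has the form $s\delta$ with $s\geq1$, and $t_{-\alpha_v}(s\delta)=s\delta$ is a positive root of $\afsl$; a root of $\nplus\otimes t\complex[t]$ has the form $\gamma+s\delta$ with $\gamma\in R^+$, $s\geq1$, and $t_{-\alpha_v}(\gamma+s\delta)=\gamma+(s+\bform{\gamma}{\alpha_v})\delta$, which is again a positive root of $\afsl$ because $\bform{\gamma}{\alpha_v}=\bform{\gamma}{\lambda}-\bform{\gamma}{\varpi_{i_\lambda}}\geq 0-1=-1$ (here $\bform{\gamma}{\lambda}\geq0$ since $\lambda$ is dominant, and $\bform{\gamma}{\varpi_{i_\lambda}}\in\{0,1\}$ for $\gamma\in R^+$, all fundamental weights of $\mathfrak{sl}_{r+1}$ being minuscule). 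Since $v$ is the extremal vector at $t_{\alpha_v}\Lambda_{i_\lambda}$ and these roots remain positive after applying $t_{\alpha_v}^{-1}$, the corresponding root vectors annihilate $v$. This yields a surjective $\lieg[t]$-homomorphism $W(\lambda)\twoheadrightarrow V$, $w_\lambda\mapsto v$.

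\emph{Step 3: equality of dimensions.} It remains to prove $\dim W(\lambda)\leq\dim V$; together with the surjection this forces an isomorphism. This is the substantive step, and the main obstacle. Here I would invoke the explicit monomial spanning set of $W(\lambda)$ constructed in \cite{CL} (the one whose indexing set is reinterpreted via POPs in \cite{RRV2}): its cardinality equals $\dim V$, the latter being computable from the Demazure character formula. Equivalently, one can compare the graded character of $W(\lambda)$ — obtained through the $\mathfrak{sl}_2$-reductions and a fusion-type argument — with the recursion satisfied by $\operatorname{ch}V$ under Demazure operators. The easy half of the theorem, namely the surjection, is formal once Step 1 is in place; the genuine work — and the part most sensitive to $\lieg=\mathfrak{sl}_{r+1}$ — is precisely this numerical estimate that the Weyl module is no larger than the Demazure module.
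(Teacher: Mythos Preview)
The paper does not give its own proof of this statement: Theorem~\ref{WvsD} is merely quoted from \cite{CL,FoL} and used as a black box, so there is nothing to compare your argument against here. Your sketch is a faithful outline of the standard proof from those references---parabolic stability of the Demazure module via the length condition on $t_{w_0\lambda}$, verification that the extremal vector satisfies the Weyl-module relations (giving the surjection $W(\lambda)\twoheadrightarrow V$), and then the dimension/character comparison coming from the Chari--Loktev spanning set or the Fourier--Littelmann fusion argument.

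One small remark on Step~2: you should also record that $\nplus v=0$ (the $s=0$ case), which is immediate since you chose $v$ to be the highest weight vector of the $\lieg$-submodule generated by the extremal vector; together with your $s\geq 1$ computation this gives $(\nplus\otimes\complex[t])v=0$. Otherwise the outline is sound, and your identification of Step~3 as the nontrivial, type-$A$-sensitive ingredient is exactly right.
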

\subsection{Inclusions of Weyl modules}\label{ss:inclweyl}
Let $\lambda\in P^+$ and
 $\lseq=(\lambda_1\geq\ldots\geq\lambda_r\geq\lambda_{r+1}=0)$ be its  corresponding sequence. Let $i_\lambda\in \widehat{I}$ be the remainder when $\sum_{i=1}^{r+1}\lambda_i$ is divided by $r+1$.
Set $\varpi_0$ as the zero element in $\csa^*$. It is easy to see that $\lambda-\varpi_{i_\lambda}\in Q$. 
  Since $w \Lambda_0=\Lambda_0$ for all $w\in W$, using \eqref{e:trp}, we have
 \begin{equation}
     t_{w_0\lambda}\Lambda_0=t_{w_0(\lambda-\varpiil)} t_{w_0\varpiil}(\Lambda_0)
   =t_{w_0(\lambda-\varpiil)}w_0\sigma_{\il}(\Lambda_0).
  \end{equation}
Thus from Theorem \ref{WvsD}, we get
\beq\label{e:wisoD}W(\lambda)\cong_{\lieg[t]} V_{t_{w_0(\lambda-\varpiil)}w_0}(\Lambda_{\il})\subset L(\Lambda_{\il}).\eeq
For every $k\geq0$, it is important to note that $i_{\lambda+k\theta}=i_\lambda$ and hence 
 $W(\lambda+k\theta)$ is also a Demazure submodule of $L(\Lamil)$.
 
For every $w\in W$, it is well-known that
\[t_{w_0(\lambda-\varpiil)}w\leq t_{w_0(\lambda+\theta-\varpiil)}w\leq \cdots\leq t_{w_0(\lambda+k\theta-\varpiil)}w\leq t_{w_0(\lambda+(k+1)\theta-\varpiil)}w\leq\cdots,\]
where $\leq$ is the  Bruhat order on the affine Weyl group. 
Hence using \eqref{e:wisoD}, we get  a chain of Demazure submodules of $L(\Lamil)$:
\beq\label{e:wmdir}W(\lambda)\hookrightarrow W(\lambda+\theta)\hookrightarrow\cdots\hookrightarrow W(\lambda+k\theta)\hookrightarrow W(\lambda+(k+1)\theta)
\hookrightarrow\cdots\quad(\hookrightarrow L(\Lamil))
\eeq
such that union of the modules in the  chain equals $L(\Lamil)$.
\subsection{Partitions} A {\em partition} is a non-increasing sequence of non-negative integers that is eventually
zero. 
 The non-zero elements of the sequence are called the {\em parts}
 of the partition. If the sum of the parts of a partition $\piseq:\pi_1\geq\pi_2\geq\cdots$ is $m$, then the partition is said to be a {\em partition of} $m$, and we write $|\piseq|=m.$
\subsubsection{Partition fits into a rectangle} Let $d, d^\prime$ be non-negative integers. We say that a partition {\em
 fits into a rectangle} $(d, d^\prime)$, if the number of parts is at most $d$ and every part is at most $d^\prime$. 
 \subsubsection{Complement of a partition} Let $\piseq$ be a partition fits in a rectangle $(d, d')$. The complement $\piseq^c$ of $\piseq$ is  given by 
 $(d'-\pi_d\geq\cdots\geq d'-\pi_1)$. Note that $\piseq^c$ also fits into the rectangle $(d, d').$
\subsection{Colored partitions}\label{ss:coloredp}
Let $r$ be a positive integer. An $r$-{\em colored partition} 
is a partition in which each part is assigned an integer between $1$ and $r$. The number
assigned to a part is its color. We may think of an $r$-colored partition as an ordered $r$-tuple 
$(\piseq^1,\ldots,\piseq^r)$ of partitions: the partition $\piseq^i$ consists of all parts of color $i$ of the $r$-colored partition.
An $r$-colored partition of a non-negative integer $m$ is an $r$-colored partition  $(\piseq^1,\ldots,\piseq^r)$ with $|\piseq^1|+\cdots+|\piseq^r|=m$.
\subsection{Gelfand-Tsetlin  patterns}\label{s:patterns}
A {\em Gelfand-Tsetlin (GT) pattern} (or just {\em pattern}) $\mathcal{P}$ is an array of
integral row vectors $\loneseq, \ldots, \lrseq$, $\lrponeseq$ (where 
${\underline{\lambda}}^j=(\lambda_1^j,\ldots,\lambda_j^j)$): 
\begin{align}
&\quad\qquad\qquad\qquad\qquad\lambda^1_1 \nonumber\\
&\quad\qquad\qquad\qquad\lambda^2_1\qquad\lambda^2_2\nonumber\\
&\quad\qquad\qquad\cdots\qquad\cdots\qquad\cdots\nonumber\\
&\qquad\qquad\lambda^{r}_1\qquad\qquad  \cdots\ \ 
\ \ \qquad\lambda^{r}_{r}\nonumber\\
&\qquad\lambda^{r+1}_1\qquad\lambda^{r+1}_2
\qquad \ \ \cdots \ \quad\qquad\lambda^{r+1}_{r+1}\nonumber
\end{align}
subject to the following conditions:
\[\lambda^{j+1}_i\geq\lambda^{j}_i\geq\lambda^{j+1}_{i+1},\quad\forall\,\,1\leq i\leq j\leq r.\]
The last sequence ${\lseq}^{r+1}$ of the pattern $\mathcal{P}$ is its
 {\em bounding sequence}.    
 \medskip
 
 Fix $\lambda\in P^+$ and a pattern $\pattern:\loneseq, \ldots, \lrseq$, $\lrponeseq=\lseq$ with bounding sequence $\lseq$.
\subsubsection{The weight of a pattern} The {\em weight} $\textup{wt}\,\pattern\in\csa^*$ of $\pattern$ is defined by
$$\textup{wt}\,\pattern:=a_1\varepsilon_1+a_2\varepsilon_2+\cdots+a_{r+1}\varepsilon_{r+1}, \quad\textup{where}\,\,a_j=\sum_{i=1}^{j}\lambda^j_i-\sum_{i=1}^{j-1}\lambda^{j-1}_i.$$
 Note that $a_1+a_2+\cdots+a_{r+1}=\lambda_1+\lambda_2+\cdots+\lambda_{r}.$ 
 \subsubsection{Differences of a pattern}For $1\leq i\leq j\leq r$, the {\em differences} $d_{i,j}(\pattern)$ and $d^\prime_{i,j}(\pattern)$ (or just $d_{i,j}$ and $d^\prime_{i,j}$ if  $\pattern$ is clear from the context)  of $\pattern$ are given by
$$d_{i,j}(\pattern):=\lambda_{i}^{j+1}-\lambda^j_{i} \qquad \textup{and} \qquad d^\prime_{i,j}(\pattern):=\lambda_{i}^j-\lambda^{j+1}_{i+1}.$$ 
\subsubsection{Area of a pattern} The {\em triangular area} or just {\em area} $\triangle(\pattern)$ of $\pattern$ is defined by 
 $$\triangle(\pattern):=\sum_{1\leq i\leq j\leq r} d_{i,j} d^\prime_{i,j}.$$
\subsubsection{Trapezoidal area of a pattern} The {\em trapezoidal area} $\square(\pattern)$ of  $\pattern$ is defined by 
 $$\square(\pattern):=\sum_{1\leq i\leq j\leq r}d_{i,j}(\sum_{p=i}^{j}d^\prime_{p,j}).$$
 \subsubsection{Shift of a pattern}
For $k\in\mathbb{Z}_{\geq0}$, the {\em shift}  $\pattern^k$ of $\pattern$ by $k$ is a pattern with 
bounding sequence $\lseq+k\tseq$: 
suppose that $\eoneseq$, \ldots, $\erseq$, $\erponeseq$ be the row vectors of $\pattern^k$, then
$$\eta^j_i:=\begin{cases} \lambda^j_i+2k, & i=1 \,\,\textup{and}\, \, 1<j\leq r+1,\\
\lambda^j_i, &  1<i=j\leq r+1,\\
\lambda^j_i+k, & \textup{otherwise}.\end{cases}$$
We observe that $$d_{i,j}(\pattern^k)=d_{i,j}(\pattern)+\delta_{i,j}k, \quad d^\prime_{i,j}(\pattern^k)= d^\prime_{i,j}(\pattern)+\delta_{1,i}k, \quad\forall\,\, 1\leq i\leq j\leq r, 
 \quad \textup{and}\quad\textup{wt}\,\pattern^k=\textup{wt}\,\pattern.$$
\subsection{Partition overlaid patterns (POPs)} \label{s:pops}
A {\em partition overlaid pattern (POP)} consists of a GT pattern $\pattern$, and for every pair $(i,j)$ of integers with $1\leq i\leq j\leq r$, 
a partition $\pijiseq$ that fits into the rectangle $(d_{i,j}(\pattern), d^\prime_{i,j}(\pattern))$ (see \cite{RRV2} for more details).
For $\lambda\in P^+$, let $\popset_{\lambda}$ denote the set  of POPs with bounding sequence $\lseq$. 

The {\em bounding sequence, area} $\triangle(\pop)$, {\em trapezoidal area} $\square(\pop)$, {\em weight} $\textup{wt}\,\pop$, and the {\em differences} $d_{i,j}(\pop), d^\prime_{i,j}(\pop)$ (or just $d_{i,j}$ and $d^\prime_{i,j}$ if  $\pop$ is clear from the context), $1\leq i\leq j\leq r)$, of a POP $\pop$ are just the corresponding notions attached to the underlying pattern.
\medskip

Fix $\lambda\in P^+$ and a POP $\pop$ with bounding sequence $\lseq$.  Let
$\loneseq, \ldots, \lrseq$, $\lrponeseq=\lseq$ be the underlying pattern of $\pop$  and $\pijiseq$, $1\leq i\leq j\leq r$, be the partition
overlay. 
\subsubsection{Restriction of a POP}
For $1\leq i\leq j\leq r+1$,  
define  $\underline{\lambda^j_i}:= \lambda^{j}_i, \lambda^{j}_{i+1}, \ldots, \lambda^{j}_{j}$. 
Observe that $\underline{\lambda^j_1}=\ljseq$.
For $s\in I\cup\{r+1\}$, the {\em restriction}  $\pop_s$ or $\textup{res}_s(\pop)$ of $\pop$ to $s$ is a POP  with bounding sequence 
$\underline{{\lambda}^{r+1}_s}$: the row vectors of $\pop_s$ are $\underline{\lambda^s_s}, \underline{\lambda^{s+1}_s}, \ldots, \underline{\lambda^{r+1}_s}$ and $\pijiseq$, $s\leq i\leq j\leq r$, be the partition
overlay. Observe that $\pop_1=\pop$.  
\subsubsection{Depth of a POP}
The {\em depth} $d(\pop)$ of $\pop$ is defined by
$$d(\pop):=\sum_{1\leq i\leq j\leq r} d^j_i(\pop), \qquad\textup{where} \qquad  d^j_i(\pop):= d_{i,j}(\sum_{p=i+1}^{j}d^\prime_{p,j})+|\pijiseq|.$$
 For $s\in I$, we observe that 
\beq\label{e:depthofps}
d(\pop_s)=\sum_{s\leq i\leq j\leq r} d^j_i(\pop)=
d(\pop_{s+1})+
\sum_{j=s}^r{d^j_s(\pop)}=d(\pop_{s+1})+\sum_{j=s+1}^r{d^j_s(\pop)}+|\pissseq|.\eeq
From \cite[Corollary 3.4]{RRV2}, we have the following:
\beq\label{e:maxarea}
\square(\pop)=\triangle(\pop)+d(\pop)-\sum_{1\leq i\leq j\leq r}|\pijiseq|=\frac{1}{2}\big((\lambda|\lambda)-({\textup{wt}}\,\pop|{\textup{wt}}\,\pop)\big).
\eeq
\subsubsection{Shift of a POP}
For $k\in\mathbb{Z}_{\geq0}$, the {\em shift}  $\pop^k$ of $\pop$ by $k$ is a POP with 
bounding sequence $\lseq+k\tseq$: the underlying pattern of $\pop^k$ is $\pattern^k$ and $\pijiseq$, $1\leq i\leq j\leq r$, be the partition
overlay. Note that the underlying partition overlay for $\pop^k$ and $\pop$ is same.
It is easy to observe that 
\beq\label{e:wtd}
\textup{wt}\,\pop^k=\textup{wt}\,\pop \qquad\textup{and}\qquad d(\pop^k)=d(\pop).\eeq
\subsubsection{Shift and then restrict}For $k\in\mathbb{Z}_{\geq0}$ and $s\in I\cup\{r+1\}$, 
set $\pop^k_s:=\textup{res}_s(\pop^k)$.
\subsubsection{Invariant set of a POP} The {\em invariant set}  $\mathcal{I}(\pop)$ of $\pop$ is a set consists of the  partition overlay of $\pop$ and 
the differences of $\pop$ which are invariant under the shift. More precisely,
$$\mathcal{I}(\pop):=\{d_{i,j}(\pop):1\leq i<j\leq r\}\cup\{ d^\prime_{i,j}(\pop):1<i\leq j\leq r\}\cup\{\pijseq^i:1\leq i\leq j\leq r\}.$$
For $s\in I\cup\{r+1\}$, note that 
$$\mathcal{I}(\pop_s)=\{d_{i,j}(\pop):s\leq i<j\leq r\}\cup\{d^\prime_{i,j}(\pop):s<i\leq j\leq r \}\cup\{\pijseq^i:s\leq i\leq j\leq r\}.$$
Set  $$\mathcal{I}^j_s(\pop):=\{ d_{s,j}(\pop), \pijsseq\}\cup\{\dprimeij(\pop):s<i\leq j\},\,\,1\leq s< j\leq r,\quad\textup{and}\quad \mathcal{I}^s_s(\pop):=\{ \underline{\pi(s)}^s\}, \,\, s\in I.$$
Now we have
\beq\label{e:invsetofps}
\mathcal{I}(\pop_s)=\mathcal{I}(\pop_{s+1})\cup\big(\cup_{s\leq j\leq r}\mathcal{I}^j_s(\pop)\big), \quad\forall\,\,s\in I.
\eeq

\section{The main result}
\subsection{The Chari-Loktev bases for local Weyl modules in type A}\label{ss:clbase} 
In this subsection, we recall the bases given by Chari and Loktev  \cite{CL} in terms of POPs (see \cite{RRV2}). 
Fix notation and terminology as in~\S\ref{notn}.

\subsubsection{}\label{sss:cl}   
Let $d$, $d^\prime$ be non-negative integers and $\piseq$ be a partition that fits into
the rectangle $(d,d^\prime)$. For $\alpha\in R^+$, the monomial $x^\pm_{\alpha}(d,\,d',\,\piseq)$ corresponding to the complement of $\piseq$ is given by
$$x^\pm_{\alpha}(d,\,d',\,\piseq):= \big(\prod_{i=1}^{d}   x^\pm_\alpha\otimes t^{d^\prime-\pi_i}\big).$$
Set $x_{i,j}^\pm(d,\,d',\,\piseq):=x_{\alpha_{i,j}}^\pm(d,\,d',\,\piseq)$ for all $1\leq i\leq j\leq r$.
\subsubsection{}
Let $\lambda\in P^+$ and ~$\pop$ be a POP with bounding sequence $\lseq$.  Let
$d_{i,j},\, d'_{i,j}$, $1\leq i\leq j\leq r$,  be the differences  and $\pijiseq$, $1\leq i\leq j\leq r$, be the partition
overlay of $\pop$.    
Define $\rhocl{\pop}\in \mathbf{U}(\mathfrak{n}^-\otimes\complex[t])$ as follows:   
\begin{equation}\label{e:clmonom} \rhocl{\pop}:=
x_{1,1}^-(d_{1,1}, \,d^\prime_{1,1},\, \pijiseqone)\,
\big(\prod_{i=1}^2 x_{i,2}^-(d_{i,2},\, d^\prime_{i,2}, \,\pijiseqtwo)\big)\,\cdots\,
\big(\prod_{i=1}^r x_{i,r}^-(d_{i,r}, \,d^\prime_{i,r},\, \pijiseqr)\big).
\end{equation}
The order of the factors matters in the expression for $\rhocl{\pop}$. 
Since $[x^-_{i,j}, x^-_{p,q}]=0, \forall\,\,1\leq i\leq p\leq q\leq j\leq r$, it is easy to see that
\begin{equation}\label{e:clmonom2}
 \rhocl{\pop}=\big(\prod_{j=1}^r x_{1,j}^-(d_{1,j}, \, d^\prime_{1,j},\, \pijiseqoner)\big)\,
\big(\prod_{j=2}^r x_{{2}, j}^-(d_{2, j}, \, d^\prime_{2, j}, \,\underline{\pi(j)}^2)\big)\,\cdots\,
x_{r,r}^-(d_{r,r}, \, d^\prime_{r,r},\, \pijiseqrr).
\end{equation} Set $\rho_{\pop_{r+1}}:=1.$
We observe that $$\rho_{\pop_s}=\big(\prod_{j=s}^r x_{{s}, j}^-(d_{s, j}, \, d^\prime_{s, j},\, \pijsseq)\big)\,\rho_{\pop_{s+1}},\quad\,\,\forall \,\,s\in I.$$
Define  $\vcl{\pop}:=\epsilon_\pop\,\rhocl{\pop} \,w_\lambda,$ where $\epsilon_\pop\in\{\pm1\}$ is defined in \S\ref{ss:MTp}.

The following theorem is proved in \cite{CL} (see \cite[Theorem 4.5]{RRV2} for the current formulation).
 \begin{theorem}\label{t:cl}\cite{CL,RRV2}
   The elements
$v_\pop$,   $\pop$ belongs to the set $\popset_\lambda$ of POPs with
bounding sequence~$\lseq$,  form a basis for the local Weyl module~$W(\lambda)$.
 \end{theorem}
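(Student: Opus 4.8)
The plan is to follow the strategy of Chari--Loktev, splitting the assertion into a spanning statement and a matching dimension count. First, the defining relations of $W(\lambda)$ show that $\nplus\otimes\complex[t]$ and $\csa\otimes t\complex[t]$ annihilate $w_\lambda$ and $\csa$ acts through the character $\lambda$, so $W(\lambda)=\mathbf{U}(\nminus\otimes\complex[t])\,w_\lambda$; by PBW it is therefore spanned by the vectors $X\,w_\lambda$ with $X$ an ordered monomial in the root vectors $x_{i,j}^-\otimes t^s$. The core of the proof is a \emph{straightening procedure}: using the Garland-type consequences of $f_i^{\langle\lambda,\alpha_i^\vee\rangle+1}w_\lambda=0$, the relation $(\nplus\otimes t\complex[t])w_\lambda=0$, and the commutation relations of $\lieg[t]$, one rewrites each $X\,w_\lambda$ as a $\complex$-linear combination of the vectors $\rhocl{\pop}w_\lambda$ of \eqref{e:clmonom}. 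Processing the root vectors by colour $j$ and, within a colour, by the index $i$, the $t$-exponents get trapped inside the rectangle $(d_{i,j},d'_{i,j})$, and the partition overlay $\pijiseq$ is exactly the bookkeeping that records which $t$-powers survive all the reductions. This gives $\dim W(\lambda)\le|\popset_\lambda|$.

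For the reverse inequality I would compute both quantities. Combinatorially, a bijective or generating-function argument yields $|\popset_\lambda|=\prod_{i\in I}\binom{r+1}{i}^{m_i}$ for $\lambda=\sum_{i\in I}m_i\omega_i$: dropping the overlay recovers the GT-pattern count $\dim V(\lambda)$, and summing the overlay contributions rebuilds the tensor-product count. On the representation side one has $\dim W(\lambda)\ge\prod_{i\in I}\binom{r+1}{i}^{m_i}$; this can be extracted from the global Weyl module (Chari--Pressley), whose generic specialization is $\bigotimes_{i\in I}V(\omega_i)^{\otimes m_i}$, or, invoking Theorem~\ref{WvsD}, from the realization of $W(\lambda)$ as a $\lieg[t]$-stable Demazure module in $L(\Lambda_0)$. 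Comparing the two bounds forces equality, so the spanning set $\{v_\pop:\pop\in\popset_\lambda\}$---which differs from $\{\rhocl{\pop}w_\lambda\}$ only by the signs $\epsilon_\pop$---is a basis.

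I expect the straightening step to be the main obstacle: one must check that the reductions terminate, that they never produce a monomial outside the POP family, and, most importantly, that the procedure returns exactly $|\popset_\lambda|$ monomials rather than a strictly larger over-estimate. This is why the independent lower bound on $\dim W(\lambda)$ is indispensable and why the notion of a partition ``fitting into the rectangle $(d_{i,j},d'_{i,j})$'' is calibrated precisely as it is: it is the combinatorial shadow of the module relations, and the argument succeeds only because that calibration is tight. Once spanning and the dimension equality are in hand, linear independence is automatic. (Alternatively one could bypass the count by matching the images of the $v_\pop$ with a Demazure crystal basis of $V_{t_{w_0\lambda}}(\Lambda_0)$, but identifying that crystal with $\popset_\lambda$ is itself the substance of the statement.)
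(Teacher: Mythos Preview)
The paper does not supply its own proof of this theorem: it is quoted from \cite{CL} (with the POP reformulation from \cite{RRV2}), and immediately after the statement the author simply writes ``The following theorem is proved in \cite{CL} (see \cite[Theorem~4.5]{RRV2} for the current formulation).'' So there is no in-paper argument to compare your proposal against.

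That said, your sketch is an accurate high-level summary of the Chari--Loktev strategy: prove spanning by a filtration/straightening argument that confines the $t$-exponents to the rectangles $(d_{i,j},d'_{i,j})$, and then match $|\popset_\lambda|$ with an independently obtained lower bound $\prod_i\binom{r+1}{i}^{m_i}$ for $\dim W(\lambda)$. Your caveats about where the real work lies---termination of the straightening, tightness of the rectangle calibration, and the need for the external lower bound---are exactly the points \cite{CL} has to address. The only thing to flag is that what you call the ``straightening procedure'' is, in \cite{CL}, implemented via an explicit filtration of $W(\lambda)$ and successive-quotient arguments rather than a direct monomial rewriting; functionally this is the same idea, but if you were to write out the details you would want to follow that filtration set-up.
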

We shall call the bases given in the last theorem as the {\em Chari-Loktev (or CL)} bases.

\subsection{The main theorem: stability of the CL bases} We wish to study for $\lambda\in P^+$ and $k\in\mathbb{Z}_{\geq0}$, the compatibility of CL bases with respect to the embeddings 
$W(\lambda)\hookrightarrow W(\lambda+k\theta)$ in $L(\Lambda_{i_\lambda})$ (see \S \ref{ss:inclweyl}). 
We first recall the weight preserving embedding from $\pnotl$ into $\pkl$ given in \cite[Corollary~5.13]{RRV2} at the level of the parametrizing sets of these bases:
for ~$\pop\in\pnotl$, the shift $\pop^k$ of $\pop$ by $k$ be its image in $\pkl$.

For every $\lambda\in P^+$, we will fix the following choice of $w_\lambda$ in $L(\Lambda_{i_\lambda})$:
$$w_\lambda:=T_{\lambda}\,v_{\Lambda_0},$$
where $T_\lambda$ is the linear isomorphism from $L(\Lambda_0)\rightarrow L(\Lambda_{i_\lambda})$ defined in \S \ref{ss:tlamdef}.
\begin{lemma}\label{l:wt}
 Let $\lambda\in P^+$ and $\pop\in\pnotl$.
 Then
 $\textup {the weight of}\,\, \vcl{\pop} \,\,\textup{in} \,\,L(\Lambda_{i_\lambda})\,\, \textup{is}$ $$t_{\textup{wt}\,\pop-\barLam}(\Lambda_{i_\lambda})-d(\pop)\delta, $$ 
 where $\barLam$ denotes the restriction of $\Lamil$ to $\lieh$ . 
\end{lemma}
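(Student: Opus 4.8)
The plan is to split $\vcl{\pop}=\epsilon_\pop\,\rhocl{\pop}\,w_\lambda$ and read the weight off as (the weight of $w_\lambda$ in $L(\Lamil)$) plus (the weight shift produced by the monomial $\rhocl{\pop}$); the sign $\epsilon_\pop\in\{\pm1\}$ is irrelevant, so $\wt(\vcl{\pop})=\wt(w_\lambda)+\wt(\rhocl{\pop})$, where $\wt(\rhocl{\pop})$ denotes the weight of $\rhocl{\pop}$ as a $\csaf$-weight vector in $\mathbf{U}(\widehat{\lieg})$. The first task is to pin down $\wt(w_\lambda)$: its $\csa$-weight is $\lambda$ by the defining relations of $W(\lambda)$, its level is $1$, and — either from the construction of $T_\lambda$ in \S\ref{ss:tlamdef}, or directly from the realization $W(\lambda)\cong V_{t_{w_0(\lambda-\varpiil)}w_0}(\Lamil)\subset L(\Lamil)$ of \eqref{e:wisoD}, where $w_\lambda$ appears as the $\lieg$-highest weight vector in the $\lieg$-orbit of the extremal vector and hence sits at the same $\delta$-level as that extremal vector (as $[d,\lieg\otimes 1]=0$) — one gets $\wt(w_\lambda)=t_{\lambda-\barLam}(\Lamil)$, i.e. $\Lambda_0+\lambda-\tfrac12(\lambda|\lambda)\delta$. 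This is exactly the assertion of the lemma for the ``maximal'' POP, whose underlying pattern has $\lambda^j_i=\lambda_i$ for all $i\le j$ and all overlays empty, so that $\rhocl{\pop}=1$, $\wt\pop=\lambda$ and $d(\pop)=0$.

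Next I would compute the shift. Each factor occurring in $\rhocl{\pop}$ has the form $x^-_{i,j}\otimes t^s=x_{-\alpha_{i,j}+s\delta}$, a $\csaf$-weight vector of weight $-\alpha_{i,j}+s\delta$ (using $[d,x\otimes t^s]=s\,(x\otimes t^s)$ and the centrality of $c$). Hence, reading off the exponents from \eqref{e:clmonom} and from $x^-_{\alpha}(d,d',\piseq)=\prod_{i=1}^{d}x^-_{\alpha}\otimes t^{d'-\pi_i}$, I obtain
\[
\wt(\rhocl{\pop})=-\sum_{1\le i\le j\le r}d_{i,j}\,\alpha_{i,j}+N\delta,\qquad N=\sum_{1\le i\le j\le r}\bigl(d_{i,j}d'_{i,j}-|\pijiseq|\bigr)=\triangle(\pop)-\sum_{1\le i\le j\le r}|\pijiseq|,
\]
where $N$ is the total $t$-degree of $\rhocl{\pop}$; by \eqref{e:maxarea} this equals $N=\square(\pop)-d(\pop)$.

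It then remains to verify $t_{\lambda-\barLam}(\Lamil)-\sum_{i\le j}d_{i,j}\alpha_{i,j}+N\delta=t_{\wt\pop-\barLam}(\Lamil)-d(\pop)\delta$. For the $\csa$-component I would check the telescoping identity $\lambda-\wt\pop=\sum_{1\le i\le j\le r}d_{i,j}\alpha_{i,j}$ directly from $\wt\pop=\sum_j a_j\varepsilon_j$, $d_{i,j}=\lambda^{j+1}_i-\lambda^j_i$ and $\alpha_{i,j}=\varepsilon_i-\varepsilon_{j+1}$: the coefficient of $\varepsilon_k$ on the right-hand side telescopes to $\lambda_k-a_k$. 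For the remaining $\delta$-component, the explicit formula for $t_\gamma$ on a level-one weight gives $t_{\lambda-\barLam}(\Lamil)-t_{\wt\pop-\barLam}(\Lamil)=(\lambda-\wt\pop)-\tfrac12\bigl((\lambda|\lambda)-(\wt\pop|\wt\pop)\bigr)\delta=(\lambda-\wt\pop)-\square(\pop)\delta$, once more by \eqref{e:maxarea}. Substituting these two facts, the $\csa$-parts cancel against $-\sum_{i\le j}d_{i,j}\alpha_{i,j}$ and the $\delta$-parts cancel against $N\delta=(\square(\pop)-d(\pop))\delta$, which proves the lemma.

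No step is deep here: the argument rests on two inputs being in place — the value of $\wt(w_\lambda)$ (the maximal-POP base case, from the construction of $T_\lambda$ in \S\ref{ss:tlamdef}) and the area identity \eqref{e:maxarea} of \cite{RRV2} — and the only delicate point is the $\delta$-bookkeeping, where the half-norms $\tfrac12(\cdot|\cdot)$ must be tracked carefully so as not to introduce a spurious sign or factor.
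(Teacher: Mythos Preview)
Your proof is correct and follows essentially the same route as the paper's: write the weight of $v_\pop$ as the weight $t_\lambda(\Lambda_0)=t_{\lambda-\barLam}(\Lamil)$ of $w_\lambda$ plus the shift $-\sum_{i\le j}d_{i,j}\alpha_{i,j}+\bigl(\triangle(\pop)-\sum|\pijiseq|\bigr)\delta$ contributed by $\rhocl{\pop}$, and then invoke \eqref{e:maxarea} for the $\delta$-bookkeeping. The paper carries this out slightly more tersely, leaving the telescoping identity $\lambda-\wt\pop=\sum_{i\le j}d_{i,j}\alpha_{i,j}$ implicit, whereas you verify it explicitly; otherwise the arguments coincide.
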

\begin{proof}It is clear from the definition of $\vcl{\pop}$ that  its weight in $L(\Lambda_{i_\lambda})$ is 
\begin{align}
&t_{\lambda}(\Lambda_0)-\sum_{1\leq i\leq j\leq r}d_{i,j}\alpha_{i,j}+\big(\triangle(\pop)-\sum_{1\leq i\leq j\leq r}|\pijiseq|\big)\delta\nonumber\\
&=\Lambda_0+{\textup{wt}}\,\pop-\big(\frac{1}{2}(\lambda|\lambda)-\triangle(\pop)+\sum_{1\leq i\leq j\leq r}|\pijiseq|\big)\delta\nonumber\\
&=\Lambda_0+\textup{wt}\,\pop-\big(\frac{1}{2}({\textup{wt}}\,\pop|{\textup{wt}}\,\pop)+d(\pop)\big)\delta \label{e:lemwt1}
\end{align}
where the last equality follows from \eqref{e:maxarea}.
 Since $\Lambda_{i_\lambda}$ is of level~$1$,  we obtain using \cite[(6.5.3)]{K} that
\beq\label{e:tgL2}
t_{\textup{wt}\,\pop-\barLam}(\Lamil)=\Lambda_0+{\textup{wt}}\,\pop+\frac{1}{2}((\Lamil|\Lamil)-({\textup{wt}}\,\pop|{\textup{wt}}\,\pop))\delta.
\eeq
Since $(\Lamil|\Lamil)=0$, we get the result from \eqref{e:lemwt1}--\eqref{e:tgL2}.
\end{proof}
The following is immediate from Lemma \ref{l:wt} and \eqref{e:wtd}.
\begin{lemma}Let $\lambda\in P^+$, $\pop\in\pnotl$, and $k\in\mathbb{Z}_{\geq0}$. Then the basis vectors $\vcl{\pop}\in W(\lambda)$ and  $\vcl{\pop^k}\in W(\lambda+k\theta)$ lie in the same weight space of $L(\Lambda_{i_\lambda})$.
\end{lemma}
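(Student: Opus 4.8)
The plan is to deduce the statement directly from Lemma~\ref{l:wt} by applying it twice and substituting. First I would invoke Lemma~\ref{l:wt} with the given data $\lambda\in P^+$, $\pop\in\pnotl$, obtaining that the weight of $\vcl{\pop}$ in $L(\Lambda_{i_\lambda})$ equals $t_{\textup{wt}\,\pop-\barLam}(\Lambda_{i_\lambda})-d(\pop)\delta$, where $\barLam$ is the restriction of $\Lambda_{i_\lambda}$ to $\lieh$.

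Next I would apply Lemma~\ref{l:wt} a second time, now with $\lambda$ replaced by $\lambda+k\theta$ and $\pop$ replaced by $\pop^k$. This is legitimate because $\lambda+k\theta\in P^+$ and, by the construction recalled just before Lemma~\ref{l:wt} (see \cite[Corollary~5.13]{RRV2}), the shift carries $\pnotl$ into $\pkl$, so $\pop^k$ is indeed a POP with bounding sequence $\lseq+k\tseq$. The lemma then gives that the weight of $\vcl{\pop^k}$ in $L(\Lambda_{i_{\lambda+k\theta}})$ is $t_{\textup{wt}\,\pop^k-\overline{\Lambda}_{i_{\lambda+k\theta}}}(\Lambda_{i_{\lambda+k\theta}})-d(\pop^k)\delta$, where $\overline{\Lambda}_{i_{\lambda+k\theta}}$ is the restriction of $\Lambda_{i_{\lambda+k\theta}}$ to $\lieh$.

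Finally I would combine this with the two facts that make the expressions collapse. On the one hand, as recorded in \S\ref{ss:inclweyl}, $i_{\lambda+k\theta}=i_\lambda$; hence both vectors live in the same module $L(\Lambda_{i_\lambda})$ and $\overline{\Lambda}_{i_{\lambda+k\theta}}=\barLam$. On the other hand, \eqref{e:wtd} gives $\textup{wt}\,\pop^k=\textup{wt}\,\pop$ and $d(\pop^k)=d(\pop)$. Substituting these equalities, the weight of $\vcl{\pop^k}$ becomes $t_{\textup{wt}\,\pop-\barLam}(\Lambda_{i_\lambda})-d(\pop)\delta$, which is exactly the weight of $\vcl{\pop}$ computed above. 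There is no real obstacle here: once Lemma~\ref{l:wt} and \eqref{e:wtd} are in hand the statement is a direct substitution, and the only point meriting a word of care is checking that Lemma~\ref{l:wt} genuinely applies to $\pop^k$ as an element of $\pkl$, which the remark preceding it supplies.
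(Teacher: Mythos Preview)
Your proposal is correct and follows exactly the approach the paper indicates: the paper states that the lemma is immediate from Lemma~\ref{l:wt} and \eqref{e:wtd}, and your argument spells out precisely this, applying Lemma~\ref{l:wt} to both $\pop$ and $\pop^k$ and then invoking $i_{\lambda+k\theta}=i_\lambda$ together with \eqref{e:wtd}.
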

It is not true that $\vcl{\pop}$ and $\vcl{\pop^k}$ are equal as elements of $L(\Lambda_{i_\lambda})$ (see \cite[Example 1]{RRV1}).
We will however see below that $\vcl{\pop}=\vcl{\pop^k}$ for all {\em stable} $\pop$. More precisely, let
$$\mathbb{P}^{\textup{stab}}(\lambda):=\{\pop\in\pnotl:d_{\ell,\ell}(\pop)\geq\depthpl,\,\, \forall \,\,1\leq \ell\leq r\} \,\,(\textup{see}\,\,\S\S\ref{s:patterns}-\ref{s:pops}).$$

The following theorem is the main result of this paper. 
\begin{theorem}\label{MT} Let $\lieg=\mathfrak{sl}_{r+1}$. 
Let $\lambda\in P^+$ and $\pop\in\mathbb{P}^{\textup{stab}}(\lambda)$. Then
 $$\vcl{\pop^k}=\vcl{\pop} \qquad\textup{for all}\,\, k\in\mathbb{Z}_{\geq0},$$ 
  i.e., they are equal as elements of $L(\Lambda_{i_\lambda})$.
 \end{theorem}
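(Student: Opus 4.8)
The plan is to reduce everything to a single shift and then to establish one explicit monomial identity in $L(\Lamil)$. First I would check that $\mathbb{P}^{\textup{stab}}(\lambda)$ is preserved by shifting: the shift affects only the diagonal differences $d_{i,i}$ and the differences $d'_{1,j}$ (each gets $+k$), and by inspection of the depth formula (cf.\ \eqref{e:depthofps}) none of these contributes to $d(\pop_\ell)$ for $\ell\ge1$, so $d(\pop^k_\ell)=d(\pop_\ell)$ and hence $d_{\ell,\ell}(\pop^k)=d_{\ell,\ell}(\pop)+k\ge d(\pop_\ell)=d(\pop^k_\ell)$, i.e.\ $\pop^k\in\mathbb{P}^{\textup{stab}}(\lambda+k\theta)$. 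Since $(\pop^k)^1=\pop^{k+1}$ and $i_{\lambda+k\theta}=i_\lambda$ (so everything happens in the fixed module $L(\Lamil)$), an induction on $k$ reduces Theorem~\ref{MT} to the assertion that $\vcl{\pop^1}=\vcl{\pop}$ for every $\lambda\in P^+$ and every $\pop\in\mathbb{P}^{\textup{stab}}(\lambda)$.

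Next I would locate $w_\lambda$ and $w_{\lambda+\theta}$ in $L(\Lamil)$. By Lemma~\ref{l:wt} applied to the trivial POP, $w_\lambda$ is the extremal weight vector of $L(\Lamil)$ of weight $t_{\lambda-\barLam}(\Lamil)$; in particular $f_0w_\lambda=0$ (for $\lambda\neq0$, since then $\langle t_{\lambda-\barLam}(\Lamil),\alpha_0^\vee\rangle=1-\langle\lambda,\theta^\vee\rangle\le0$), in addition to the defining relations $(\nplus\otimes t\complex[t])w_\lambda=0$, $(h\otimes t^s)w_\lambda=0$ for $s\ge1$, and $f_i^{\langle\lambda,\alpha_i^\vee\rangle+1}w_\lambda=0$. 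Using the construction of $T_\lambda$ from \S\ref{ss:tlamdef} I would write $w_{\lambda+\theta}=c_\lambda\,\Theta\,w_\lambda$, with $c_\lambda\in\complex^\times$ and $\Theta$ an explicit ordered monomial in the Chevalley generators $f_0,\ldots,f_r$ that realises the translation by $\theta$ on the extremal vector $w_\lambda$ (for $\sltwo$ this is $w_{\lambda+\theta}=c_\lambda f_0^{\lambda_1+1}f_1^{\lambda_1}w_\lambda$, cf.\ \cite{RRV1}), organised block by block so that its ``level-$s$ part'' is matched to the factor $\prod_{j\ge s}x_{s,j}^-(d_{s,j},d'_{s,j},\pijsseq)$ that appears in the recursion $\rhocl{\pop_s}=\big(\prod_{j=s}^{r}x_{s,j}^-(d_{s,j},d'_{s,j},\pijsseq)\big)\rhocl{\pop_{s+1}}$. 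After comparing the signs $\epsilon_{\pop^1}=\epsilon_\pop$ from \S\ref{ss:MTp}, the remaining assertion becomes the identity
\[
\rhocl{\pop^1}\,\Theta\,w_\lambda=c_\lambda^{-1}\,\rhocl{\pop}\,w_\lambda\qquad\text{in }L(\Lamil).
\]

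I would prove this identity by peeling off the blocks of $\rhocl{\pop}$ along the restriction filtration $\pop=\pop_1\supset\pop_2\supset\cdots\supset\pop_{r+1}$: commute the level-$1$ block of $\rhocl{\pop^1}$ past the level-$1$ part of $\Theta$, then down past $\rhocl{\pop_2}$ to $w_\lambda$, evaluating brackets with the affine commutation rules and using the relations above to kill every monomial that is not wanted, and then iterate on $\rhocl{\pop_2}$. The stability inequality $d_{s,s}(\pop)\ge d(\pop_s)$ enters precisely here, at the $s$-th stage: it is the exact bound ensuring that the ``excess'' lowering factors produced in the rearrangement suffice to rebuild the $s$-th block of $\rhocl{\pop}$ and no more, so that the residual monomials either feed into a lower block or are annihilated by $f_i^{N+1}w_\lambda=0$ or $(\nplus\otimes t\complex[t])w_\lambda=0$. (As an alternative to raw commutator bookkeeping, one could first transport the relevant weight to $\Lamil-d(\pop)\delta$ by a Weyl-group lift and compare the two sides through a principal-subspace description of that weight space in the spirit of Theorem~\ref{weights}, reducing the claim to an identity among commuting operators.)

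The hard part, which is essentially the whole content of the paper, is this collapsing step: tracking precisely which monomials survive as $\rhocl{\pop^1}$ is pushed through $\Theta$, showing that the survivors assemble into $\rhocl{\pop}$, and checking that the vanishing threshold at each level is governed by $d_{s,s}(\pop)\ge d(\pop_s)$ and by nothing weaker. The reductions above are formal by comparison.
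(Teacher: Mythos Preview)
Your reduction to $k=1$ is valid but is not the paper's route, and the hard step you flag is exactly where you are missing the paper's key machinery.

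The paper does \emph{not} write $w_{\lambda+\theta}=c_\lambda\Theta w_\lambda$ for a Chevalley monomial $\Theta$ and then commute $\rho_{\pop^1}$ through $\Theta$. Instead it works uniformly in $k$ via the Frenkel--Kac translation operators: since $w_{\lambda+k\theta}=T_{\lambda+k\theta}v_{\Lambda_0}$ and these operators satisfy the exact conjugation law $T_{-\mu}(x^-_\alpha\otimes t^s)T_\mu=x^-_\alpha\otimes t^{s-(\mu|\alpha)}$ (Proposition~\ref{p:tlambdambeta}), one can pull each factor of $\rho_{\pop^k}$ through a $T_\mu$ \emph{with no commutator error terms at all}. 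The descending induction on $s$ (Theorem~\ref{MTp}) then rewrites $\epsilon_{\pop^k_s}\rho_{\pop^k_s}\,w_{\lambda+k\theta}$ as $T_\nu\,f_{\mathcal{I}(\pop_s)}v_{\Lambda_0}$, where $f$ is a polynomial in the commuting elements $\alpha_i^\vee t^{-j}$ depending only on the shift-invariant data $\mathcal{I}(\pop_s)$; at $s=1$ this is manifestly independent of $k$, which proves Theorem~\ref{MT} for every $k$ simultaneously. The stability inequality $d_{s,s}\ge d(\pop_s)$ enters only at the diagonal factor $x^-_{s,s}$, and there it is fed into the $\mathfrak{sl}_2$ stability result of \cite{RRV1} (Theorem~\ref{stabsl2}, packaged as Proposition~\ref{crucprop}); this is the essential analytic input and you do not invoke it.

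Ironically, your parenthetical ``alternative'' --- transport to the weight space $\Lambda_0-d(\pop)\delta$ and compare through Theorem~\ref{weights} --- is the paper's actual strategy, not a side option. By contrast, pushing $\rho_{\pop^1}$ through an explicit monomial $\Theta$ in $f_0,\ldots,f_r$ and hoping the unwanted terms die against Weyl-module relations has no evident organizing principle for the exponentially many cross terms; without the translation-operator conjugation (or an equivalent device) this is unlikely to close. A smaller correction: the claim $\epsilon_{\pop^1}=\epsilon_\pop$ is false in general, since the factor $(-1)^{[(d_{p,p}+k)/2]}$ in \S\ref{ss:MTp} genuinely depends on $k$; an additional $\pop$-dependent sign would have to be carried through your displayed identity.
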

This theorem is proved in \S\ref{pf:MT}.
\begin{remark}
 Thorem \ref{MT} is conjectured in \cite[Conjecture~6.1]{RRV2} and the $r=1$ case is proved in \cite[Theorem~6]{RRV1} under the additional assumption that 
 $$d(\pop)\leq\begin{cases}
                \textup{min}\{d_{1,1}(\pop),\, d^\prime_{1,1}(\pop)\}, &\lambda_1 \,\,\textup{even},\\
                \textup{min}\{d_{1,1}(\pop),\, d^\prime_{1,1}(\pop)-1\}, &\lambda_1\,\, \textup{odd}.
                               \end{cases}$$
\end{remark}
\subsection{Bases for level one representations of  $\widehat{\lieg}$}\label{ss:basesforllami}
Fix $i\in\widehat{I}$, $\gamma\in Q$, and $d\in\mathbb{Z}_{\geq0}$. Consider the irreducible module $L(\Lambda_i)$ and its weight space of weight
 $t_{\gamma}(\Lambda_i)-d\delta$.
Set $\mu=\varpi_i+\gamma$, the restriction of  $t_{\gamma}(\Lambda_i)-d\delta$ to $\csa^*$.
Let $\lambda\in P^+$ such that $\mu$ is a weight of the corresponding irreducible representation $V(\lambda)$ of $\lieg$.
Note that $i_\lambda=i$. 

For $k\in\mathbb{Z}_{\geq0}$, from Lemma \ref{l:wt},  we get that
the CL basis indexing set for $W(\lambda+k\theta)_{t_{\gamma}(\Lambda_i)-d\delta}$ is the set $\plmkd$ of POPs with
bounding sequence $\lseq+k\tseq$ with weight $\mu$ and depth $d$.  
From \cite[Theorem~5.10]{RRV2}, for $k\geq d$, there exist a bijection from the set $\rpartd$ of all $r$-colored
partitions of $d$  onto $\plmkd$. Since this bijection is produced by the ``shift by $k$'' operator, we have
\beq\label{e:cruforstabbasis}
d_{\ell,\ell}(\pop)\geq k,\quad\forall\,\, 1\leq \ell\leq r, \qquad \textup{for every}\,\,\pop\in\plmkd.
\eeq
For $k\geq d$, 
 by Proposition \ref{wtsbasicrep}, we now have
$$W(\lambda+k\theta)_{t_{\gamma}(\Lambda_i)-d\delta}= L(\Lambda_i)_{t_{\gamma}(\Lambda_i)-d\delta},$$
and the set $\mathcal{B}_{\gamma, d}:=\{v_\pop:\pop\in\plmkd\}$ is a basis for $L(\Lambda_i)_{t_{\gamma}(\Lambda_i)-d\delta}$.
By Theorem \ref{MT}, using \eqref{e:cruforstabbasis}, the set $\mathcal{B}_{\gamma, d}$ is independent of the choice of $k$  for any  $k\geq d$. 

Finally, to obtain a basis for $L(\Lambda_i)$, we take the disjoint union over the weights of $L(\Lambda_i)$:
$$\mathcal{B}:=\bigsqcup_{\gamma, d}\mathcal{B}_{\gamma, d}.$$
We may view $\mathcal{B}$ as a direct limit of the CL bases for the Demazure submodules of $L(\Lambda_i)$.

\section{Proof of the main result}\label{s:proof}
\subsection{Frenkel-Kac translation operators}
 We recall the necessary facts from \cite{FK}.
 Let $(V,\pi)$ be an integrable representation of $\afsl$ with weight space decomposition $V = \oplus_{\nu \in \csaf^*} V_{\nu}$. 
 For a real root $\gamma = \alpha + s \delta \;(\alpha \in R,\, s \in \integers)$ of $\afsl$, we define
 \begin{equation}
 r^\pi_{\gamma}:=e^{-\pi(x_{\gamma})} e^{\pi(x_{-\gamma})} e^{-\pi(x_{\gamma})}.
 \end{equation}
The operator  $r^\pi_{\gamma}$ is a linear automorphism of $V$ such that $r^\pi_{\gamma}(V_{\nu})=V_{s_{\gamma}(\nu)}$, where $s_\gamma \in 
\widehat{W}$ is the reflection defined by $\gamma$. 
Given $w\in \widehat{W}$ and its reduced expression $w=s_{i_1}s_{i_2}\cdots s_{i_q}$, define
$$r^\pi_w:=r^\pi_{\alpha_{i_1}}r^\pi_{\alpha_{i_2}}\cdots r^\pi_{\alpha_{i_q}}.$$
Note that $r^\pi_w(V_\nu)=V_{w\nu}$.

For each $\beta\in Q$, there exists a translation operator $T^\pi_\beta$ on $V$ such that
 \begin{align*}
  T^\pi_{\alpha}=r_{\delta-\alpha}\,r_{\alpha},\quad\alpha\in R\qquad {and}\qquad
  T^\pi_\beta\,T^\pi_{\beta^\prime}=\epsilon(\beta,\,\beta^\prime) \,T^\pi_{\beta+\beta^\prime},\quad\beta,\beta^\prime\in Q,
 \end{align*}
where $\epsilon$ is a 2-cocycle of $Q$ with values in $\{\pm1\}$ (see \cite[\S 2.3]{FK}).
These operators satisfy $T^\pi_\beta(V_\nu) = V_{t_\beta(\nu)}$ for all $\nu \in \csaf^*$, $\beta \in Q$.

We will only need these operators in two cases, namely when $(V, \pi)$ is either the adjoint representation 
or the basic representation of $\afsl$. We note that $T_\beta^{\mathrm{ad}}$ is in fact a 
Lie algebra automorphism of $\afsl$. For ease of notation, we will denote the translation operators corresponding to the basic representation simply by $T_\beta$, 
suppressing the $\pi$ in the superscript.

The key properties of the translation operators are given in \cite[Propositions 1.2 and 2.3]{FK}.
We summarize them for our context below:
\begin{proposition}\label{fkprop}\cite{FK} Let $\mu\in Q$.  Then
\begin{enumerate}
\item\label{fkprop:p3}
 $T_{\mu}\,T_{-\mu}=\textup{id}_{L(\Lambda_0)}.$
 \smallskip
 \item\label{fkprop:p2} $T_{\mu-d\alpha}\, T_{d\alpha}=\epsilon(\mu-d\alpha,\,d\alpha)\, T_{\mu},\quad\forall \,\,\alpha\in R^+,\, d\in \mathbb{Z}_{\geq0}$.
 \smallskip
 \item $T_\mu\,X\,T_{-\mu}\,v=T^{\textup{ad}}_\mu(X)\,v,\quad\forall\,\,X\in\afsl, \,v\in L(\Lambda_0).$
 \smallskip
 \item \label{fkprop:p4} $T_{\mu}^{\textup{ad}}(x^-_{\alpha}\otimes t^s)=(x^-_{\alpha}\otimes t^{s+(\mu|\alpha)}),\quad\forall\,\,\alpha\in R^+,\, \, s\in \mathbb{Z}.$
 \smallskip
 \item \label{fkprop:p1}$T_{\mu}\,(h\otimes t^s)\,v=T^{\textup{ad}}_\mu(h\otimes t^s)\,T_\mu\,v=
(h\otimes t^s)\,T_{\mu}\, v, \quad\forall\,\,h\in\csa,\, v\in L(\Lambda_0), \,s\in \mathbb{Z} \setminus \{0\}$.
\end{enumerate}
   \end{proposition}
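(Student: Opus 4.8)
The engine for all five assertions is the conjugation identity valid in any integrable representation $(V,\pi)$ of $\afsl$: if $Y\in\afsl$ has $\textup{ad}(Y)$ locally nilpotent, then $e^{\pi(Y)}\,\pi(X)\,e^{-\pi(Y)}=\pi\big(e^{\textup{ad}\,Y}(X)\big)$ for all $X\in\afsl$. Real root vectors $x_{\pm\gamma}$ are ad-locally-nilpotent, so applying this to the three exponentials in the definition of $r^\pi_\gamma$ gives $r^\pi_\gamma\,\pi(X)\,(r^\pi_\gamma)^{-1}=\pi\big(r^{\textup{ad}}_\gamma(X)\big)$, where $r^{\textup{ad}}_\gamma$ is the same operator built in the adjoint representation. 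Composing, any product $P$ of reflection operators satisfies $P\,\pi(X)\,P^{-1}=\pi(\overline{P}(X))$ with $\overline{P}$ the corresponding product of adjoint reflections. Now $T_\mu$ agrees with such a product $P$ up to a central scalar (coming from the cocycle) that cancels under conjugation, while in the adjoint representation the cocycle scalars are absent and $\overline{P}$ is exactly the automorphism $T^{\textup{ad}}_\mu$. Hence $T_\mu\,\pi(X)\,T_\mu^{-1}=\pi\big(T^{\textup{ad}}_\mu(X)\big)$; granting $T_\mu^{-1}=T_{-\mu}$, this is the third assertion.

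The multiplicative assertions are cocycle bookkeeping. The second assertion is the cocycle relation $T_\beta T_{\beta'}=\epsilon(\beta,\beta')T_{\beta+\beta'}$ specialized to $\beta=\mu-d\alpha$, $\beta'=d\alpha$. For the first, I would first note $T_0=\textup{id}$: taking $\beta=\beta'=0$ gives $T_0^2=\epsilon(0,0)T_0$, and since $T_0$ is a product of invertible $r$-operators it follows that $T_0=\epsilon(0,0)\textup{id}=\textup{id}$ under the normalization $\epsilon(0,0)=1$. The cocycle relation at $\beta=\mu$, $\beta'=-\mu$ then gives $T_\mu T_{-\mu}=\epsilon(\mu,-\mu)T_0$, and the normalization $\epsilon(\mu,-\mu)=1$ of the Frenkel--Kac cocycle finishes the first assertion.

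It remains to evaluate the automorphism $T^{\textup{ad}}_\mu$ on loop elements, a weight computation: $T^{\textup{ad}}_\mu$ carries the $\nu$-weight space of the adjoint representation to its $t_\mu(\nu)$-weight space. Using the explicit formula for $t_\mu$ together with $\bform{\alpha}{\delta}=\bform{\mu}{\delta}=\bform{\delta}{\delta}=0$, I compute that the weight $-\alpha+s\delta$ of $x^-_\alpha\otimes t^s$ is sent to $-\alpha+\big(s+(\mu|\alpha)\big)\delta$, and that the weight $s\delta$ of $h\otimes t^s$ ($h\in\csa$, $s\neq0$) is fixed. Since $\afsl$ has one-dimensional real root spaces, $T^{\textup{ad}}_\mu(x^-_\alpha\otimes t^s)$ is a scalar multiple of $x^-_\alpha\otimes t^{s+(\mu|\alpha)}$, and $T^{\textup{ad}}_\mu$ preserves the space $\csa\otimes t^s$.

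The hard part is pinning these scalars to be exactly $1$ (and showing $T^{\textup{ad}}_\mu$ is the identity on all of $\csa\otimes t^s$, not merely preserving it). This is where the explicit normalization of the $r$-operators on a fixed Chevalley basis in \cite{FK} is essential: one checks, using compatibility with the Lie bracket and those conventions, that $T^{\textup{ad}}_\mu$ is precisely the loop-rotation automorphism $x_\gamma\otimes t^s\mapsto x_\gamma\otimes t^{s+(\mu|\gamma)}$, $h\otimes t^s\mapsto h\otimes t^s$, which forces all scalars to $1$ and yields the fourth assertion. Finally the fifth assertion follows by combining this with the third: $T_\mu\,(h\otimes t^s)\,T_{-\mu}=T^{\textup{ad}}_\mu(h\otimes t^s)=h\otimes t^s$ as operators on $L(\Lambda_0)$, so $T_\mu$ commutes with $h\otimes t^s$, giving the asserted equalities $T_\mu(h\otimes t^s)v=T^{\textup{ad}}_\mu(h\otimes t^s)T_\mu v=(h\otimes t^s)T_\mu v$.
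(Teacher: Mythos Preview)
The paper does not prove this proposition: it is stated as a summary of \cite[Propositions~1.2 and~2.3]{FK} and left without proof. Your sketch is a reasonable outline of why the Frenkel--Kac results hold, and the overall strategy---reduce everything to the conjugation identity $e^{\pi(Y)}\pi(X)e^{-\pi(Y)}=\pi(e^{\operatorname{ad}Y}X)$ for ad-locally-nilpotent $Y$, then do the weight bookkeeping and appeal to the explicit Chevalley-basis normalizations in \cite{FK} to fix the scalars---is exactly the right one.

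Two small points worth tightening. First, your argument that $T_0=\mathrm{id}$ is slightly circular: ``$T_0$ is a product of invertible $r$-operators'' only gives invertibility, not that it is the identity, and $T_0$ is not literally defined as any such product (the definition $T_\alpha=r_{\delta-\alpha}r_\alpha$ is only for $\alpha\in R$). In \cite{FK} the map $\beta\mapsto T_\beta$ is constructed as a projective representation of the additive group $Q$, so $T_0=\mathrm{id}$ is built into the construction; alternatively one can argue that $T_0$ acts on each weight space $L(\Lambda_0)_\nu$ (since $t_0=\mathrm{id}$), commutes with all of $\afsl$ by part~(3), and fixes $v_{\Lambda_0}$, hence is the identity by irreducibility. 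Second, the claim $\epsilon(\mu,-\mu)=1$ genuinely depends on the specific Frenkel--Kac normalization of the cocycle (for a general $\{\pm1\}$-valued $2$-cocycle this need not hold), so it is good that you flag this as a normalization issue rather than a formal consequence.
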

\subsection{}The goal of this subsection is to define a  translation operator $T_{\varpi_i}$ associated to a fundamental weight $\varpi_i\,\, (i\in I)$ of $\lieg$.
\subsubsection{}
Let $\tau$ be an automorphism of $\afsl$ such that $\tau \csaf = \csaf$. We have the induced action of $\tau$ on $\csaf^*$ by $\cform{\tau\lambda}{h} = \cform{\lambda}{\tau^{-1}h}$. 
Given an $\afsl$-module $V$, let $V^{\tau}$ denote the module with the twisted action $$x\circ v=\tau^{-1}(x)\,v, \qquad\text{ for }\qquad x\in \afsl, \,v \in V.$$
Observe that for automorphisms $\tau_1, \tau_2$, we have $V^{\tau_1\tau_2} \simeq \left(V^{\tau_2}\right)^{\tau_1}$.

For $i\in I$, we now study the twisted actions on $L(\Lambda_0)$ by two specific automorphisms $\tsigma_i, \, \tilde{\phi}_{w_0w_{0,i}}$ of $\afsl$. 
First, recall from \S\ref{notn} that $\sigma_i=t_{\omega_i}w_{0,i}w_0$ is an automorphism of the Dynkin diagram of $\afsl$:
$$\sigma_i\,\alpha_p = \alpha_{i+p \,(\text{mod} \,\,r+1)},\quad\forall\,\,p\in \widehat{I},\qquad \textup{and} \qquad \sigma_i\,\rho=\rho.$$
 Consider the Lie algebra automorphism $\tsigma_i$ of $\afsl$ given by the relations
\beq\label{e:tsigma}
\tsigma_i(e_p) = e_{i+p\, (\text{mod} \,\,r+1)}, \,\,\tsigma_i(f_p) = f_{i+p \,(\text{mod} \,\,r+1)}, \,\, \tsigma_i(\alpha^\vee_p) = \alpha^\vee_ {i+p\, (\text{mod}\,\,r+1)},\,\,\forall\,\, p\in\widehat{I}, \,\,\text{and} \,\,\tsigma_i(\rho^{\vee}) = \rho^\vee.
\eeq
Here
 $\rho^\vee \in \csaf$ is the unique element for which  $\langle\alpha_p,{\rho}^{\vee}\rangle=1,\,\,\forall\,\, p\in\widehat{I},$ and $\langle\Lambda_0,\rho^{\vee}\rangle=0$. Observe that 
 $\tsigma_i$  leaves $\csaf$ invariant, and its induced action on $\csaf^*$ coincides with $\sigma_i$,  i.e.,
\beq\label{e:coincide1} 
\cform{\sigma_i\Lambda}{h} = \cform{\Lambda}{\tsigma_i^{-1}h}, \qquad \forall\,\,h\in\widehat{\csa},\, \Lambda\in \widehat{\csa}^*.
\eeq 

Given $w\in W$, define the map $\phi_w: \lieg\rightarrow\lieg$ by
$$\phi_w(x_\alpha)=x_{w\alpha},\quad \phi_w(h_\alpha)=h_{w\alpha},\qquad\forall\,\,\alpha\in R.$$
It is easy to see that $\phi_w$ is an automorphism of $\lieg$
and it can be extended to an automorphism $\tilde{\phi}_w$ of $\afsl$ by defining 
$$\tilde{\phi}_w(c)=c, \quad\tilde{\phi}_w(d)=d,\qquad \tilde{\phi}_w(x\otimes t^s)={\phi}_w(x)\otimes t^s, \quad \forall \,\,x\in\lieg,\,s\in\mathbb{Z}.$$
Clearly $\tilde{\phi}_w$  leaves $\csaf$ invariant, and its induced action on $\csaf^*$ coincides with $w$,  i.e.,
\beq\label{e:coincide2}
 \cform{w\,\Lambda}{h} = \cform{\Lambda}{\tilde{\phi}_{w}^{-1}h}, \qquad \forall\,\,h\in\widehat{\csa},\, \Lambda\in \widehat{\csa}^*.
\eeq

For $i\in I$, set ${\tilde{t}_{\varpi_i}}:= \tsigma_i \,\tilde{\phi}_{w_0w_{0,i}}.$ Observe that ${\tilde{t}_{\varpi_i}}$ leaves $\csaf$ invariant, and ${\tilde{t}_{\varpi_i}}(\lieg_\alpha)=\lieg_{{{t}_{\varpi_i}}(\alpha)}$ for all $\alpha\in R$. 
From \eqref{e:coincide1}--\eqref{e:coincide2}, we have
\beq\label{e:coincidefort} 
\cform{t_{\varpi_i}\Lambda}{h} = \cform{\Lambda}{\tilde{t}_{\varpi_i}^{-1}h}, \qquad \forall\,\,h\in\widehat{\csa},\, \Lambda\in \widehat{\csa}^*.
\eeq
For $p\in\widehat{I}$, note that 
\beq\label{e:w0w0i}
{w_0w_{0,i}}(\alpha_p)=\begin{cases}
                      \alpha_{r+1+p-i}, & p<i,\\
                      \alpha_{p-i}, & p>i,\\
                      -\theta, & p=i,\\
                      \alpha_{r+1-i}+\delta, & p=0.\\
                     \end{cases}
\qquad\textup{and}\qquad                    
{w_{0,i}w_0}(\alpha_p)=\begin{cases}
                      \alpha_{p+i-r-1}, & r+1-p<i,\\
                      \alpha_{p+i}, & r+1-p>i,\\
                      -\theta, & r+1-p=i,\\
                      \alpha_{i}+\delta, & p=0.\\
                     \end{cases}
\eeq
Using \eqref{e:tsigma} and \eqref{e:w0w0i}, for $p\in\widehat{I}$, we get 
\beq\label{e:ttildwi}
\tilde{t}_{\varpi_i}(e_p)=\begin{cases}
                       e_p, &p\neq 0,i,\\
                       e_i\otimes t^{-1}, &p=i,\\
                       x_{-\theta}\otimes t^2, &p=0,
                     \end{cases}
\qquad
\tilde{t}_{\varpi_i}(f_p)=\begin{cases} f_p, &p\neq 0,i,\\
                       f_i\otimes t, &p=i,\\
                       x_{\theta}\otimes t^{-2}, &p=0,
                     \end{cases}
\eeq
and
\beq\label{e:ttildinvwi}
{\tilde{t}}_{\varpi_i}^{-1}(e_p)=\begin{cases} e_p, &p\neq 0,i,\\
                       e_i\otimes t, &p=i,\\
                       x_{-\theta}, &p=0,
                     \end{cases}
\qquad
{\tilde{t}}^{-1}_{\varpi_i}(f_p)=\begin{cases} f_p, &p\neq 0,i,\\
                       f_i\otimes t^{-1}, &p=i,\\
                       x_{\theta}, &p=0.
                     \end{cases}
\eeq
Thus
\beq\label{e:ttildinvwigen}
 {\tilde{t}}_{\varpi_i}\,(x_{\alpha}\otimes t^s) =(x_{\alpha}\otimes t^{s-(\varpi_i|\alpha)})\quad\textup{and}\quad {\tilde{t}}_{\varpi_i}^{-1}\,(x_{\alpha}\otimes t^s) =(x_{\alpha}\otimes t^{s+(\varpi_i|\alpha)}),\quad\forall\,\,\alpha\in R, \,s\in \mathbb{Z}.
 \eeq
\begin{proposition}\label{l0l1}
With notation as above, for $i\in I$, we have  $L(\Lambda_0)^{\tilde{t}_{\varpi_i}} \simeq L(\Lambda_i)$.
\end{proposition}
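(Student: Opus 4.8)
The plan is to produce inside $L(\Lambda_0)^{\tilde{t}_{\varpi_i}}$ a cyclic vector satisfying the presentation \eqref{e:defofllambd1}--\eqref{e:defofllambd3} of $L(\Lambda_i)$; the natural candidate is the highest weight vector $v_{\Lambda_0}$ of $L(\Lambda_0)$, now regarded as a vector in the twisted module, on which $\afsl$ acts by $x\circ v_{\Lambda_0}=\tilde{t}_{\varpi_i}^{-1}(x)\,v_{\Lambda_0}$. First I would note that, $\tilde{t}_{\varpi_i}$ being a Lie algebra automorphism of $\afsl$, the twisting functor is an auto-equivalence of the category of $\afsl$-modules; hence $L(\Lambda_0)^{\tilde{t}_{\varpi_i}}$ is again irreducible, and it suffices to check that $v_{\Lambda_0}$ satisfies the defining relations of $L(\Lambda_i)$: the resulting surjection $L(\Lambda_i)\twoheadrightarrow\mathbf{U}(\afsl)\circ v_{\Lambda_0}=L(\Lambda_0)^{\tilde{t}_{\varpi_i}}$ is then automatically an isomorphism (both sides irreducible, the image nonzero), since $\Lambda_i\in\widehat{P}^{+}$.

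\textbf{The routine verifications.} For \eqref{e:defofllambd1}: for $h\in\widehat{\csa}$ one computes $h\circ v_{\Lambda_0}=\langle\Lambda_0,\tilde{t}_{\varpi_i}^{-1}h\rangle\,v_{\Lambda_0}=\langle t_{\varpi_i}\Lambda_0,h\rangle\,v_{\Lambda_0}$ using \eqref{e:coincidefort}, and since $w\Lambda_0=\Lambda_0$ for $w\in W$ and $\sigma_i=t_{\varpi_i}w_{0,i}w_0$, we get $t_{\varpi_i}\Lambda_0=\sigma_i w_0w_{0,i}\Lambda_0=\sigma_i\Lambda_0=\Lambda_i$; so $v_{\Lambda_0}$ has twisted weight $\Lambda_i$. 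For \eqref{e:defofllambd2} and \eqref{e:defofllambd3} I would feed the explicit formulas \eqref{e:ttildinvwi} for $\tilde{t}_{\varpi_i}^{-1}$ on Chevalley generators into the relations. For $p\notin\{0,i\}$: $e_p\circ v_{\Lambda_0}=e_p v_{\Lambda_0}=0$ and $f_p\circ v_{\Lambda_0}=f_p v_{\Lambda_0}=0$ are literally the corresponding relations in $L(\Lambda_0)$ (note $\langle\Lambda_i,\alpha_p^\vee\rangle=0=\langle\Lambda_0,\alpha_p^\vee\rangle$ for such $p$). For $p=0$: $\tilde{t}_{\varpi_i}^{-1}(e_0)=x_{-\theta}$ and $\tilde{t}_{\varpi_i}^{-1}(f_0)=x_{\theta}$ both lie in $\lieg$, and $\mathbf{U}(\lieg)v_{\Lambda_0}$ is an integrable cyclic highest weight $\lieg$-module of highest weight $\Lambda_0|_{\csa}=0$, hence the trivial module, so $\lieg\,v_{\Lambda_0}=0$ and both relations hold. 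For $p=i$, $e_i\circ v_{\Lambda_0}=(x_{\alpha_i}\otimes t)v_{\Lambda_0}=0$ since $x_{\alpha_i}\otimes t$ is a positive root vector of $\afsl$ (the real root $\alpha_i+\delta$), which annihilates the highest weight vector.

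\textbf{The main obstacle.} The one genuinely non-formal point is the last relation, \eqref{e:defofllambd3} for $p=i$: since $\langle\Lambda_i,\alpha_i^\vee\rangle=1$ and $\tilde{t}_{\varpi_i}^{-1}(f_i)=f_i\otimes t^{-1}=x_{-\alpha_i}\otimes t^{-1}$, what must be shown is $(x_{-\alpha_i}\otimes t^{-1})^{2}v_{\Lambda_0}=0$. Here I would use that $\{x_{\alpha_i}\otimes t,\ x_{-\alpha_i}\otimes t^{-1},\ h_{\alpha_i}+c\}$ is an $\sltwo$-triple attached to the real root $\alpha_i+\delta$: the bracket $[x_{\alpha_i}\otimes t,\ x_{-\alpha_i}\otimes t^{-1}]=h_{\alpha_i}+(x_{\alpha_i}|x_{-\alpha_i})\,c=h_{\alpha_i}+c$ picks up a contribution from the central element, so that $v_{\Lambda_0}$, while killed by $x_{\alpha_i}\otimes t$, has $(h_{\alpha_i}+c)$-eigenvalue $\langle\Lambda_0,h_{\alpha_i}\rangle+\langle\Lambda_0,c\rangle=0+1=1$. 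Finite-dimensional $\sltwo$-representation theory (applicable because $L(\Lambda_0)$ is integrable over $\afsl$, so $x_{-\alpha_i}\otimes t^{-1}$ acts locally nilpotently) then yields $(x_{-\alpha_i}\otimes t^{-1})^{1+1}v_{\Lambda_0}=0$, exactly as required. Getting this $h$-eigenvalue right, with its $c$-term, is the crux; everything else is bookkeeping with \eqref{e:ttildinvwi}. Assembling the verifications produces the surjection $L(\Lambda_i)\twoheadrightarrow L(\Lambda_0)^{\tilde{t}_{\varpi_i}}$, an isomorphism by irreducibility, proving the proposition.
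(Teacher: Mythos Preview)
Your proof is correct and follows essentially the same line as the paper's: verify that $v_{\Lambda_0}$, viewed in the twisted module, satisfies the defining relations \eqref{e:defofllambd1}--\eqref{e:defofllambd3} of $L(\Lambda_i)$ via the formulas \eqref{e:ttildinvwi}, with the one nontrivial check being $(f_i\otimes t^{-1})^2 v_{\Lambda_0}=0$ handled by the $\sltwo$-triple $\{e_i\otimes t,\ f_i\otimes t^{-1},\ \alpha_i^\vee+c\}$. The only cosmetic difference is that you invoke irreducibility of the twist (via auto-equivalence) whereas the paper uses irreducibility of $L(\Lambda_i)$ to conclude the surjection is injective; both arguments are standard and equivalent here.
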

\begin{proof}
We consider the $\mathbf{U}(\afsl)$-linear map $L(\Lambda_i) \to L(\Lambda_0)^{{\tilde{t}_{\varpi_i}}}$ which sends $v_{\Lambda_i}$ to $v_{\Lambda_0}$.
To show this is well defined, we only need to check that $v_{\Lambda_0} \in  L(\Lambda_0)^{{\tilde{t}_{\varpi_i}}}$ satisfies the relations in \eqref{e:defofllambd1}-\eqref{e:defofllambd3} 
for $\Lambda = \Lambda_i$. 
Since $w_0w_{0,i}\Lambda_0=\Lambda_0$, the relations in  \eqref{e:defofllambd1} follows from \eqref{e:coincide1}--\eqref{e:coincide2}. 
The relations in \eqref{e:defofllambd2} are immediate from \eqref{e:ttildinvwi} by using Proposition~\ref{wtsbasicrep}~\eqref{i:weights}. Since $f_p\,v_{\Lambda_0}=0$ for $p\neq 0,i,$ and $x_{\theta}\,v_{\Lambda_0}=0,$ 
to prove the relations in \eqref{e:defofllambd3}, we only need to show that 
$ (f_i\otimes t^{-1})^2\,v_{\Lambda_0}=0$ in $L(\Lambda_0)$. But this follows easily by a standard $\mathfrak{sl}_2$ argument using the $\mathfrak{sl}_2$ copy spanned by 
$e_i\otimes t, f_i\otimes t^{-1}$, and $\alpha^\vee_{i}+c$.
Now, this map is a surjection, since $v_{\Lambda_0}$ generates $L(\Lambda_0)^{{\tilde{t}_{\varpi_i}}}$. Since
$L(\Lambda_i)$ is irreducible, it must be an isomorphism.
\end{proof}
For $i\in I$, let $T_{\varpi_i}$ be the isomorphism from $L(\Lambda_0)^{\tilde{t}_{\varpi_i}}$ onto $L(\Lambda_i)$. Observe that 
$$T_{\varpi_i}\,L(\Lambda_0)_\nu=L(\Lambda_i)_{t_{\varpi_i}(\nu)}, \qquad\forall\,\,\nu\in\widehat{\csa}^*.$$
Set $T_{-\varpi_i}:=T^{-1}_{\varpi_i}$.
The isomorphism $T_{-\varpi_i}: L(\Lambda_i) \to L(\Lambda_0)^{\tilde{t}_{\varpi_i}}$ maps $v_{\Lambda_i} \mapsto v_{\Lambda_0}$. 
It is then determined on all of $L(\Lambda_i)$ by $\afsl$-linearity, i.e., by the relation 
\beq\label{e:tmwi}T_{-\varpi_i}(x\,v) = \tilde{t}_{\varpi_i}^{-1}(x)\,T_{-\varpi_i}(v), \quad \forall\, \,  x \in \afsl,\, \ v \in L(\Lambda_i).\eeq
Now using \eqref{e:ttildinvwigen}, we get 
 \beq\label{e:tmwiprop}
 T_{-\varpi_i}\,(x^-_{\alpha}\otimes t^s)\, T_{\varpi_i}\,v=(x^-_{\alpha}\otimes t^{s-(\varpi_i|\alpha)})\,v,\quad\forall\,\,\alpha\in R^+,\, v\in L(\Lambda_0), \,s\in \mathbb{Z}.
 \eeq
\subsection{}\label{ss:tlamdef}
For $\lambda\in P^+$ and $\beta\in Q$, we define a linear isomorphism
$T_{\lambda-\beta}:L(\Lambda_0)\rightarrow L(\Lambda_{i_\lambda})$ as follows:
$$T_{\lambda-\beta}:=T_{\varpi_{i_\lambda}}T_{\lambda-\beta-\varpi_{i_\lambda}}.$$
Suppose there is $\lambda^\prime\in P^+$ and $\beta^\prime\in Q$ such that $\lambda-\beta=\lambda^\prime-\beta^\prime$. Then it is easy to see that $i_\lambda=i_{\lambda^\prime}$. Hence
the definition  is well defined. Observe that 
$$T_{\lambda-\beta}\,L(\Lambda_0)_\nu=L(\Lambda_{i_\lambda})_{t_{\lambda-\beta}(\nu)}, \qquad\forall\,\,\nu\in\widehat{\csa}^*.$$
Set $$T_{-(\lambda-\beta)}:=T^{-1}_{\lambda-\beta} \qquad \textup{and} \qquad \epsilon(\beta+w_{i_\lambda}, \,\beta'):=\epsilon(\beta,\,\beta'),\qquad \forall\,\, \lambda\in P^+, \,\beta, \beta'\in Q.$$
\begin{proposition}\label{p:tlambdambeta}Let $\lambda\in P^+$ and $\beta\in Q$. Then
\begin{enumerate}
 \item $T_{\lambda-\beta-d\alpha}\, T_{d\alpha}=\epsilon(\lambda-\beta-d\alpha,\,d\alpha)\,T_{\lambda-\beta},\quad\forall \,\,\alpha\in R^+, \, d\in \mathbb{Z}_{\geq0}.$
 \smallskip
 \item $T_{-(\lambda-\beta)}\,(x^-_{\alpha}\otimes t^s)\, T_{\lambda-\beta}\,v=(x^-_{\alpha}\otimes t^{s-(\lambda-\beta|\alpha)})\,v,\quad\forall\,\,\alpha\in R^+,\, v\in L(\Lambda_0),\, s\in \mathbb{Z}.$
\end{enumerate}
 \end{proposition}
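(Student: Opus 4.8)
The plan is to prove both parts by unwinding the definition $T_{\lambda-\beta}=T_{\varpi_{i_\lambda}}\,T_{\lambda-\beta-\varpi_{i_\lambda}}$ from \S\ref{ss:tlamdef} and then invoking the Frenkel-Kac properties collected in Proposition~\ref{fkprop} together with \eqref{e:tmwiprop}. Set $i:=i_\lambda$ and $\mu:=\lambda-\beta-\varpi_i$; since $\lambda-\varpi_i\in Q$ (see \S\ref{ss:inclweyl}) one has $\mu\in Q$, so $T_\mu$ is an honest Frenkel-Kac translation operator on $L(\Lambda_0)$, while $T_{\varpi_i}\colon L(\Lambda_0)^{\tilde{t}_{\varpi_i}}\to L(\Lambda_i)$ is the $\afsl$-module isomorphism of Proposition~\ref{l0l1} and hence carries no cocycle factor. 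For every $\gamma\in Q$ we have $\lambda-\beta-\gamma=\lambda-(\beta+\gamma)$ with $\beta+\gamma\in Q$, so directly from the definition $T_{\lambda-\beta-\gamma}=T_{\varpi_i}\,T_{\mu-\gamma}$.

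For part~(1) I would take $\gamma=d\alpha$ and, using Proposition~\ref{fkprop}\eqref{fkprop:p2} for the pair $\mu-d\alpha,\,d\alpha\in Q$, compute
\[
T_{\lambda-\beta-d\alpha}\,T_{d\alpha}=T_{\varpi_i}\,T_{\mu-d\alpha}\,T_{d\alpha}
=\epsilon(\mu-d\alpha,\,d\alpha)\,T_{\varpi_i}\,T_{\mu}=\epsilon(\mu-d\alpha,\,d\alpha)\,T_{\lambda-\beta}.
\]
Since $\mu-d\alpha=(\lambda-\beta-d\alpha)-\varpi_i$ with $(\lambda-\beta-d\alpha)-\varpi_i\in Q$, the shift-by-$\varpi_{i_\lambda}$ convention for $\epsilon$ fixed in \S\ref{ss:tlamdef} identifies the scalar with $\epsilon(\lambda-\beta-d\alpha,\,d\alpha)$, which is the assertion.

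For part~(2) I would first write $T_{-(\lambda-\beta)}=T_{\lambda-\beta}^{-1}=T_{-\mu}\,T_{-\varpi_i}$, using $T_{-\varpi_i}=T_{\varpi_i}^{-1}$ (by definition) and $T_{-\mu}=T_\mu^{-1}$ (Proposition~\ref{fkprop}\eqref{fkprop:p3}). Then, for $v\in L(\Lambda_0)$, applying \eqref{e:tmwiprop} to the vector $T_\mu v\in L(\Lambda_0)$,
\[
T_{-(\lambda-\beta)}\,(x^-_{\alpha}\otimes t^s)\,T_{\lambda-\beta}\,v
=T_{-\mu}\,\bigl(T_{-\varpi_i}\,(x^-_{\alpha}\otimes t^s)\,T_{\varpi_i}\bigr)\,T_\mu\,v
=T_{-\mu}\,(x^-_{\alpha}\otimes t^{s-(\varpi_i|\alpha)})\,T_\mu\,v,
\]
and then, invoking Proposition~\ref{fkprop}(3) (with $-\mu$ in place of $\mu$) followed by Proposition~\ref{fkprop}\eqref{fkprop:p4},
\[
T_{-\mu}\,(x^-_{\alpha}\otimes t^{s-(\varpi_i|\alpha)})\,T_\mu\,v
=T^{\textup{ad}}_{-\mu}\bigl(x^-_{\alpha}\otimes t^{s-(\varpi_i|\alpha)}\bigr)\,v
=\bigl(x^-_{\alpha}\otimes t^{s-(\varpi_i|\alpha)-(\mu|\alpha)}\bigr)\,v.
\]
Finally $(\varpi_i|\alpha)+(\mu|\alpha)=(\varpi_i+\mu\,|\,\alpha)=(\lambda-\beta\,|\,\alpha)$, so the right-hand side equals $(x^-_{\alpha}\otimes t^{s-(\lambda-\beta|\alpha)})\,v$, as wanted.

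I do not expect a genuine obstacle; the argument is a chase through the definitions. The one point requiring care is the cocycle bookkeeping in part~(1): one must use that $T_{\varpi_i}$ enters as an honest module isomorphism, so that the only sign appearing is the one produced by composing the two $Q$-translations $T_{\mu-d\alpha}$ and $T_{d\alpha}$, and then match that sign with $\epsilon(\lambda-\beta-d\alpha,\,d\alpha)$ through the $\varpi_{i_\lambda}$-shift built into the definition of $\epsilon$ on $P\times Q$. In part~(2) the analogous subtlety is that \eqref{e:tmwiprop} must be applied to $T_\mu v$ and not to $v$, which is exactly what makes the two exponent shifts --- from \eqref{e:tmwiprop} and from Proposition~\ref{fkprop}\eqref{fkprop:p4} --- combine to $(\lambda-\beta\,|\,\alpha)$.
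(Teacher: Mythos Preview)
Your proposal is correct and is exactly the unpacking the paper has in mind: its proof consists of the single sentence ``immediate from \eqref{e:tmwiprop} and Proposition~\ref{fkprop}\eqref{fkprop:p2}--\eqref{fkprop:p4}'', and you have supplied precisely those details, including the correct handling of the $\varpi_{i_\lambda}$-shift convention for $\epsilon$ in part~(1) and the two-step exponent shift in part~(2).
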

 \begin{proof}
 The proof is immediate from \eqref{e:tmwiprop} and Proposition~\ref{fkprop}~\eqref{fkprop:p2}--\eqref{fkprop:p4}.
 \end{proof}
  
\subsection{}\label{ss:MTp}
Given $\lambda\in P^+, \,\pop\in\pnotl,$ $k\in\mathbb{Z}_{\geq0}$,  and $s\in I\cup\{r+1\}$, define $\epsilon_{\pop^k_s}\in\{\pm1\}$ as follows: 
$$\epsilon_{\pop^k_s}:=\prod_{p=s}^r\left( (-1)^{[\frac{d_{p,p}+k}{2}]}\prod_{j=p}^r \,\epsilon\big({\lambda+k\alpha_{1,p}-\sum_{p<i\leq j\leq r}d_{i,j}\alpha_{i,j}}-\sum_{u=j}^r d_{p,u}\alpha_{p,u}, \,\,d_{p,j}\alpha_{p,j}\big)\right).$$
Here, 
$[x]$ denotes the greatest integer less than or equal to $x$.

We are now in a position to state the main result of this section.
\begin{theorem}\label{MTp}
 Let $\lambda\in P^+,$ $\pop\in\pnotl,$ $k\in\mathbb{Z}_{\geq0}$, and $s\in I\cup\{r+1\}$. If $d_{\ell,\ell}(\pop)\geq\depthpl $ for all $s\leq \ell\leq r$, then 
 \beq\label{e:sMTp}
 \epsilon_{\pop^k_s}\,\rho_{\pop^k_s}\,T_{\lambda+k\theta}\,v_{\Lambda_0}= T_{\lambda+k\alpha_{1,s-1}-\sum_{s\leq i\leq j\leq r}d_{i,j}(\pop)\alpha_{i,j}}\, f_{\mathcal{I}(\pop_s)}\,v_{\Lambda_0},
 \eeq
 where  $f_{\mathcal{I}(\pop_s)}$ is a polynomial in $\var,$
 depends only  on the elements from the set $\mathcal{I}(\pop_s)$ such that the weight of  $f_{\mathcal{I}(\pop_s)}\,v_{\Lambda_0}$ in $L(\Lambda_0)$ is $\Lambda_0-\depthps\delta$.
\end{theorem}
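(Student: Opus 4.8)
The plan is to prove Theorem~\ref{MTp} by downward induction on $s$, from $s=r+1$ to $s=1$. The case $s=1$ is exactly what Theorem~\ref{MT} needs: since $\pop_1=\pop$ and (reading $\alpha_{1,0}$ as $0$) the right-hand side of \eqref{e:sMTp} is then manifestly independent of $k$, while the left-hand side is $\epsilon_{\pop^k}\,\rho_{\pop^k}\,w_{\lambda+k\theta}=v_{\pop^k}$, the $s=1$ instance gives $v_{\pop^k}=v_{\pop}$ for all $k$. The base case $s=r+1$ is trivial: $\rho_{\pop^k_{r+1}}=1$, $\epsilon_{\pop^k_{r+1}}=1$ (an empty product), $\mathcal{I}(\pop_{r+1})=\varnothing$, $d(\pop_{r+1})=0$ and $\alpha_{1,r}=\theta$, so \eqref{e:sMTp} reads $T_{\lambda+k\theta}v_{\Lambda_0}=T_{\lambda+k\theta}v_{\Lambda_0}$ with $f_{\mathcal{I}(\pop_{r+1})}=1$.

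For the inductive step I would fix $s\in I$ with $d_{\ell,\ell}(\pop)\ge d(\pop_\ell)$ for all $s\le\ell\le r$, so that the inductive hypothesis is available at level $s+1$, and use the recursion $\rho_{\pop^k_s}=M_s\,\rho_{\pop^k_{s+1}}$ with the row-$s$ block $M_s=\prod_{j=s}^{r}x^-_{s,j}\big(d_{s,j}(\pop^k_s),\,d'_{s,j}(\pop^k_s),\,\underline{\pi(j)}^s\big)$ (whose factors pairwise commute). Substituting the inductive hypothesis gives
\[
\rho_{\pop^k_s}\,T_{\lambda+k\theta}\,v_{\Lambda_0}=\epsilon_{\pop^k_{s+1}}\,M_s\,T_{\nu_{s+1}}\,f_{\mathcal{I}(\pop_{s+1})}\,v_{\Lambda_0},\qquad \nu_{s+1}=\lambda+k\alpha_{1,s}-\sum_{s+1\le i\le j\le r}d_{i,j}(\pop)\,\alpha_{i,j}.
\]
Since $\nu_{s+1}\in\lambda+Q$, Proposition~\ref{p:tlambdambeta} lets me slide $M_s$ through $T_{\nu_{s+1}}$, converting each factor $x^-_{s,j}\otimes t^m$ into $x^-_{s,j}\otimes t^{m-(\nu_{s+1}|\alpha_{s,j})}$ on $L(\Lambda_0)$. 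Then, writing $\gamma=\nu_{s+1}-\nu_s=(d_{s,s}(\pop)+k)\alpha_{s,s}+\sum_{j=s+1}^{r}d_{s,j}(\pop)\alpha_{s,j}\in Q$ and applying $T_{-\nu_s}$, the definitions of \S\ref{ss:tlamdef} together with Proposition~\ref{fkprop} (the relations $T_\mu X T_{-\mu}=T^{\textup{ad}}_\mu(X)$, $T^{\textup{ad}}_\mu(x^-_\alpha\otimes t^s)=x^-_\alpha\otimes t^{s+(\mu|\alpha)}$, the fact that $T^{\textup{ad}}_\mu$ acts as the identity on Cartan currents on $L(\Lambda_0)$, and $T_{-\nu_s}T_{\nu_{s+1}}=\pm T_\gamma$ on $L(\Lambda_0)$) reduce the assertion at level $s$ to an identity of the shape
\[
\epsilon_{\pop^k_s}\,\epsilon_{\pop^k_{s+1}}\,(\pm)\,\widehat{M}_s\,f_{\mathcal{I}(\pop_{s+1})}\,T_{\gamma}\,v_{\Lambda_0}=f_{\mathcal{I}(\pop_s)}\,v_{\Lambda_0},
\]
where $\widehat{M}_s$ is $M_s$ with each column-$j$ exponent replaced by $d'_{s,j}(\pop^k_s)-(\text{a part of }\underline{\pi(j)}^s)-(\nu_s|\alpha_{s,j})$ and $(\pm)$ is an explicit product of $2$-cocycle values $\epsilon(\,\cdot\,,\,\cdot\,)$.

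The heart of the argument is evaluating $\widehat{M}_s\,f_{\mathcal{I}(\pop_{s+1})}\,T_\gamma v_{\Lambda_0}$. First I would carry out the routine but lengthy computation of the exponents of $\widehat{M}_s$, telescoping the pattern entries via $\lambda^{j}_i=\lambda_i-\sum_{m\ge j}d_{i,m}(\pop)$ and using the form values $(\alpha_{s,j}|\alpha_{s,j'})=1+\delta_{j,j'}$, $(\alpha_{1,s-1}|\alpha_{s,j})=-1$; the outcome is that every exponent of $\widehat{M}_s$ equals $D:=d_{s,s}(\pop)+k$ plus an integer depending only on the invariant data in $\mathcal{I}(\pop_s)$, so the sole $k$-dependence surviving in $\widehat{M}_s$ is through the integer $D$, which also controls $\gamma$ (via its $\alpha_{s,s}$-component) and the number of diagonal factors. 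Next I would peel the off-diagonal translations $T_{d_{s,j}\alpha_{s,j}}$ ($j>s$) out of $T_\gamma$ (Proposition~\ref{fkprop}(2)) and absorb them, via $T^{\textup{ad}}$, against the off-diagonal columns of $\widehat{M}_s$ into products of Cartan currents whose exponents involve only $d_{s,j}(\pop)$, the $d'_{p,j}(\pop)$ and $\underline{\pi(j)}^s$; and I would show that the $D$ diagonal factors $x^-_{s,s}\otimes t^{\,D-(\text{part of }\underline{\pi(s)}^s)}$ together with $T_{D\alpha_{s,s}}v_{\Lambda_0}$ collapse — using the Frenkel--Kac description of $T_{D\alpha_{s,s}}v_{\Lambda_0}$ and the $\mathfrak{sl}_2$-relations along $\alpha_{s,s}$ in the level-one module — to $(-1)^{[D/2]}$ times a polynomial in the currents $\alpha^\vee_s\otimes t^{-m}$ depending only on $\underline{\pi(s)}^s$, the two $D$-dependences cancelling. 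By \eqref{e:invsetofps}, $\mathcal{I}(\pop_s)=\mathcal{I}(\pop_{s+1})\cup\big(\bigcup_{s\le j\le r}\mathcal{I}^j_s(\pop)\big)$, and $f_{\mathcal{I}(\pop_{s+1})}$ depends only on $\mathcal{I}(\pop_{s+1})$ by the inductive hypothesis, so the resulting vector is a polynomial in the $\alpha^\vee_i\otimes t^{-m}$ depending only on $\mathcal{I}(\pop_s)$; its weight equals $\Lambda_0-d(\pop_s)\delta$ by combining the depth recursion \eqref{e:depthofps}, the area identity \eqref{e:maxarea} and the weight computation of Lemma~\ref{l:wt}. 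By Theorem~\ref{weights} this vector is $f_{\mathcal{I}(\pop_s)}v_{\Lambda_0}$ for an appropriate polynomial $f_{\mathcal{I}(\pop_s)}$, and a final tally of signs — the $2$-cocycle factors from the $T_\gamma$-manipulations and the parity $(-1)^{[D/2]}$ from the diagonal collapse — matches $(\pm)\,\epsilon_{\pop^k_{s+1}}$ against $\epsilon_{\pop^k_s}$ exactly as these are defined in \S\ref{ss:MTp}, closing the induction.

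The step I expect to be the main obstacle is the diagonal-column collapse: tracking, with all signs, how the $D=d_{s,s}(\pop)+k$ factors $x^-_{s,s}\otimes t^{\,\cdot}$ interact with $T_{D\alpha_{s,s}}v_{\Lambda_0}$ and the level-one Serre-type relations so that the $k$-dependence disappears and only $\underline{\pi(s)}^s$ remains. This is exactly where the hypothesis $d_{\ell,\ell}(\pop)\ge d(\pop_\ell)$ for $\ell\ge s$ is used: it forces $D\ge d(\pop_{s+1})$, which keeps the $t$-exponents entering this collapse in the range where the $\mathfrak{sl}_2$-reduction is unobstructed (no premature annihilation of $v_{\Lambda_0}$, and exactly one surviving term at each commutator step) and where the output lands in the weight space $\Lambda_0-d(\pop_s)\delta$ needed to invoke Frenkel--Kac; without this bound the residual vector can fall outside that weight space, which is why the $r=1$ case in \cite{RRV1} was established only under the stronger numerical restriction recalled after Theorem~\ref{MT}.
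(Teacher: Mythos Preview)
Your overall plan—downward induction on $s$, peeling off the row-$s$ block $M_s$, conjugating through the translation operators, and collapsing the diagonal column via the rank-one stability result of \cite{RRV1} (Theorem~\ref{stabsl2} here)—is exactly the paper's strategy. The paper packages the two halves of the inductive step as separate propositions: Proposition~\ref{pf:prop} handles the off-diagonal columns $j>s$ by an inner downward induction on a parameter $q$, and Proposition~\ref{crucprop} (resting on Theorem~\ref{stabsl2} together with the commutation Lemmas~\ref{lem1}--\ref{lem2}) handles the diagonal column $j=s$.

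There is, however, a genuine gap in your off-diagonal step. You propose to ``absorb [the off-diagonal translations], via $T^{\textup{ad}}$, against the off-diagonal columns of $\widehat{M}_s$ into products of Cartan currents''. But $T^{\textup{ad}}_\beta$ applied to $x^-_{s,j}\otimes t^m$ only shifts the $t$-exponent (Proposition~\ref{fkprop}); it never produces a Cartan element, and no commutator identity turns off-diagonal $x^-_{s,j}$ factors into $h\otimes t^{-m}$'s. What the paper does instead (proof of Proposition~\ref{pf:prop}, especially \eqref{pflemmaine5}) is observe that after conjugation the vector $\big(\prod_p x^-_{s,q}\otimes t^{\,d_{s,q}-\pi(q)^s_p-\sum_{i>s}d'_{i,q}}\big)\,T_{d_{s,q}\alpha_{s,q}}\,f^{q+1}_{g_m}\,v_{\Lambda_0}$ lies in a weight space of the form $L(\Lambda_0)_{\Lambda_0-m'\delta}$, and then invoke Theorem~\ref{weights} to rewrite it as $f^q_{g_m}\,v_{\Lambda_0}$: a soft Frenkel--Kac rewriting, not an algebraic absorption, and one requiring no stability inequality. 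This must be done one column $q$ at a time, since conjugation by $T_{d_{s,j'}\alpha_{s,j'}}$ shifts the exponents in every other column, and the Cartan polynomial already present must be commuted past the next $x^-$-block (Lemma~\ref{lem2}) before Frenkel--Kac is applied again. Invoking Theorem~\ref{weights} only once at the very end is not enough, because the diagonal collapse via Proposition~\ref{crucprop} needs its input already in the form $T_\mu\,g_m\,v_{\Lambda_0}$ with $g_m$ a Cartan polynomial. Relatedly, the bound you state, $D\ge d(\pop_{s+1})$, is too weak: Proposition~\ref{crucprop} requires $D=d_{s,s}+k\ge |\underline{\pi(s)}^s|+d(\pop_{s+1})+\sum_{j>s}d^j_s(\pop)=d(\pop_s)$, which is precisely the hypothesis at $\ell=s$.
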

%This theorem is proved in \S\ref{proof:MTp}.

\subsubsection{}\label{pf:MT}{\em Proof of Theorem~\ref{MT} from Theorem~\ref{MTp}.} 
We observe that  the expression on the left hand side of \eqref{e:sMTp} depends on $k$, the one on the right hand side, when $s=1$, is independent of it. 
The fact that these two expressions are equal when 
$d_{\ell,\ell}(\pop)\geq\depthpl$ for all $1\leq \ell\leq r$, what leads to the stability properties of interest. Thus Theorem~\ref{MTp} for $s=1$, proves Theorem \ref{MT}.

\medskip
The rest of the paper is devoted to proving Theorem~\ref{MTp}.
\subsection{} In this subsection, for $x\in \lieg$, $s\in\mathbb{Z}$, and  $m\in\mathbb{N},$ we set $xt^s:=x\otimes t^s$ and $[m]:=\{1,2,\ldots,m\}$. The following two lemmas are elementary.
\begin{lemma}\label{lem1} Let $\alpha\in R^+,\, y_\alpha\in {\lieg}_{-\alpha}, \, h_1,\ldots,  h_n\in \csa$,  and $p, q_1,\ldots, q_n\in\mathbb{Z}_{\geq0}$. 
Then
$$(y_\alpha  t^p)\,\big(\prod_{i=1}^n h_i  t^{-q_i}\big)=\sum_{0\leq k\leq n}\,\sum_{\substack{A\subseteq[n]\\|A|=k}}\big(\prod_{i\in A}\langle\alpha,\, h_i\rangle\big)\,\big(\prod_{i\in[n]\setminus A} h_i  t^{-q_i}\big)
\,\big(y_\alpha   t^{p-\sum_{i\in A} q_i}\big).$$
\end{lemma}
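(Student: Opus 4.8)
This is a formal normal-ordering identity in $\mathbf{U}(\widehat{\lieg})$, and I would prove it by induction on $n$. The sole ingredient is the single commutation relation: since $y_\alpha\in\lieg_{-\alpha}$ we have $[h,y_\alpha]=-\langle\alpha,h\rangle\,y_\alpha$ for all $h\in\csa$, and the central contribution to $[h_i t^{-q_i},y_\alpha t^a]$ vanishes because $(h_i|y_\alpha)=\tr(h_i y_\alpha)=0$ for the root $\alpha$; hence, for every integer $a$,
\[
(y_\alpha t^a)(h_i t^{-q_i}) = (h_i t^{-q_i})(y_\alpha t^a) + \langle\alpha,h_i\rangle\,(y_\alpha t^{a-q_i}).
\]
I would also record that $h_1 t^{-q_1},\dots,h_n t^{-q_n}$ commute pairwise (as $\csa$ is abelian), so the product of any sub-collection of them is well defined independently of the order.

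For the induction the base case $n=0$ is immediate, both sides being $y_\alpha t^p$. For the step I would write $(y_\alpha t^p)\prod_{i=1}^n(h_i t^{-q_i})=\bigl[(y_\alpha t^p)\prod_{i=1}^{n-1}(h_i t^{-q_i})\bigr](h_n t^{-q_n})$, apply the induction hypothesis to the bracketed factor (it has the required form, with $n-1$ Cartan factors and arbitrary top power), and then move $h_n t^{-q_n}$ leftward past the single trailing factor $y_\alpha t^{p-\sum_{i\in A}q_i}$ of each summand using the displayed relation. Each $A\subseteq[n-1]$ thereby splits into two terms: one in which $h_n t^{-q_n}$ is absorbed into the Cartan product, which is the $A$-term of the target sum with $A$ now viewed as a subset of $[n]$ satisfying $n\notin A$; and one carrying the extra scalar $\langle\alpha,h_n\rangle$ and with the $t$-exponent of $y_\alpha$ lowered by a further $q_n$, which is the $(A\cup\{n\})$-term. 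The mutual commutativity of the $h_i t^{-q_i}$ is exactly what lets me place $h_n t^{-q_n}$ into its slot inside $\prod_{i\in[n]\setminus A}h_i t^{-q_i}$. As $A$ runs over all subsets of $[n-1]$, these two families of terms together run over all subsets of $[n]$, which is the asserted formula for $n$. (Peeling off the first factor $h_1 t^{-q_1}$ and then applying the hypothesis to $h_2 t^{-q_2},\dots,h_n t^{-q_n}$ with top powers $p$ and $p-q_1$ works equally well.)

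There is no genuine obstacle here: once the single-factor relation is in place the rest is bookkeeping. The only point worth a moment's care is the combinatorial matching in the inductive step, namely checking that the recursion sending $A\subseteq[n-1]$ to the pair $\{A,\,A\cup\{n\}\}$, with the accompanying updates to the $t$-exponents and scalar factors, reproduces precisely the sum over $A\subseteq[n]$; this is immediate from the displayed relation together with the commutativity of the $h_i t^{-q_i}$.
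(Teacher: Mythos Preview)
Your proof is correct and follows essentially the same approach as the paper: induction on $n$ using the single commutation relation $(y_\alpha t^a)(h_i t^{-q_i}) = (h_i t^{-q_i})(y_\alpha t^a) + \langle\alpha,h_i\rangle\,(y_\alpha t^{a-q_i})$, together with the observation that subsets of $[n]$ split according to whether they contain $n$. The only cosmetic difference is the order of operations in the inductive step---the paper first commutes $y_\alpha t^p$ past $h_n t^{-q_n}$ and then applies the hypothesis to the remaining $n-1$ factors, whereas you apply the hypothesis first and then commute $h_n t^{-q_n}$ past the trailing $y_\alpha$---but this is immaterial since the $h_i t^{-q_i}$ commute pairwise.
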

\begin{proof}
Proceed by induction on $n$. In case $n = 1$, we have  
$$(y_\alpha  t^p)\,( h_1  t^{-q_1})=( h_1  t^{-q_1})\,(y_\alpha  t^p)+[y_\alpha  t^p,\, h_1  t^{-q_1}]=( h_1  t^{-q_1})\,(y_\alpha  t^p)+\langle\alpha,\, h_1\rangle (y_\alpha   t^{p- q_1}),$$
and the result is obvious.
Now suppose that $n\geq 2$. Since $[y_\alpha  t^p,\, h_n  t^{-q_n}]=\langle\alpha, h_n\rangle (y_\alpha  t^{p-q_n})$, we have
 $$(y_\alpha  t^p)\,\big(\prod_{i=1}^n h_i  t^{-q_i}\big)=
 \big(( h_n  t^{-q_n})\,(y_\alpha  t^p)+\langle\alpha, h_n\rangle (y_\alpha  t^{p-q_n})\big)\,\big(\prod_{i=1}^{n-1} h_i  t^{-q_i}\big).$$
 Using the induction hypothesis the right hand side of the last equation becomes
\begin{align*}&( h_n  t^{-q_n})\sum_{0\leq k^\prime\leq n-1}\,\sum_{\substack{A^\prime\subseteq[n-1]\\|A^\prime|=k^\prime}}\big(\prod_{i\in A^\prime}\langle\alpha,\, h_i\rangle\big)\,\big(\prod_{i\in[n-1]\setminus A^\prime} h_i  t^{-q_i}\big)
\,\big(y_\alpha   t^{p-\sum_{i\in A^\prime} q_i}\big)\\
 &+
 \langle\alpha, h_n\rangle \sum_{0\leq k^{\prime\prime}\leq n-1}\,\sum_{\substack{A^{\prime\prime}\subseteq[n-1]\\|A^{\prime\prime}|=k^{\prime\prime}}}\big(\prod_{i\in A^{\prime\prime}}\langle\alpha,\, h_i\rangle\big)\,\big(\prod_{i\in[n-1]\setminus A^{\prime\prime}} h_i  t^{-q_i}\big)
\,\big(y_\alpha   t^{p-q_n-\sum_{i\in A^{\prime\prime}} q_i}\big).\end{align*}
This completes the proof. Indeed, for any $A\subseteq[n]$, there exists $B\subseteq [n-1]$ such that
either $A=B$ or $A=B\cup\{n\}$.
\end{proof}
\begin{lemma}\label{lem2}
Let $\alpha\in R^+,\,  y_\alpha\in {\lieg}_{-\alpha},\, h_1,\ldots,  h_n\in \csa$, and  $p_1, \ldots,  p_m, \,q_1,  \ldots,  q_n\in\mathbb{Z}_{\geq0}$. 
Then
\begin{align*}
&\big(\prod_{i=1}^m y_\alpha  t^{p_i}\big)\,\big(\prod_{i=1}^n h_i  t^{-q_i}\big)\\
&=\sum_{\substack{{{k_1,\ldots,  k_m}}\\0\leq k_i\leq n-\sum_{j=i+1}^{m}k_j}}\,\sum_{\substack{A_1,\ldots, A_m\\A_i\subseteq [n]\setminus \cup_{j=i+1}^{m} A_j\\|A_i|=k_i}}\big(\prod_{i\in \cup_{j=1}^m A_j}\langle\alpha,\, h_i\rangle\big)\,\big(\prod_{i\in[n]\setminus \cup_{j=1}^m A_j} h_i  t^{-q_i}\big)
\,\big(\prod_{j=1}^{m} y_\alpha   t^{p_j-\sum_{i\in A_j} q_i}\big).\end{align*}
\end{lemma}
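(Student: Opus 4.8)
The plan is to prove the identity by induction on the number $m$ of factors $y_\alpha t^{p_i}$, taking Lemma~\ref{lem1} as both the base case ($m=1$) and as the engine of the inductive step. The only Lie-algebra input beyond Lemma~\ref{lem1} is the single commutation relation $[y_\alpha t^p,\, h t^{-q}] = \langle\alpha,\,h\rangle\, y_\alpha t^{p-q}$, valid because $y_\alpha\in\lieg_{-\alpha}$, $h\in\csa$ give $[y_\alpha,\,h]=\langle\alpha,\,h\rangle y_\alpha$, while the central contribution $p\,\delta_{p,q}(y_\alpha|h)\,c$ vanishes since the form pairs $\lieg_{-\alpha}$ only with $\lieg_{\alpha}$; this is exactly the relation already underlying Lemma~\ref{lem1}. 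The combinatorial content to be extracted is that each $h_i t^{-q_i}$ can be \emph{absorbed} by at most one of the factors $y_\alpha t^{p_j}$, so the subsets $A_1,\ldots,A_m$ must be pairwise disjoint; the nested bookkeeping $A_i\subseteq[n]\setminus\bigcup_{j>i}A_j$ is just a convenient way of enumerating disjoint tuples.

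For the inductive step, assume the formula holds for $m-1$ factors. I would peel off the leftmost factor and write
\[
\big(\textstyle\prod_{i=1}^m y_\alpha t^{p_i}\big)\big(\prod_{i=1}^n h_i t^{-q_i}\big)
= (y_\alpha t^{p_1})\,\Big[\big(\prod_{i=2}^m y_\alpha t^{p_i}\big)\big(\prod_{i=1}^n h_i t^{-q_i}\big)\Big].
\]
Applying the induction hypothesis to the bracketed product (with the running index shifted to $2,\ldots,m$) expresses it as a sum over nested tuples $(A_2,\ldots,A_m)$, $A_i\subseteq[n]\setminus\bigcup_{j=i+1}^m A_j$, of terms
\[
\big(\textstyle\prod_{i\in B}\langle\alpha,\,h_i\rangle\big)\big(\prod_{i\in[n]\setminus B} h_i t^{-q_i}\big)\big(\prod_{j=2}^m y_\alpha t^{p_j-\sum_{i\in A_j}q_i}\big),\qquad B:=\textstyle\bigcup_{j=2}^m A_j .
\]
The scalars float to the front, and since $y_\alpha t^{p_1}$ sits immediately to the left of the surviving factor $\prod_{i\in[n]\setminus B} h_i t^{-q_i}$, I may apply Lemma~\ref{lem1} a second time to commute $y_\alpha t^{p_1}$ through precisely those surviving factors. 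This yields a further sum over $A_1\subseteq[n]\setminus B$, contributing $\prod_{i\in A_1}\langle\alpha,\,h_i\rangle$, deleting the factors indexed by $A_1$, and replacing $y_\alpha t^{p_1}$ by $y_\alpha t^{p_1-\sum_{i\in A_1}q_i}$ placed directly in front of the blocks $y_\alpha t^{p_j-\cdots}$ ($j\ge 2$); no reordering of the $y_\alpha$-factors is needed, as they concatenate into $\prod_{j=1}^m y_\alpha t^{p_j-\sum_{i\in A_j}q_i}$ in the stated order.

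It then remains to recognize the combined index set. Because $A_1\subseteq[n]\setminus B$, the enlarged tuple $(A_1,A_2,\ldots,A_m)$ again satisfies $A_i\subseteq[n]\setminus\bigcup_{j=i+1}^m A_j$ for every $1\le i\le m$ (the cases $i\ge 2$ from the induction hypothesis, the case $i=1$ from the constraint just imposed), and conversely each such tuple arises exactly once. Since $A_1$ and $B$ are disjoint, the total scalar is $\prod_{i\in\bigcup_{j=1}^m A_j}\langle\alpha,\,h_i\rangle$, the surviving $h$-factors run over $[n]\setminus\bigcup_{j=1}^m A_j$, and the $y_\alpha$-block is $\prod_{j=1}^m y_\alpha t^{p_j-\sum_{i\in A_j}q_i}$; this is exactly the right-hand side, the cardinality bound $0\le k_i\le n-\sum_{j>i}k_j$ being the numerical shadow of the nesting condition.

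I do not expect a genuine obstacle here: the argument is a clean two-step commutation (induction hypothesis, then one more pass of Lemma~\ref{lem1}). The only point deserving care is the bookkeeping that the set $A_1$ produced by the final application of Lemma~\ref{lem1} ranges over $[n]\setminus B$ rather than all of $[n]$ --- equivalently, that a factor already absorbed by some later $y_\alpha t^{p_j}$ is no longer available for $y_\alpha t^{p_1}$ to absorb --- since it is exactly this that enforces disjointness of the $A_j$ and produces the nested index set. Verifying that the order of the $h$- and $y_\alpha$-blocks is preserved through both applications is the other routine check.
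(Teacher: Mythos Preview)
Your proof is correct and follows essentially the same inductive strategy as the paper: induct on $m$, with Lemma~\ref{lem1} as both the base case and the engine of the step. The only cosmetic difference is that the paper peels off the \emph{rightmost} factor $y_\alpha t^{p_m}$ (applying Lemma~\ref{lem1} first and then the induction hypothesis to $\prod_{i=1}^{m-1} y_\alpha t^{p_i}\cdot\prod_{i\in[n]\setminus A_m} h_i t^{-q_i}$), whereas you peel off the leftmost factor $y_\alpha t^{p_1}$ (induction hypothesis first, then one more pass of Lemma~\ref{lem1}); the nested index set $A_i\subseteq[n]\setminus\bigcup_{j>i}A_j$ falls out identically either way.
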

\begin{proof}
 Proceed by induction on $m$. In case $m= 1$, we have the result from Lemma \ref{lem1}. Now suppose that $m\geq 2$. Using Lemma \ref{lem1}, we have
 \begin{equation}\label{pflem2e1}
  \begin{split}
  &\big(\prod_{i=1}^{m} y_\alpha  t^{p_i}\big)\,\big(\prod_{i=1}^n h_i  t^{-q_i}\big)\\
 &=\big(\prod_{i=1}^{m-1} y_\alpha  t^{p_i}\big)\sum_{0\leq k_m\leq n}\,\sum_{\substack{A_m\subseteq[n]\\|A_m|=k_m}}\big(\prod_{i\in A_m}\langle\alpha,\, h_i\rangle\big)\,\big(\prod_{i\in[n]\setminus A_m} h_i  t^{-q_i}\big)
\,\big(y_\alpha   t^{p_m-\sum_{i\in A_m} q_i}\big)\big).\end{split}\end{equation}                                                                                 
 Using the induction hypothesis,  we have
\begin{equation}\label{pflem2e2} \begin{split}&\big(\prod_{i=1}^{m-1} y_\alpha  t^{p_i}\big)\,\big(\prod_{i\in[n]\setminus A_m} h_i  t^{-q_i}\big)\\
&=\sum_{\substack{ k_1, \ldots, k_{m-1}\\0\leq k_i\leq n-\sum_{j=i+1}^{m}k_j}}\,\sum_{\substack{A_1, \ldots,  A_{m-1}
\\A_i\subseteq [n]\setminus \cup_{j=i+1}^m A_j\\|A_i|=k_i}}\big(\prod_{i\in \cup_{j=1}^{m-1} A_j}\langle\alpha,\, h_i\rangle\big)\,
\big(\prod_{i\in[n]\setminus \cup_{j=1}^m A_j} h_i  t^{-q_i}\big)
\,\big(\prod_{j=1}^{m-1} y_\alpha   t^{p_j-\sum_{i\in A_j} q_i}\big).\end{split}\end{equation}
Substituting \eqref{pflem2e2} in the right hand side of \eqref{pflem2e1}, we get the result.
\end{proof}
\subsection{}
The following result follows from \cite[Theorem 10 and Proposition 13 (1)]{RRV1}.
\begin{theorem}\cite{RRV1}\label{stabsl2}Let $\alpha\in R^+$. Let $d\in\mathbb{Z}_{\geq0}$ and $\underline{\pi}$ be a partition such that $d\geq |{\underline{\pi}}|$. Then
$$x^-_{\alpha}(d,d,\piseq)\,T_{d\alpha}\,v_{\Lambda_0}=(-1)^{[\frac{d}{2}]} f_{\underline{\pi}}\,v_{\Lambda_0},$$
where $f_{\underline{\pi}}$
is a polynomial in $\alpha^\vee t^{-j}, j\in\mathbb{N}$, depends only on ${\underline{\pi}}$ and not on $d$ such that the weight of $f_{\underline{\pi}}\,v_{\Lambda_0}$ 
in  $L(\Lambda_0)$ is  $\Lambda_0-|{\underline{\pi}}|\delta$.
 \end{theorem}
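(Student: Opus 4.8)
The plan is to deduce the statement from its $\mathfrak{sl}_2$ counterpart \cite[Theorem 10 and Proposition 13 (1)]{RRV1} by restricting the entire computation to the affine $\mathfrak{sl}_2$ attached to the fixed root $\alpha$. Set $\mathfrak{s}_\alpha:=\complex x^+_\alpha\oplus\complex\alpha^\vee\oplus\complex x^-_\alpha\subset\lieg$, the $\mathfrak{sl}_2$-triple attached to $\alpha$, and let $\sltwohat^{(\alpha)}:=\mathfrak{s}_\alpha\otimes\complex[t,t^{-1}]\oplus\complex c$. Since $\alpha\in R^+$ satisfies $\bform{\alpha}{\alpha}=2$ and $\bform{x^+_\alpha}{x^-_\alpha}=1$, the bracket $[x^+_\alpha\otimes t^m,\,x^-_\alpha\otimes t^n]=\alpha^\vee\otimes t^{m+n}+m\delta_{m,-n}c$ shows that $\sltwohat^{(\alpha)}$ is a subalgebra of $\afsl$ isomorphic to $\sltwohat$, sharing the same central element $c$; in particular the level is preserved.

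First I would check that $v_{\Lambda_0}$ is a highest weight vector for $\sltwohat^{(\alpha)}$ generating a copy of the basic representation of $\sltwohat$. Because $\alpha+s\delta\ (s\geq0)$, $-\alpha+s\delta\ (s\geq1)$ and $s\delta\ (s\geq1)$ are positive roots of $\afsl$, the vector $v_{\Lambda_0}$ is annihilated by $x^+_\alpha\otimes t^s\ (s\geq0)$, by $x^-_\alpha\otimes t^s\ (s\geq1)$ and by $\alpha^\vee\otimes t^s\ (s\geq1)$, which together span the positive part of $\sltwohat^{(\alpha)}$. Its level is $\cform{\Lambda_0}{c}=1$ and its finite weight is $\cform{\Lambda_0}{\alpha^\vee}=0$, so $\mathbf{U}(\sltwohat^{(\alpha)})\,v_{\Lambda_0}$ is a level-one highest weight module with finite highest weight $0$. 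Since $L(\Lambda_0)$ is integrable, so is its restriction, whence this cyclic module is the irreducible basic representation of $\sltwohat$, with $v_{\Lambda_0}$ playing the role of its highest weight vector.

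Next I would show that both operators in the asserted identity act within this submodule and agree with their $\sltwohat$-avatars. The element $x^-_{\alpha}(d,d,\piseq)=\prod_{i=1}^{d}x^-_\alpha\otimes t^{d-\pi_i}$ lies in $\mathbf{U}(\sltwohat^{(\alpha)})$ by definition (each $d-\pi_i\geq0$ since $\piseq$ fits into $(d,d)$). For $T_{d\alpha}$, recall from Proposition~\ref{fkprop} that it is built from the reflection operators $r_\gamma$ with $\gamma\in\{\pm\alpha+s\delta\}$, each an exponential of elements $x^\pm_\alpha\otimes t^s\in\sltwohat^{(\alpha)}$; these exponentials are well defined on the integrable module $L(\Lambda_0)$, preserve the submodule $\mathbf{U}(\sltwohat^{(\alpha)})\,v_{\Lambda_0}$, and restrict there to the corresponding reflection operators of $\sltwohat$. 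Consequently $T_{d\alpha}$ preserves the submodule and restricts to the Frenkel--Kac translation operator of $\sltwohat$ on its basic representation. With this identification, $x^-_{\alpha}(d,d,\piseq)\,T_{d\alpha}\,v_{\Lambda_0}$ is computed entirely inside the basic representation of $\sltwohat$, where \cite[Theorem 10 and Proposition 13 (1)]{RRV1} applies verbatim with $\alpha$ in place of the positive root of $\mathfrak{sl}_2$. This yields the sign $(-1)^{[\frac{d}{2}]}$ and the vector $f_\piseq\,v_{\Lambda_0}$, with $f_\piseq$ a polynomial in $\alpha^\vee t^{-j}\ (j\in\mathbb{N})$ depending only on $\piseq$; the weight assertion $\Lambda_0-|\piseq|\delta$ is a quick consistency check using $t_{d\alpha}(\Lambda_0)=\Lambda_0+d\alpha-d^2\delta$ together with $\sum_{i=1}^d(d-\pi_i)=d^2-|\piseq|$.

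The main obstacle I anticipate is the clean matching of the ambient Frenkel--Kac operator with the $\sltwohat$-operator under restriction, including the $2$-cocycle sign. One must verify that the reflection operators $r_\gamma$ and the cocycle $\epsilon$ of \cite{FK}, when restricted along the single root line $\integers\alpha$, reproduce exactly the $\sltwohat$ data, so that the accumulated sign is precisely $(-1)^{[\frac{d}{2}]}$ and not a competing normalization. Once this compatibility is pinned down, the $\mathfrak{sl}_2$ result of \cite{RRV1} closes the argument.
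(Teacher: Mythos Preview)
Your proposal is correct and matches the paper's approach: the paper does not give a separate proof but simply records that the result ``follows from \cite[Theorem 10 and Proposition 13 (1)]{RRV1}'', and your restriction-to-$\sltwohat^{(\alpha)}$ argument is precisely the natural way to make that deduction explicit. The cocycle/sign compatibility you flag is a genuine detail to check, but it is routine since $T_{d\alpha}$ is built entirely from reflections $r_\gamma$ with $\gamma\in\{\pm\alpha+s\delta\}$, all of which lie in $\sltwohat^{(\alpha)}$.
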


\begin{proposition}\label{crucprop} Let $d, d^\prime,m\in\mathbb{Z}_{\geq0},$ $\alpha\in R^+$, and $\underline{\pi}$ be a partition.  Let $\lambda\in P^+$ and $\beta\in Q$ with $(\lambda-\beta|\alpha)=d+d^\prime$, and set $\mu=\lambda-\beta$.
Let $\f_m$ be a polynomial in $\var,$  such that the weight of $\f_m\,v_{\Lambda_0}$ in $L(\Lambda_0)$ is $\Lambda_0-m\delta$. Then
\be
\item\label{pone:wt} the weight of $x^-_{\alpha}(d,d^\prime,\piseq)\,T_{\mu}\,\f_m\,v_{\Lambda_0}$ in $L(\Lambda_{i_\lambda})$ is $t_{\mu-d\alpha}\,(\Lambda_0-(|\underline{\pi}|+m)\delta)$. 
\item \label{ptwo:stab}
If $d\geq |\underline{\pi}|+m$, we have
 $$x^-_{\alpha}(d,d^\prime,\piseq)\,T_{\mu}\,\f_m\,v_{\Lambda_0}=(-1)^{[\frac{d}{2}]}\,\epsilon(\mu-d\alpha,\,d\alpha)\, T_{\mu-d\alpha}\,f_{\underline{\pi}, \f_m}\,v_{\Lambda_0},$$
 where $f_{\underline{\pi}, \f_m}$ is a polynomial in $\var,$ depends only on $\underline{\pi}$, $\f_m$ and not on $d, d^\prime$, such that 
 the weight of $f_{\underline{\pi}, \f_m}\,v_{\Lambda_0}$  in $L(\Lambda_{i_\lambda})$ is $\Lambda_0-(|\underline{\pi}|+m)\delta$.
\ee
 \end{proposition}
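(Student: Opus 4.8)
Part (1) is a weight computation, and I would dispatch it first. Writing $x^-_\alpha(d,d',\piseq)=\prod_{i=1}^d(x^-_\alpha\otimes t^{d'-\pi_i})$ and recalling that $x^-_\alpha\otimes t^s$ has weight $-\alpha+s\delta$, the monomial $x^-_\alpha(d,d',\piseq)$ has weight $-d\alpha+(dd'-|\piseq|)\delta$. Since $T_\mu$ carries $L(\Lambda_0)_\nu$ to $L(\Lambda_{i_\lambda})_{t_\mu(\nu)}$ and $\f_m\,v_{\Lambda_0}$ has weight $\Lambda_0-m\delta$, the vector $x^-_\alpha(d,d',\piseq)\,T_\mu\,\f_m\,v_{\Lambda_0}$ has weight $t_\mu(\Lambda_0-m\delta)-d\alpha+(dd'-|\piseq|)\delta$. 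Expanding $t_\mu$ and $t_{\mu-d\alpha}$ with the translation formula, using $(\mu|\alpha)=d+d'$ and $(\alpha|\alpha)=2$ (so that $(\mu-d\alpha|\mu-d\alpha)=(\mu|\mu)-2dd'$) and that $\delta$ is fixed by all translations, a short calculation shows this weight equals $t_{\mu-d\alpha}(\Lambda_0-(|\piseq|+m)\delta)$, which is (1).

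For part (2), the plan is to reduce to Theorem~\ref{stabsl2} (the $\mathfrak{sl}_2$-type stability statement of \cite{RRV1}). First I would apply Proposition~\ref{p:tlambdambeta}~(2) once for each of the $d$ factors of $x^-_\alpha(d,d',\piseq)$ to carry $T_\mu$ all the way to the left; since $(\mu|\alpha)=d+d'$, the exponent $d'-\pi_i$ is replaced by $-\pi_i-d$, giving
\[
 x^-_\alpha(d,d',\piseq)\,T_\mu\,\f_m\,v_{\Lambda_0}=T_\mu\,\Big(\prod_{i=1}^d(x^-_\alpha\otimes t^{-\pi_i-d})\Big)\,\f_m\,v_{\Lambda_0}.
\]
It then suffices to establish, inside $L(\Lambda_0)$, the identity $\big(\prod_{i=1}^d(x^-_\alpha\otimes t^{-\pi_i-d})\big)\,\f_m\,v_{\Lambda_0}=(-1)^{[\frac{d}{2}]}\,T_{-d\alpha}\,f_{\piseq,\f_m}\,v_{\Lambda_0}$ for a polynomial $f_{\piseq,\f_m}$ in $\var$ independent of $d,d'$: applying $T_\mu$, using $T_{-d\alpha}=T_{d\alpha}^{-1}$ (Proposition~\ref{fkprop}~(1)) and $T_\mu\,T_{-d\alpha}=\epsilon(\mu-d\alpha,\,d\alpha)\,T_{\mu-d\alpha}$ (from Proposition~\ref{p:tlambdambeta}~(1), as $\epsilon=\epsilon^{-1}$) then yields (2), with the weight assertion supplied by part (1).

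To prove that core identity in $L(\Lambda_0)$, I would expand $\f_m$ as a sum of monomials $\prod_l(\alpha^\vee_{i_l}\otimes t^{-q_l})$ with $\sum_l q_l=m$ (the remaining $\delta$-homogeneous components of $\f_m$ annihilate $v_{\Lambda_0}$) and handle one monomial at a time. Lemma~\ref{lem2} — whose proof only uses $[x^-_\alpha\otimes t^p,\,h\otimes t^{-q}]=\langle\alpha,h\rangle\,x^-_\alpha\otimes t^{p-q}$ and is therefore valid for negative $t$-powers — commutes the Cartan factors to the left of $\prod_i(x^-_\alpha\otimes t^{-\pi_i-d})$, producing a sum of terms, each consisting of an integer scalar $\prod\langle\alpha,\alpha^\vee_{i_l}\rangle$, a residual product $\prod_{l\notin A}(\alpha^\vee_{i_l}\otimes t^{-q_l})$ (where $A=\bigcup_j A_j$), and a product $\prod_{j=1}^d(x^-_\alpha\otimes t^{-\pi_j-d-\sigma_j})$, with $\sigma_j\ge0$ the sum of the $q_l$ whose index lies in the selected set $A_j$. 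Setting $\underline{\nu}$ to be the partition with parts $\pi_j+\sigma_j$, and noting that the $x^-_\alpha$-factors pairwise commute and $|\underline{\nu}|=|\piseq|+\sum_{l\in A}q_l\le|\piseq|+m\le d$, Proposition~\ref{p:tlambdambeta}~(2) and Theorem~\ref{stabsl2} give
\[
 \Big(\prod_{j=1}^d(x^-_\alpha\otimes t^{-\pi_j-d-\sigma_j})\Big)\,v_{\Lambda_0}=(-1)^{[\frac{d}{2}]}\,T_{-d\alpha}\,f_{\underline{\nu}}\,v_{\Lambda_0},
\]
with $f_{\underline{\nu}}$ a polynomial in $\var$ depending only on the partition $\underline{\nu}$. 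I would then pull the residual Cartan factors through $T_{-d\alpha}$ by Proposition~\ref{fkprop}~(5) (legitimate since $-q_l\ne0$), sum over the monomials of $\f_m$ and over the Lemma~\ref{lem2} terms, and define $f_{\piseq,\f_m}$ accordingly; each summand is $\big(\prod\langle\alpha,\alpha^\vee_{i_l}\rangle\big)\big(\prod_{l\notin A}\alpha^\vee_{i_l}\otimes t^{-q_l}\big)\,f_{\underline{\nu}}$ and has $t$-degree $\sum_{l\notin A}q_l+|\underline{\nu}|=|\piseq|+m$.

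The hard part will be confirming that $f_{\piseq,\f_m}$ genuinely does not depend on $d$ or $d'$. The $d'$-independence is automatic once $T_\mu$ has been moved left (the exponents become $-\pi_i-d$), but the $d$-independence rests on three points that need care: that $f_{\underline{\nu}}$ in Theorem~\ref{stabsl2} depends only on the partition $\underline{\nu}$, so that the extra zero parts of $\underline{\nu}$ appearing when $d$ grows do no harm; that the bound $|\underline{\nu}|\le|\piseq|+m$ holds simultaneously for every term produced by Lemma~\ref{lem2}, so that the single hypothesis $d\ge|\piseq|+m$ suffices for all of them; and that the scalars and residual Cartan factors feeding into $f_{\piseq,\f_m}$ are determined by $\piseq$ and $\f_m$ alone. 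Once these are in hand, what remains is routine bookkeeping.
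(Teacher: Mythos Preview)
Your proposal is correct and follows essentially the same route as the paper. The only cosmetic difference is the order of the translation-operator manipulations: the paper factors $T_\mu=\epsilon(\mu-d\alpha,d\alpha)\,T_{\mu-d\alpha}T_{d\alpha}$ at the outset, conjugates by $T_{\mu-d\alpha}$ (obtaining exponents $d-\pi_i$ with $T_{d\alpha}$ already in position), and then commutes $T_{d\alpha}$ past $\f_m$ before invoking Lemma~\ref{lem2} and Theorem~\ref{stabsl2}, whereas you conjugate by the full $T_\mu$ first and reinsert $T_{-d\alpha}T_{d\alpha}$ afterwards; the key ingredients---Propositions~\ref{fkprop} and~\ref{p:tlambdambeta}, Lemma~\ref{lem2}, and Theorem~\ref{stabsl2}---and the check that $f_{\piseq,\f_m}$ depends only on $\piseq$ and $\f_m$ are identical.
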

\begin{proof}
Since the weight of $T_{\mu}\,\f_m\,v_{\Lambda_0}$ is $t_\mu(\Lambda_0-m\delta)$ and $(\mu|\alpha)=d+d'$, we have the weight of
$x^-_{\alpha}(d,d^\prime,\piseq)\,T_{\mu}\,\f_m\,v_{\Lambda_0}$ is
$
t_\mu(\Lambda_0-m\delta)-d\alpha+(dd^\prime-|\piseq|)\delta
=t_{\mu-d\alpha}\,(\Lambda_0-(|\underline{\pi}|+m)\delta).
$ Hence part~\eqref{pone:wt}.
We now prove part~\eqref{ptwo:stab}. Using Propositions~\ref{fkprop} and  \ref{p:tlambdambeta}, we have 
\beq\begin{split}
     \big(\prod_{i=1}^{d}   x^-_\alpha\otimes t^{d^\prime-\pi_i}\big)\,T_{\mu}\,\f_m\,v_{\Lambda_0}
     &=\epsilon(\mu-d\alpha,\,d\alpha)\,T_{\mu-d\alpha}\big(\prod_{i=1}^{d}T_{-(\mu-d\alpha)} \, (x^-_\alpha\otimes t^{d^\prime-\pi_i})\,T_{\mu-d\alpha}\big)\,T_{d\alpha}\,\f_m\,v_{\Lambda_0}\\
     &=\epsilon(\mu-d\alpha,\,d\alpha)\,T_{\mu-d\alpha} \,\big(\prod_{i=1}^{d}   x^-_\alpha\otimes t^{d-\pi_i}\big)\,\f_m\,T_{d\alpha}\,v_{\Lambda_0}
    \end{split}
\eeq
 Using Lemma \ref{lem2},  the right hand side of the last equation becomes
$$\epsilon(\mu-d\alpha,\,d\alpha)\,T_{\mu-d\alpha} \,\sum_{q}f_{\f_m}^q\,\big(\prod_{i=1}^{d}   x^-_\alpha\otimes t^{d-\pi_i-\eta_{i,q}}\big)\,T_{d\alpha}\,v_{\Lambda_0},$$ 
for some polynomials $f_{\f_m}^q$ in $\var,$ and positive integers ${\eta_{i,q}}$, $1\leq i\leq d$, depend on $\f_m$ such that $m\geq \sum_{i=1}^d \eta_{i,q}$.
Part~\eqref{ptwo:stab} now follows from Theorem \ref{stabsl2} and part~ \eqref{pone:wt}.
\end{proof}

\begin{proposition}\label{pf:prop}
 Let  $\lambda\in P^+$,  $\pop\in\popset_\lambda,$  $k, m\in\mathbb{Z}_{\geq0}$, and $s\in I.$ Set $$\mu= {\lambda+k\alpha_{1,s}-\sum_{s< i\leq j\leq r}d_{i,j}\alpha_{i,j}}.$$
 Let $\f_m$ be a polynomial in $\var,$  such that the weight of $\f_m\,v_{\Lambda_0}$ in $L(\Lambda_0)$ is $\Lambda_0-m\delta$. Then for every $s< \q\leq r+1$, we have  
 \beq\begin{split}&\big(\prod_{j=\q}^r x_{s, j}^-(\dsj,\,\dprimesj+\delta_{1,s}k, \,\pijsseq)\big)
 \,T_{\mu}\,
 \f_m\,v_{\Lambda_0}\\&=\big(\prod_{j=\q}^r \epsilon(\mu-\sum_{u=j}^r d_{s,u}\alpha_{s,u},\, \dsj\alpha_{s,j})\big)\,
T_{\mu-
\sum_{j=\q}^r\dsj\,\alpha_{s,j}}\, f^{\q}_{g_m}\,v_{\Lambda_0},\end{split}\eeq
 where  $f^{\q}_{g_m}$ is a polynomial in $\var$,
 depends only  on $\f_m$ and the elements from the sets $\mathcal{I}^j_s(\pop), \,q\leq j\leq r$,
 such that the weight of $f^{\q}_{g_m}\,v_{\Lambda_0}$ in $L(\Lambda_0)$ is $\Lambda_0-\big(m+\sum_{j=\q}^r{d^j_s(\pop)}\big)\delta$.
\end{proposition}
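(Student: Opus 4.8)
The plan is a downward induction on $\q$, from $\q=r+1$ to $\q=s+1$. For $\q=r+1$ the product on the left is empty and $\mu-\sum_{j=r+1}^r\dsj\alpha_{s,j}=\mu$, so one takes $f^{r+1}_{g_m}:=\f_m$ and the assertion is immediate. For the inductive step, split off the leftmost factor $x_{s,\q}^-(\dsq,\dprimesq+\delta_{1,s}k,\pisqseq)$ (the $x_{s,j}^-$ commute among themselves) and apply the induction hypothesis to the remaining block $\prod_{j=\q+1}^r x_{s,j}^-(\cdots)$: it converts $\prod_{j=\q+1}^r x_{s,j}^-(\cdots)\,T_\mu\,\f_m\,v_{\Lambda_0}$ into $\big(\prod_{j=\q+1}^r\epsilon(\cdots)\big)\,T_{\mu_1}\,f^{\q+1}_{g_m}\,v_{\Lambda_0}$, where $\mu_1:=\mu-\sum_{j=\q+1}^r\dsj\alpha_{s,j}$, the vector $f^{\q+1}_{g_m}v_{\Lambda_0}$ has weight $\Lambda_0-\big(m+\sum_{j=\q+1}^r d^j_s(\pop)\big)\delta$, and $f^{\q+1}_{g_m}$ depends only on $\f_m$ and on $\mathcal{I}^j_s(\pop)$ for $\q<j\le r$. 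It then remains to apply $x_{s,\q}^-(\dsq,\dprimesq+\delta_{1,s}k,\pisqseq)$ to $T_{\mu_1}f^{\q+1}_{g_m}v_{\Lambda_0}$ and to identify the outcome with $\epsilon(\mu_*,\dsq\alpha_{s,\q})\,T_{\mu_*}\,f^\q_{g_m}\,v_{\Lambda_0}$, where $\mu_*:=\mu_1-\dsq\alpha_{s,\q}$.

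The technical heart is the weight identity
\[
\bform{\mu_1}{\alpha_{s,\q}}=\dsq+\sum_{p=s}^{\q}d'_{p,\q}+\delta_{1,s}k ,
\]
which I would prove by expanding $\mu_1$ and using $\bform{\alpha_{1,s}}{\alpha_{s,\q}}=\delta_{1,s}$ together with the Gelfand--Tsetlin relations $\dsj=\lambda_s^{j+1}-\lambda_s^j$ and $d'_{p,\q}=\lambda_p^{\q}-\lambda_{p+1}^{\q+1}$, the off-diagonal sums telescoping. Put $N:=\sum_{p=s+1}^{\q}d'_{p,\q}$ and let $\underline{\rho}$ be the partition obtained from $\pisqseq$ by adding $N$ to each part; it still fits the rectangle $(\dsq,\ \sum_{p=s}^{\q}d'_{p,\q}+\delta_{1,s}k)$, since the parts of $\pisqseq$ are at most $\dprimesq$, and
\[
x_{s,\q}^-(\dsq,\dprimesq+\delta_{1,s}k,\pisqseq)=x_{s,\q}^-\big(\dsq,\ \textstyle\sum_{p=s}^{\q}d'_{p,\q}+\delta_{1,s}k,\ \underline{\rho}\big),
\]
whose second parameter equals $\bform{\mu_1}{\alpha_{s,\q}}-\dsq$. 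This is exactly the numerical relation required by Proposition \ref{crucprop}, and the Frenkel--Kac manipulation in its proof now applies: write $T_{\mu_1}=\epsilon(\mu_*,\dsq\alpha_{s,\q})\,T_{\mu_*}\,T_{\dsq\alpha_{s,\q}}$ (Proposition \ref{p:tlambdambeta}(1)), slide $T_{\dsq\alpha_{s,\q}}$ past $f^{\q+1}_{g_m}$ (Proposition \ref{fkprop}(5)), and conjugate the $x_{\alpha_{s,\q}}^-$--factors through $T_{\mu_*}$ (Proposition \ref{p:tlambdambeta}(2)); each such factor acquires the exponent shift $-\bform{\mu_*}{\alpha_{s,\q}}=2\dsq-\bform{\mu_1}{\alpha_{s,\q}}$, so the block becomes $x_{s,\q}^-(\dsq,\dsq,\underline{\rho})$ acting in $L(\Lambda_0)$. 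Thus the problem reduces to evaluating $x_{s,\q}^-(\dsq,\dsq,\underline{\rho})\,f^{\q+1}_{g_m}\,T_{\dsq\alpha_{s,\q}}\,v_{\Lambda_0}$ inside $L(\Lambda_0)$, modulo the outer factors $\epsilon(\mu_*,\dsq\alpha_{s,\q})\,T_{\mu_*}$.

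Finally, move $x_{s,\q}^-(\dsq,\dsq,\underline{\rho})$ to the right of $f^{\q+1}_{g_m}$ by Lemma \ref{lem2}: this yields a finite sum $\sum_w\hat f_w\,x_{s,\q}^-(\dsq,\dsq,\underline{\rho}+\underline{\zeta}_w)\,T_{\dsq\alpha_{s,\q}}\,v_{\Lambda_0}$, with each $\hat f_w$ a polynomial in $\var$ and with $|\underline{\zeta}_w|\le m+\sum_{j=\q+1}^r d^j_s(\pop)$. Each vector $x_{s,\q}^-(\dsq,\dsq,\underline{\rho}+\underline{\zeta}_w)\,T_{\dsq\alpha_{s,\q}}\,v_{\Lambda_0}$ has weight $\Lambda_0-(|\pisqseq|+\dsq N+|\underline{\zeta}_w|)\delta$, hence by Theorem \ref{weights} is a polynomial in $\var$ applied to $v_{\Lambda_0}$, depending only on $\dsq$, $\pisqseq$, $N$ and $\underline{\zeta}_w$. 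Setting $f^\q_{g_m}$ equal to the resulting polynomial: since $\dsq$, $\pisqseq$ and the $d'_{p,\q}$ $(s<p\le\q)$ all lie in $\mathcal{I}^\q_s(\pop)$, it depends only on $\f_m$ and $\bigcup_{j=\q}^r\mathcal{I}^j_s(\pop)$; a weight count using $d^\q_s(\pop)=\dsq N+|\pisqseq|$ shows $f^\q_{g_m}v_{\Lambda_0}$ has weight $\Lambda_0-\big(m+\sum_{j=\q}^r d^j_s(\pop)\big)\delta$; and the sign that accumulates is $\prod_{j=\q}^r\epsilon(\mu-\sum_{u=j}^r d_{s,u}\alpha_{s,u},\,\dsj\alpha_{s,j})$, as asserted. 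Observe that, in contrast with Proposition \ref{crucprop}, no inequality such as $\dsq\ge|\pisqseq|$ is required here, because $f^\q_{g_m}$ is allowed to depend on $\dsq$; correspondingly this last step invokes Theorem \ref{weights} rather than Theorem \ref{stabsl2}. The step I expect to be the main obstacle is establishing the weight identity and, with it, the rewriting of the Chari--Loktev monomial that makes its second parameter equal to $\bform{\mu_1}{\alpha_{s,\q}}-\dsq$; granting that alignment, everything else is the Frenkel--Kac bookkeeping already carried out in Proposition \ref{crucprop}.
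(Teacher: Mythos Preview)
Your proposal is correct and follows essentially the same route as the paper: downward induction on $q$ with base $q=r+1$, then peel off the factor $x_{s,q}^-(\dsq,\dprimesq+\delta_{1,s}k,\pisqseq)$, apply the induction hypothesis, and compute the key inner product with $\alpha_{s,q}$ to conjugate through the translation operator. The weight identity you single out as the main obstacle is exactly the computation the paper carries out (it works with $\nu=\mu_*$ rather than $\mu_1$, obtaining $(\nu\mid\alpha_{s,q})=\sum_{i=s}^{q}d'_{i,q}+\delta_{1,s}k-d_{s,q}$, which is equivalent to your identity for $\mu_1$).

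The only difference is in the very last step. After arriving at
\[
\epsilon(\mu_*,\dsq\alpha_{s,q})\,T_{\mu_*}\,\Big(\prod_{p}x^-_{s,q}\otimes t^{\,d_{s,q}-\pi(q)^s_p-N}\Big)\,T_{\dsq\alpha_{s,q}}\,f^{q+1}_{g_m}\,v_{\Lambda_0},
\]
the paper simply observes that the vector to the right of $T_{\mu_*}$ has weight $\Lambda_0-(m+\sum_{j=q}^r d^j_s(\pop))\delta$ and invokes Theorem~\ref{weights} directly to write it as $f^q_{g_m}\,v_{\Lambda_0}$; the dependence on $\mathcal{I}^q_s(\pop)$ and $f^{q+1}_{g_m}$ is then read off from the expression itself. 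Your detour through Lemma~\ref{lem2} (commuting the $x^-$--block past $f^{q+1}_{g_m}$ before applying Theorem~\ref{weights} term by term) is harmless and makes the dependence claim more explicit, but it is not needed here---in contrast with Proposition~\ref{crucprop}, there is no $d\ge|\underline{\pi}|+m$ hypothesis to verify, so no reduction to Theorem~\ref{stabsl2} is required and Theorem~\ref{weights} suffices in one stroke.
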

\begin{proof}
 Proceed by induction on $\q$. In the case $\q=r+1$, by taking $f^{r+1}_{g_m}=\f_m$, both sides are equal to 
 $T_{\mu}\,
 \f_m\,v_{\Lambda_0}.$
 Now suppose that $\q\leq r$. By the induction hypothesis, we have
 \beq\label{pflemmaine1}\begin{split}&\big(\prod_{j=\q+1}^{r} x_{s, j}^-(\dsj,\dprimesj+\delta_{1,s}k, \pijsseq)\big)\,T_{\mu}\,
 \f_m\,v_{\Lambda_0}\\&=\big(\prod_{j=\q+1}^r \epsilon(\mu-\sum_{u=j}^r d_{s,u}\alpha_{s,u},\, \dsj\alpha_{s,j})\big)\,
T_{\mu-
\sum_{j=\q+1}^r\dsj\alpha_{s,j}}\, f^{\q+1}_{g_m}\,v_{\Lambda_0},\end{split}\eeq
where  $f^{\q+1}_{g_m}$ is a polynomial in $\var$,
 depends only  on $\f_m$ and the elements from the sets $\mathcal{I}^j_s(\pop), \,q< j\leq r$,
 such that the weight of $f^{\q+1}_{g_m}\,v_{\Lambda_0}$ in $L(\Lambda_0)$ is $\Lambda_0-\big(m+\sum_{j=\q+1}^r{d^j_s(\pop)}\big)\delta$.
 
 Acting both sides of \eqref{pflemmaine1} with $x_{s, \q}^-(\dsq,\,\dprimesq+\delta_{1,s}k, \,\pisqseq)$, we get
 \beq\label{pflemmaine2}\begin{split}&\big(\prod_{j=\q+1}^r \epsilon(\mu-\sum_{u=j}^r d_{s,u}\alpha_{s,u},\, \dsj\alpha_{s,j})\big)\,\big(\prod_{j=\q}^r x_{s, j}^-(\dsj,\,\dprimesj+\delta_{1,s}k,\, \pijsseq)\big)
 \,T_{\mu}\,
 \f_m\,v_{\Lambda_0}\\&=x_{s, \q}^-(\dsq,\,\dprimesq+\delta_{1,s}k,\, \pisqseq)\,T_{\mu-
\sum_{j=\q+1}^r\dsj\alpha_{s,j}}\, f^{\q+1}_{g_m}\,v_{\Lambda_0}.\end{split}\eeq
  Set $\nu= {\mu-
\sum_{j=\q}^r\dsj\alpha_{s,j}}$. 
We observe that
\begin{equation}\label{e:muofalphasq}
 \begin{split}
(\nu|\alpha_{s,\q})&=
(\lambda^{r+1}_s-\lambda^{r+1}_{\q+1})+\delta_{1,s}k+\sum_{j=\q+1}^r d_{\q+1,j}-\sum_{i=s+1}^{\q}d_{i,\q}-\sum_{j=\q}^r d_{s,j}-d_{s,\q}\\
&=(\lambda^{r+1}_s-\lambda^{r+1}_{\q+1})+\delta_{1,s}k+(\lambda^{r+1}_{\q+1}-\lambda^{\q+1}_{\q+1})-(\lambda^{\q}_{s}-\sum_{i=s}^{\q}
d^\prime_{i,\q}-\lambda^{\q+1}_{\q+1})-(\lambda^{r+1}_s-\lambda^{\q}_s)-d_{s,\q}\\
&=\sum_{i=s}^{\q} d^\prime_{i,\q}+\delta_{1,s}k-d_{s,\q}.
\end{split}\end{equation}  
Using Proposition \ref{p:tlambdambeta} and \eqref{e:muofalphasq}, the right hand side of \eqref{pflemmaine2} becomes
\beq\label{pflemmaine3}\begin{split}
    &\epsilon(\nu,\,\dsq\alpha_{s,q})\,T_\nu\,\big(\prod_{p=0}^{\dsq}T_{-\nu}\, \big(x^-_{s, \q}\otimes t^{d^\prime_{s, \q}+\delta_{1,s}k-\pi(\q)^{s}_p}\big)\,T_\nu \big)\,T_{\dsq \alpha_{s, \q}}\,f^{\q+1}_{g_m}\,v_{\Lambda_0}\\
 &=\epsilon(\nu,\,\dsq\alpha_{s,q})\,T_\nu\,\big(\prod_{p=0}^{d_{s,\q}}\big(x^-_{s, \q}\otimes t^{d_{s, \q}-\pi(\q)^{s}_p-\sum_{i=s+1}^{\q} d^\prime_{i,\q}}\big)\big)\,T_{d_{s, \q} \alpha_{s, \q}}\,f^{\q+1}_{g_m}\,v_{\Lambda_0}.
\end{split}\eeq
From Theorem \ref{weights}, it is easy to see that
\beq\label{pflemmaine5}
\big(\prod_{p=0}^{d_{s,\q}}\big(x^-_{s, \q}\otimes t^{d_{s, \q}-\pi(\q)^{s}_p-\sum_{i=s+1}^{\q} d^\prime_{i,\q}}\big)\big)\,T_{d_{s, \q} \alpha_{s, \q}}\,f^{\q+1}_{g_m}\,v_{\Lambda_0}=f^{\q}_{g_m}\,v_{\Lambda_0},\eeq
where $f^{\q}_{g_m}$ is a polynomial in $\var$,
 depends only  on $f^{\q+1}_{g_m}$ and the elements from the set $\mathcal{I}^q_s(\pop)$ such that the weight of $f^{\q}_{g_m}\,v_{\Lambda_0}$ in $L(\Lambda_0)$ is 
 $$\Lambda_0-\big(m+\sum_{j=\q+1}^r{d^j_s(\pop)}+d_{s,q}^2-   d_{s,q}(d_{s,q}-\sum_{i=s+1}^{q}d^\prime_{i,q}) +|\pisqseq|\big)\delta=\Lambda_0-\big(m+\sum_{j=\q}^r{d^j_s(\pop)}\big)\delta.$$ 
 Substituting \eqref{pflemmaine5} into \eqref{pflemmaine3}, we get the result.
\end{proof}

\subsection{}\label{proof:MTp}{\em Proof of Theorem \ref{MTp}.}
 Proceed by induction on $s$. In the case $s=r+1$, by taking $f_{\mathcal{I}({\pop_{r+1}})}=1$, both sides of \eqref{e:sMTp} are equal to $T_{\lambda+k\theta}\,v_{\Lambda_0}$. 
 Now suppose that $s\leq r$. 
 Set 
 $$\mu= {\lambda+k\alpha_{1,s}-\sum_{s< i\leq j\leq r}d_{i,j}\alpha_{i,j}}.$$
 By the induction hypothesis, we have
\beq\label{pf:eq1}
\rhocl{\pop^k_{s+1}} T_{\lambda+k\theta}\,v_{\Lambda_0}=\epsilon_{\pop^k_{s+1}}\, T_{\mu}\,f_{\mathcal{I}(\pop_{s+1})}\,v_{\Lambda_0},\eeq
where  $f_{\mathcal{I}(\pop_{s+1})}$ is a polynomial in $\var,$
 depends only  on the elements from the set $\mathcal{I}(\pop_{s+1})$ such that the weight of  $f_{\mathcal{I}(\pop_{s+1})}\,v_{\Lambda_0}$ in $L(\Lambda_0)$ is $\Lambda_0-\depthpspone\delta$.
Since $$\rho_{\pop^k_s}=\prod_{j=s}^r x_{s, j}^-(\dsj+\delta_{s,j}k,\,\dprimesj+\delta_{1,s}k, \,\pijsseq)\,\rhocl{\pop^k_{s+1}},$$ we get from \eqref{pf:eq1} that
$$\rhocl{\pop^k_{s}} \,T_{\lambda+k\theta}\,v_{\Lambda_0}=\epsilon_{\pop^k_{s+1}}\, \big(\prod_{j=s}^r x_{s, j}^-(\dsj+\delta_{s,j}k,\,\dprimesj+\delta_{1,s}k,\, \pijsseq)\big)\,T_{\mu}\,f_{\mathcal{I}(\pop_{s+1})}\,v_{\Lambda_0}.$$
Now using Proposition \ref{pf:prop} with $q=s+1$, we get 
\begin{align}&\epsilon_{\pop^k_{s+1}}\, (\prod_{j=s+1}^r \epsilon(\mu-\sum_{u=j}^r d_{s,u}\alpha_{s,u},\, d_{s,j}\alpha_{s,j}))\,\rhocl{\pop^k_{s}}\, T_{\lambda+k\theta}\,v_{\Lambda_0}\nonumber\\ 
&=x_{s, s}^-(\dss+k,\,\dprimess+\delta_{1,s}k, \,\pissseq)\,
T_{\mu-\sum_{j=s+1}^r d_{s,j}\alpha_{s,j}}\, f_{s+1}\,v_{\Lambda_0},\label{e:pfmtp}\end{align}
 where  $f_{s+1}$ is a polynomial in $\var,$
 depends only  on the elements from the sets $\mathcal{I}(\pop_{s+1})$ and $\mathcal{I}^j_s(\pop), \,s< j\leq r$, 
 such that the weight of $f_{s+1}\,v_{\Lambda_0}$ in $L(\Lambda_0)$ is $\Lambda_0-\big(\depthpspone+\sum_{j=s+1}^r{d^j_s(\pop)}\big)\delta$. 
 Since
 \begin{equation*}
 \begin{split}
 &d'_{s,s}+\delta_{1,s}k-\pi(s)^s_i-(\mu-\sum_{j=s+1}^r d_{s,j}\alpha_{s,j}-(d_{s,s}+k)\alpha_{s,s}\,|\,\alpha_{s,s})\\
 &=d'_{s,s}+\delta_{1,s}k-\pi(s)^s_i-(\lambda^{r+1}_s-\lambda^{r+1}_{s+1}+k+\delta_{1,s}k+\sum_{j=s+1}^rd_{s+1,j}-\sum_{j=s+1}^rd_{s,j}-2(d_{s,s}+k))\\
 &=d'_{s,s}-\pi(s)^s_i-(\lambda^{r+1}_s-\lambda^{r+1}_{s+1}-k-d_{s,s}+
 \sum_{j=s+1}^rd_{s+1,j}-\sum_{j=s}^rd_{s,j})\\
 &=d_{s,s}+k-\pi(s)^s_i,
 \end{split}\end{equation*}
 for all $1\leq i\leq s$, and
 $$d_{s,s}\geq d(\pop_s)=|\pissseq|+\depthpspone+\sum_{j=s+1}^r{d^j_s(\pop)},$$
 we get the result from \eqref{e:pfmtp} by using Proposition \ref{crucprop} and \eqref{e:invsetofps}.

\addcontentsline{toc}{section}{References}

\end{document}